\documentclass[12pt]{article}

\usepackage{graphicx}
\usepackage{amsmath}
\usepackage{amssymb}
\usepackage{latexsym}
\usepackage{subfigure}
\usepackage{crop}
\usepackage{algorithmic, algorithm}
\usepackage{multirow}
\usepackage[section,subsection,subsubsection]{placeins}
\usepackage[colorlinks = true, pdfstartview = FitV, linkcolor = blue, citecolor = blue, urlcolor = blue]{hyperref}
\usepackage{bm}
\usepackage{bbm}
\usepackage{enumerate}
\usepackage{framed} 
\usepackage[framed]{ntheorem}
\usepackage{array}
\usepackage{paralist}

\setlength{\topmargin}{-.50in}
\setlength{\leftmargin}{0.0in}
\setlength{\evensidemargin}{-0.2in}
\setlength{\oddsidemargin}{-0.2in}
\setlength{\textheight}{8.5in}
\setlength{\textwidth}{6.5in}




\newcommand {\bgg}  { {\bf g} }

\newcommand {\xx}  { {\bf x} }
\renewcommand {\aa}  { {\bf a} }

\newcommand {\yy}  { {\bf y} }

\newcommand {\Ex} { {\mathbb E} }

\newcommand {\pp}  { {\bf p} }

\newcommand {\vv}  { {\bf v} }
\newcommand {\ww}  { {\bf w} }

\newcommand{\hf}{\frac12}




\newcommand{\defeq}{\mathrel{\mathop:}=}

\newcounter{comment}\setcounter{comment}{0}
\def\comment{\refstepcounter{comment}\textbf{Comment \arabic{comment}: }}

\theoremclass{Theorem}
\theoremstyle{break}
\newframedtheorem{theorem}{Theorem}
\newframedtheorem{corollary}{Corollary}
\newframedtheorem{lemma}{Lemma}

\newenvironment{proof}[1][Proof]{\begin{trivlist}
\item[\hskip \labelsep {\bfseries #1}]}{\end{trivlist}}

\newcommand{\qed}{\nobreak \ifvmode \relax \else
      \ifdim\lastskip<1.5em \hskip-\lastskip
      \hskip1.5em plus0em minus0.5em \fi \nobreak
      \vrule height0.75em width0.5em depth0.25em\fi}

\sloppy

\pagestyle{myheadings}
\thispagestyle{plain}

\begin{document}

\pagestyle{plain} 
\setcounter{page}{1}
\title{Sub-Sampled Newton Methods I: Globally Convergent Algorithms}
\author{Farbod Roosta-Khorasani\thanks{International Computer Science Institute, Berkeley, CA 94704
 and Department of Statistics, University of California at Berkeley, Berkeley, CA 94720. 
{\tt farbod/mmahoney@stat.berkeley.edu}.} \and Michael W. Mahoney\footnotemark[1]}
\maketitle
\begin{abstract}
Large scale optimization problems are ubiquitous in machine learning and data analysis and there is a plethora of algorithms for solving such problems. Many of these algorithms employ sub-sampling, as a way to either speed up the computations and/or to implicitly implement a form of statistical regularization. In this paper, we consider second-order iterative optimization algorithms, i.e., those that use Hessian as well as gradient information, and we provide bounds on the convergence of the variants of Newton's method that incorporate uniform sub-sampling as a means to estimate the gradient and/or Hessian.  Our bounds are non-asymptotic, i.e., they hold for finite number of data points in finite dimensions for finite number of iterations. In addition, they are quantitative and depend on the quantities related to the problem, i.e., the condition number. However, our algorithms are global and are guaranteed to converge from any initial iterate. 

Using random matrix concentration inequalities, one can sub-sample the Hessian in a way that the curvature information is preserved. Our first algorithm incorporates such sub-sampled Hessian while using the full gradient. We also give additional convergence results for when the sub-sampled Hessian is regularized by modifying its spectrum or ridge-type regularization. Next, in addition to Hessian sub-sampling, we also consider sub-sampling the gradient as a way to further reduce the computational complexity per iteration. We use approximate matrix multiplication results from randomized numerical linear algebra (RandNLA) to obtain the proper sampling strategy. In all these algorithms, computing the update boils down to solving a large scale linear system, which can be computationally expensive. As a remedy, for all of our algorithms, we also give global convergence results for the case of inexact updates where such linear system is solved only approximately. 

This paper has a more advanced companion paper~\cite{romassn2} in which we demonstrate that, by doing a finer-grained analysis, we can get problem-independent bounds for local convergence of these algorithms and explore tradeoffs to improve upon the basic results of the present paper.


\end{abstract}

\section{Introduction}
\label{sec:intro}
Large scale optimization problems arise frequently in machine learning and data analysis and there has been a great deal of effort to devise algorithms for efficiently solving such problems. Here, following many data-fitting applications, we consider the optimization problem of the form 
\begin{equation}
\min _{\xx \in  \mathbb{R}^{p}} F(\xx) = \frac{1}{n} \sum_{i=1}^{n} f_{i}(\xx),
\label{obj}
\end{equation}
where each $f_{i}: \mathbb{R}^{p} \rightarrow \mathbb{R}$ corresponds to an observation (or a measurement) which models the loss (or misfit) given a particular choice of the underlying parameter $\xx$. Examples of such optimization problems arise frequently in machine learning such as logistic regression, support vector machines, neural networks and graphical models. Many optimization algorithms have been developed to solve~\eqref{obj},~\cite{bertsekas1999nonlinear,nesterov2004introductory,boyd2004convex}. Here, we consider the high dimensional regime where both $p$ and $n$ are very large, i.e., $n, p \gg 1$. In such high dimensional settings, the mere evaluation of the gradient or the Hessian can be computationally prohibitive. As a result, many of the classical \textit{deterministic} optimization algorithms might prove to be inefficient, if applicable at all. In this light, there has been a great deal of effort to design \textit{stochastic} variants which are efficient and can solve the modern ``big data'' problems. Many of these algorithms employ sub-sampling as a way to speed up the computations. For example, a particularly simple version of~\eqref{obj} is when the $f_i$'s are quadratics, in which case one has very over-constrained least squares problem.  For these problems, randomized numerical linear algebra (RandNLA) has employed random sampling~\cite{mahoney2011randomized}, e.g., sampling  with respect to approximate leverage scores~\cite{drineas2006sampling,drineas2012fast}. Alternatively, on can perform a random projection followed by uniform sampling in the randomly rotated space~\cite{drineas2011faster}.  In these algorithms, sampling is used to get a data-aware or data-oblivious subspace embedding, i.e., an embedding which preserves the geometry of the entire subspace, and as such, one can get strong relative-error bounds of the solution.  Moreover, implementations of algorithms based on those ideas have been shown to beat state-of-the-art numerical routines~\cite{avron2010blendenpik,meng2014lsrn,yang2015implementing}.  For more general optimization problems of the form of~\eqref{obj}, optimization algorithms are common, and within the class of first order methods, i.e., those which only use gradient information, there are many corresponding results. However, within second order methods, i.e., the ones that use both the gradient and the Hessian information, one has yet to devise globally convergent algorithms with non-asymptotic convergence guarantees.  We do that here. In particular, we present sub-sampled ``Newton-type'' algorithms which are global and are guaranteed to converge from any initial iterate. Subsequently, we give convergence guarantees which are non-asymptotic, i.e., they hold for finite number of data points in finite dimensions for finite number of iterations. 

The rest of this paper is organized as follows: in Section~\ref{sec:general_background}, we first give a very brief background on the general methodology for optimizing~\eqref{obj}. The notation and the assumptions used in this paper are given in Section~\ref{sec:not_assmpts}. The contributions of this paper are listed in Section~\ref{sec:contributions}. Section~\ref{sec:related_work} surveys the related work. Section~\ref{sec:subsampl_newton_hess} gives global convergence results for the case where only the Hessian is sub-sampled while the full gradient is used. In particular, Section~\ref{sec:global_con_nm_exact} gives a linearly convergent global algorithm with exact update, whereas Section~\ref{sec:global_con_nm_inexact} gives a similar result for the case when approximate solution of the linear system is used as search direction. The case where the gradient, as well as Hessian, is sub-sampled is treated in Section~\ref{sec:subsampl_newton_hess_grad}.  More specifically, Section~\ref{sec:global_con_nm_exact_grad} gives a globally convergent algorithm with linear rate with exact update, whereas Section~\ref{sec:global_con_nm_inexact_grad} addresses the algorithm with inexact updates. A few examples from generalized linear models (GLM), a very popular class of problems in machine learning community, as well as numerical simulations are given in Section~\ref{sec:examples}. Conclusions and further thoughts are gathered in Section~\ref{sec:conclusion}. All proofs are given in the appendix.

\subsection{General Background}
\label{sec:general_background}
For optimizing~\eqref{obj}, the standard deterministic or full gradient method, which dates back to Cauchy~\cite{cauchy1847methode}, uses iterations of the form 
\begin{equation*}
\xx^{(k+1)} = \xx^{(k)} - \alpha_{k} \nabla F(\xx^{(k)}),
\end{equation*}
where $\alpha_{k}$ is the step size at iteration $k$. However, when $n \gg 1$, the full gradient method can be inefficient because its iteration cost scales linearly in $n$. In addition, when $p \gg 1 $ or when each individual $f_{i}$ are complicated functions (e.g., evaluating each $f_{i}$ may require the solution of a partial differential equation), the mere evaluation of the gradient can be computationally prohibitive.  Consequently, stochastic variant of full gradient descent, e.g., (mini-batch) stochastic gradient descent (SGD) was developed~\cite{robbins1951stochastic,le2004large,li2014efficient,bertsekas1996neuro,bottou2010large,cotter2011better}. In such methods a subset $\mathcal{S} \subset \{1,2,\cdots,n\}$ is chosen at random and the update is obtained by
\begin{equation*}
\xx^{(k+1)} = \xx^{(k)} - \alpha_{k} \sum_{j \in \mathcal{S}} \nabla f_{j}(\xx^{(k)}).
\end{equation*}
When $|\mathcal{S}| \ll n$ (e.g., $|\mathcal{S}| = 1$ for simple SGD), the main advantage of such stochastic gradient methods is that the iteration cost is independent of $n$ and can be much cheaper than the full gradient methods, making them suitable for modern problems with large $n$. 

The above class of methods are among what is known as \textit{first-order} methods where only
the gradient information is used at every iteration. One attractive feature of such class of
methods is their relatively low per-iteration-cost.  Despite the low per-iteration-cost of first order methods, in almost all problems, incorporating curvature information (e.g., Hessian) as a form of scaling the gradient, i.e., 
\begin{equation*}
\xx^{(k+1)} = \xx^{(k)} - \alpha_{k} D_{k} \nabla F(\xx^{(k)}),
\end{equation*}
can significantly improve the convergence rate. Such class of methods which take the curvature information into account are known as \textit{second-order} methods, and compared to first-order methods, they enjoy superior convergence rate in both theory and practice. This is so since there is an implicit local \textit{scaling} of coordinates at a given $\xx$, which is determined by the local curvature of $F$. This local curvature in fact determines the condition number of a $F$ at $\xx$. Consequently, by taking the curvature information into account (e.g., in the form of the Hessian), second order methods can rescale the gradient direction so it is a much more ``useful'' direction to follow. This is in contrast to first order methods which can only scale the gradient uniformly for all coordinates. Such second order information have long been used in many machine learning applications~\cite{bottou1998online,yu2010quasi,lin2008trust,martens2010deep,byrd2011use,byrd2012sample}. 

The canonical example of second order methods, i.e., Newton's method~\cite{nesterov2004introductory,boyd2004convex,nocedal2006numerical}, is with $D_{k}$ taken to be the inverse of the full Hessian and $\alpha_{k}=1$, i.e.,
\begin{equation*}
\xx^{(k+1)} = \xx^{(k)} - [\nabla^{2} F(\xx^{(k)})]^{-1} \nabla F(\xx^{(k)}).
\end{equation*}
It is well known that for smooth and strongly convex function $F$, the Newton direction is always a descent direction and by introducing a step-size, $\alpha_{k}$, it is possible to guarantee the global convergence (by globally convergent algorithm, it is meant an algorithm that approaches the optimal solution starting from any initial point). 
In addition, for cases where $F$ is not strongly convex, the Levenberg-Marquardt type regularization,~\cite{levenberg1944algorithm, marquardt1963algorithm}, of the Hessian can be used to obtain globally convergent algorithm. An important property of Newton's method is \textit{scale invariance}. More precisely, for some new parametrization $\tilde{\xx} = A \xx$ for some invertible matrix A, the optimal search direction in the new coordinate system is $\tilde{\pp} = A \pp$ where $\pp$ is the original optimal search direction. By contrast, the search direction produced by gradient descent behaves in an opposite fashion as $\tilde{\pp} = A^{-T} \pp$. Such scale invariance property is important to more effectively optimize poorly scaled parameters; see~\cite{martens2010deep} for a very nice and intuitive explanation of this phenomenon. 

However, when $n,p\gg1$, the per-iteration-cost of such algorithm is significantly higher than that of first-order methods. As a result, a line of research is to try to construct an approximation of the Hessian in a way that the update is computationally feasible, and yet, still provides sufficient second order information. One such class of methods are quasi-Newton methods, which are a generalization of the secant method to find the root of the first derivative for multidimensional problems. In such methods, the approximation to the Hessian is updated iteratively using only first order information from the gradients and the iterates through low-rank updates. Among these methods, the celebrated Broyden-Fletcher-Goldfarb-Shanno (BFGS) algorithm~\cite{nocedal2006numerical} and its limited memory version (L-BFGS)~\cite{nocedal1980updating,liu1989limited}, are the most popular and widely used members of this class. Another class of methods for approximating the Hessian is based on \textit{sub-sampling} where the Hessian of the full function $F$ is estimated using that of the randomly selected subset of functions $f_{i}$,~\cite{byrd2011use, byrd2012sample, erdogdu2015convergence,martens2010deep}. More precisely, a subset $\mathcal{S} \subset \{1,2,\cdots,n\}$ is chosen at random and, if the sub-sampled matrix is invertible, the update is obtained by 
\begin{subequations}
\begin{equation}
\xx^{(k+1)} = \xx^{(k)} - \alpha_{k} \big[\sum_{j \in \mathcal{S}} \nabla^{2} f_{j}(\xx^{(k)})\big]^{-1} \nabla F(\xx^{(k)}),
\label{sub_hess_unconstrained}
\end{equation}
In fact, sub-sampling can also be done for the gradient, obtaining a fully stochastic iteration
\begin{equation}
\xx^{(k+1)} = \xx^{(k)} - \alpha_{k} \big[\sum_{j \in \mathcal{S}_{H}} \nabla^{2} f_{j}(\xx^{(k)})\big]^{-1} \sum_{j \in \mathcal{S}_{\bgg}} \nabla f_{j}(\xx^{(k)}),
\label{sub_hess_grad_unconstrained}
\end{equation}
\label{sub_sampled_unconstrained}
\end{subequations}
where $\mathcal{S}_{\bgg}$ and $\mathcal{S}_{H}$ are sample sets used for approximating the Hessian and the gradient, respectively. The variants~\eqref{sub_sampled_unconstrained} are what we call \textit{sub-sampled Newton methods} in this paper.

This paper has a companion paper~\cite{romassn2}, henceforth called SSN2, which considers the technically-more-sophisticated local convergence rates for sub-sampled Newton methods (by local convergence, it is meant that the initial iterate is close enough to a local minimizer at which the sufficient conditions hold). However, here, we only concentrate on designing such algorithms with global convergence guarantees. In doing so, we need to ensure the following requirements:
\begin{enumerate}[(R.1)]
	\item \label{small_sample_size} Our sampling strategy needs to provide a sample size $|\mathcal{S}|$ which is independent of $n$, or at least smaller. Note that this is the same requirement as in SSN2~\cite[(R.1)]{romassn2}. However, as a result of the simpler goals of the present paper, we will show that, comparatively, a much smaller sample size can be required here than that used in SSN2~\cite{romassn2}.
	\item \label{invertibility} In addition, any such method must, at least probabilistically, ensure that the sub-sampled matrix is invertible. If the gradient is also sub-sampled, we need to ensure that sampling is done in a way to keep as much of this first order information as possible. Note that unlike SSN2~\cite[(R.2)]{romassn2} where we require a strong spectrum-preserving property, here, we only require the much weaker invertibility condition, which is enough to yield global convergence. 
		\item \label{global_rate} We need to ensure that our designed algorithms are globally convergent and approach the optimum starting from any initial guess. In addition, we require to have bounds which yield explicit convergence rate as opposed to asymptotic results. Note that unlike SSN2~\cite[(R.3)]{romassn2} where our focus is on speed, here, we mainly require global convergence guarantees.
	\item \label{inexact_solve} For $p \gg 1$, even when the sub-sampled Hessian is invertible, computing the update at each iteration can indeed pose a significant computational challenge. More precisely, it is clear from~\eqref{sub_sampled_unconstrained} that to compute the update, sub-sampled Newton methods require the solution of a linear system, which regardless of the sample size, can be the bottleneck of the computations. Solving such systems \textit{inexactly} can further improve the computational efficiency of sub-sampled algorithms. Hence, it is imperative to allow for approximate solutions and still guarantee convergence.
\end{enumerate}

In this paper, we give global convergence rates for sub-sampled Newton methods, addressing challenges~\hyperref[small_sample_size]{(R.1)},~\hyperref[invertibility]{(R.2)},~\hyperref[inexact_solve]{(R.3)} and~\hyperref[global_rate]{(R.4)}. As a result, the local rates of the companion paper, SSN2~\cite{romassn2}, coupled with the global convergence guarantees presented here, provide globally convergent  algorithms with \textit{fast} and \textit{problem-independent} local rates (e.g., see Theorems~\ref{local_global_newton} and~\ref{local_global_newton_grad}). To the best of our knowledge, the present paper and SSN2~\cite{romassn2} are the very first to thoroughly and quantitatively study the convergence behavior of such sub-sampled second order algorithms, in a variety of settings.

\subsection{Notation and Assumptions}
\label{sec:not_assmpts}
Throughout the paper, vectors are denoted by bold lowercase letters, e.g., $\vv$, and matrices or random variables are denoted by regular upper case letters, e.g., $V$, which is clear from the context. For a vector $\vv$, and a matrix $V$, $\|\vv\|$ and $\|V\|$ denote the vector $\ell_{2}$ norm and the matrix spectral norm, respectively, while $\|V\|_{F}$ is the matrix Frobenius norm. $\nabla f(\xx)$ and $\nabla^{2} f(\xx)$ are the gradient and the Hessian of $f$ at $\xx$, respectively and $\mathbb{I}$ denotes the identity matrix. For two symmetric matrices $A$ and $B$, $A \succeq B$ indicates that $A-B$ is symmetric positive semi-definite. The superscript, e.g., $\xx^{(k)}$, denotes iteration counter and $\ln(x)$ is the natural logarithm of $x$. Throughout the paper, $\mathcal{S}$ denotes a collection of indices from $\{1,2,\cdots,n\}$, with potentially repeated items and its cardinality is denoted by $|\mathcal{S}|$.

For our analysis throughout the paper, we make the following blanket assumptions: we require that each $f_{i}$ is twice-differentiable, smooth and convex, i.e., for some $0 < K_{i} < \infty$ and $\forall \xx \in \mathbb{R}^{p}$
\begin{subequations}
\label{strong_convex_boundedness}
\begin{equation}
0 \preceq \nabla^{2} f_{i}(\xx) \preceq K_{i} \mathbb{I}.
\label{convex_lipschitz}
\end{equation}
We also assume that $F$ is smooth and strongly convex, i.e., for some $0< \gamma \leq K < \infty$ and $\forall \xx \in  \mathbb{R}^{p}$ 
\begin{equation}
\label{F_strong_gen}
\gamma \mathbb{I} \preceq \nabla^{2} F(\xx) \preceq K \mathbb{I}. 
\end{equation}
\end{subequations}
Note that Assumption~\eqref{F_strong_gen} implies uniqueness of the minimizer, $\xx^{*}$, which is assumed to be attained. The quantity 
\begin{equation}
\label{cond_F}
\kappa \defeq \frac{K}{\gamma},
\end{equation}
is known as the condition number of the problem. 

For an integer $1 \leq q \leq n$, let $\mathcal{Q}$ be the set of indices corresponding to $q$ largest $K_{i}$'s and define the ``sub-sampling'' condition number as
\begin{equation}
\label{cond_q}
\kappa_{q} \defeq \frac{\widehat{K}_{q}}{\gamma},
\end{equation}
where 
\begin{equation}
\label{K_q}
\widehat{K}_{q} \defeq \frac{1}{q} \sum_{j \in \mathcal{Q}} K_{j}.
\end{equation}
It is easy to see that for any two integers $q$ and $r$ such that $1 \leq q \leq r \leq n$, we have $\kappa \leq \kappa_{r} \leq \kappa_{q}$. Finally, define
\begin{equation}
\tilde{\kappa} \defeq\begin{cases}
               \kappa_{1}, \quad \;\;\text{If sample $\mathcal{S}$ is drawn \textit{with} replacement}\\
               \kappa_{|\mathcal{S}|}, \quad \text{If sample $\mathcal{S}$ is drawn \textit{without} replacement}
            \end{cases},
\label{cond_w_wo_rep}
\end{equation}
where $\kappa_{1}$ and $\kappa_{|\mathcal{S}|}$ are as in~\eqref{cond_q}.

\subsection{Contributions}
\label{sec:contributions}
The contributions of this paper can be summarized as follows:
\begin{enumerate}[(1)]
	\item Under the above assumptions, we propose various globally convergent algorithms. These algorithms are guaranteed to approach the optimum, regardless of their starting point. Our algorithms are designed for the following settings.
	\begin{enumerate}[(i)]
		\item Algorithm~\ref{alg_global} practically implements~\eqref{sub_hess_unconstrained}. In other words, we incorporate sub-sampled Hessian while the full gradient is used. Theorem~\ref{global_newton} establishes the global convergence of Algorithm~\ref{alg_global}.
		
		\item Algorithms~\ref{alg_global_spectral} and~\ref{alg_global_ridge}, are modifications of Algorithm~\ref{alg_global} in which the sub-sampled Hessian is regularized by modifying its spectrum or by Levenberg-type regularization (henceforth called ridge-type regularization), respectively. Such regularization can be used to guarantee global convergence, in the absence of positive definiteness of the full Hessian. Theorems~\ref{global_newton_spectral} and~\ref{global_newton_ridge} as well as Corollaries~\ref{global_newton_spectral_2} and~\ref{global_newton_ridge_2} guarantee global convergence of these algorithms.
		
		\item Algorithm~\ref{alg_global_grad} is the implementation of the fully stochastic formulation~\eqref{sub_hess_grad_unconstrained}, in which the gradient as well the Hessian is sub-sampled. The global convergence of Algorithm~\ref{alg_global_grad} is guaranteed by Theorem~\ref{global_newton_grad}.
	\end{enumerate}
	\item For all of these algorithms, we give quantitative convergence results, i.e., our bounds contain an actual worst-case convergence rate. Our bounds here depend on problem dependent factors, i.e., condition numbers $\kappa$ and $\tilde{\kappa}$, and hold for a finite number of iterations. When these results are combined with those in SSN2~\cite{romassn2}, we obtain local convergence rates which are problem-independent; see Theorems~\ref{local_global_newton} and~\ref{local_global_newton_grad}. These connections guarantee that our proposed algorithms here, have a much faster convergence rates, at least locally, than what the simpler theorems in this paper suggest.
	
	\item For all of our algorithms, we present analysis for the case of inexact update where the linear system is solved only approximately, to a given tolerance. In addition, we establish criteria for the tolerance to guarantee faster convergence rate. The results of Theorems~\ref{global_newton_inexact},~\ref{global_newton_spectral},~\ref{global_newton_ridge}, and~\ref{global_newton_grad_inexact} give global convergence of the corresponding algorithms with inexact updates.
\end{enumerate}

\subsection{Related Work}
\label{sec:related_work}
The results of Section~\ref{sec:subsampl_newton_hess} offer computational efficiency for the regime where both $n$ and $p$ are large. However, it is required that $n$ is not so large as to make the gradient evaluation prohibitive. In such regime (where $n , p \gg 1$ but $n$ is not too large), similar results can be found in~\cite{byrd2011use, martens2010deep,pilanci2015newton,erdogdu2015convergence}. The pioneering work in~\cite{byrd2011use} establishes, for the first time, the convergence of Newton's method with sub-sampled Hessian and full gradient. There, two sub-sampled Hessian algorithms are proposed, where one is based on a matrix-free inexact Newton iteration and the other incorporates a preconditioned limited memory BFGS iteration. However, the results are asymptotic, i.e., for $k \rightarrow \infty$, and no quantitative convergence rate is given. In addition, convergence is established for the case where each $f_{i}$ is assumed to be strongly convex. Within the context of deep learning,~\cite{martens2010deep} is the first to study the application of a modification of Newton's method. It suggests a heuristic algorithm where at each iteration, the full Hessian is approximated by a relatively large subset of $\nabla^{2}f_{i}$'s, i.e. a ``mini-batch'', and the size of such mini-batch grows as the optimization progresses. The resulting matrix is then damped in a Levenberg-Marquardt style,~\cite{levenberg1944algorithm, marquardt1963algorithm}, and conjugate gradient,~\cite{shewchuk1994introduction}, is used to approximately solve the resulting linear system. The work in~\cite{pilanci2015newton} is the first to use ``sketching'' within the context of Newton-like methods. The authors propose a randomized second-order method which is based on performing an approximate Newton step using a randomly sketched Hessian. In addition to a few local convergence results, the authors give global convergence rate for self-concordant functions.  However, their algorithm is specialized to the cases where some square root of the Hessian matrix is readily available, i.e., some matrix $C(\xx) \in \mathbb{R}^{s \times p}$, such that $\nabla^{2} F(\xx) = C^{T}(\xx) C(\xx)$. Local convergence rate for the case where the Hessian is sub-sampled is first established in~\cite{erdogdu2015convergence}. The authors suggest an algorithm, where at each iteration, the spectrum of the sub-sampled Hessian is modified as a form of regularization and give locally linear convergence rate.

The results of Section~\ref{sec:subsampl_newton_hess_grad}, can be applied to more general setting where $n$ can be arbitrarily large. This is so since sub-sampling the gradient, in addition to that of the Hessian, allows for iteration complexity, which can be much smaller than $n$. Within the context of first order methods, there has been numerous variants of gradient sampling from a simple stochastic gradient descent,~\cite{robbins1951stochastic}, to the most recent improvements by incorporating the previous gradient directions in the current update~\cite{schmidt2013minimizing,senior2013empirical,bottou2010large,johnson2013accelerating}. For second order methods, such sub-sampling strategy has been successfully applied in large scale non-linear inverse problems~\cite{doas12,rodoas1,aravkin2012robust,van2013lost,haber2014simultaneous}. However, to the best of our knowledge, Section~\ref{sec:subsampl_newton_hess_grad} offers the first quantitative and global convergence results for such sub-sampled methods.

Finally, inexact updates have been used in many second-order optimization algorithms; see~\cite{byrd2013inexact,dembo1982inexact,scheinberg2013practical,nash2000survey,schmidt2009optimizing} and references therein.

\section{Sub-Sampling Hessian}
\label{sec:subsampl_newton_hess}
For the optimization problem~\eqref{obj}, at each iteration, consider picking a sample of indices from $\{1,2,\ldots,n\}$, uniformly at random \textit{with} or \textit{without} replacement. Let $\mathcal{S}$ and $|\mathcal{S}|$ denote the sample collection and its cardinality, respectively and define 
\begin{equation}
H(\xx) \defeq \frac{1}{|\mathcal{S}|} \sum_{j \in \mathcal{S}} \nabla^{2} f_{j}(\xx),
\label{subsampled_H}
\end{equation}
to be the sub-sampled Hessian. As mentioned before in Section~\ref{sec:general_background}, in order for such sub-sampling to be useful, we need to ensure that the sample size $|\mathcal{S}|$ satisfies the requirement~\hyperref[small_sample_size]{(R.1)}, while $H(\xx)$ is invertible as mentioned in~\hyperref[invertibility]{(R.2)}. Below, we make use of random matrix concentration inequalities to probabilistically guarantee such properties.

\begin{lemma}[Uniform Hessian Sub-Sampling]
Given any $0 < \epsilon < 1$, $0 < \delta < 1$, and $\xx \in \mathbb{R}^{p}$, 
if 
\begin{equation}
|\mathcal{S}| \geq \frac{2 \kappa_{1} \ln(p/\delta)}{\epsilon^{2}},
\label{uniform_sample_size_Chernoff}
\end{equation}
then for $H(\xx)$ defined in~\eqref{subsampled_H}, we have
\begin{equation}
\Pr\Big( (1-\epsilon)  \gamma \leq \lambda_{\min}\left(H(\xx) \right) \Big) \geq 1-\delta,
\label{eig_chernoff}
\end{equation}
where $\gamma$ and $\kappa_{1}$ are defined in~\eqref{F_strong_gen} and~\eqref{cond_q}, respectively.
\label{chernoff_lemma}
\end{lemma}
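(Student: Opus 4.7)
The plan is to reduce the claim to Tropp's matrix Chernoff inequality for sums of independent random positive semi-definite matrices. First, I would parametrize the sample: for the with-replacement case, let $I_1,\dots,I_{|\mathcal{S}|}$ be i.i.d.\ uniform on $\{1,\dots,n\}$, and define the independent Hermitian PSD matrices
\[
X_j \defeq \tfrac{1}{|\mathcal{S}|}\nabla^{2} f_{I_j}(\xx),\qquad j=1,\dots,|\mathcal{S}|,
\]
so that $H(\xx)=\sum_{j} X_j$. The without-replacement case only reduces variance, so the same tail bound applies (one can invoke the negative-association version of the matrix Chernoff inequality, or simply note the with-replacement bound majorizes it).

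Next, I would verify the two hypotheses needed by matrix Chernoff. By assumption~\eqref{convex_lipschitz} together with the definition of $\widehat{K}_1$ in~\eqref{K_q} (which equals $\max_i K_i$), each summand satisfies $0\preceq X_j \preceq \frac{K_1}{|\mathcal{S}|}\mathbb{I}$, so one may take $R=\widehat{K}_1/|\mathcal{S}|$. Also,
\[
\sum_{j=1}^{|\mathcal{S}|}\mathbb{E}[X_j]=\frac{1}{n}\sum_{i=1}^{n}\nabla^{2}f_i(\xx)=\nabla^{2}F(\xx)\succeq \gamma\mathbb{I},
\]
using strong convexity~\eqref{F_strong_gen}. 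Hence $\mu_{\min}\defeq \lambda_{\min}(\sum_j\mathbb{E} X_j)\geq \gamma$.

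Then I would apply the matrix Chernoff lower tail bound: for any $\epsilon\in(0,1)$,
\[
\Pr\!\bigl(\lambda_{\min}(H(\xx))\leq (1-\epsilon)\mu_{\min}\bigr)\;\leq\; p\exp\!\Bigl(-\tfrac{\epsilon^{2}\mu_{\min}}{2R}\Bigr).
\]
Since $(1-\epsilon)\gamma\leq (1-\epsilon)\mu_{\min}$, the event $\{\lambda_{\min}(H(\xx))\leq (1-\epsilon)\gamma\}$ is contained in the event above, and substituting the worst-case $\mu_{\min}=\gamma$ and $R=\widehat{K}_1/|\mathcal{S}|$ yields
\[
\Pr\!\bigl(\lambda_{\min}(H(\xx))< (1-\epsilon)\gamma\bigr)\;\leq\; p\exp\!\Bigl(-\tfrac{\epsilon^{2}|\mathcal{S}|}{2\kappa_{1}}\Bigr),
\]
recalling $\kappa_{1}=\widehat{K}_1/\gamma$ from~\eqref{cond_q}. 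Finally, I would force the right-hand side to be at most $\delta$ by solving $p\,e^{-\epsilon^{2}|\mathcal{S}|/(2\kappa_1)}\leq\delta$, which gives exactly the sample-size condition~\eqref{uniform_sample_size_Chernoff}.

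The main obstacle is not any single calculation but making sure the substitution of the loose lower bound $\gamma\leq\mu_{\min}$ into the Chernoff conclusion is justified (so the user-facing threshold is in terms of $\gamma$, not the actual expected minimum eigenvalue); the monotonicity argument above handles this cleanly. A secondary subtlety is the without-replacement case, which I would dispatch by a one-line remark that sampling without replacement is no worse than with replacement for lower tails of $\lambda_{\min}$ of sums of PSD matrices.
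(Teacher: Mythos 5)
Your proposal is correct and follows essentially the same route as the paper: both reduce the claim to the matrix Chernoff lower-tail bound for sums of independent PSD matrices (with the without-replacement case handled by the corresponding variant of the inequality), verify the uniform bound $\widehat{K}_{1}$ on each summand and the bound $\gamma$ on $\lambda_{\min}$ of the expected sum, and then solve $p\,e^{-\epsilon^{2}|\mathcal{S}|/(2\kappa_{1})}\leq\delta$ for the sample size. The only cosmetic difference is that you invoke the simplified exponential form of the tail directly, whereas the paper first writes the $\bigl[e^{-\epsilon}/(1-\epsilon)^{(1-\epsilon)}\bigr]$ form and then bounds it by $e^{-\epsilon^{2}/2}$; your explicit monotonicity remark about substituting $\gamma\leq\mu_{\min}$ is a welcome bit of extra care.
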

Hence, depending on $\kappa_{1}$, the sample size $|\mathcal{S}|$ can be smaller than $n$. In addition, we can always probabilistically guarantee that the sub-sampled Hessian is uniformly positive definite and, consequently, the direction given by it, indeed, yields a direction of descent.  

It is important to note that the sufficient sample size, $|\mathcal{S}|$, here grows only \textit{linearly} in $\kappa_{1}$, i.e., ${\Omega}(\kappa_{1})$, as opposed to \textit{quadratically}, i.e., $\Omega(\kappa_{1}^{2})$, in~\cite{romassn2,erdogdu2015convergence}. In fact, it might be worth elaborating more on the differences between the above sub-sampling strategy and that of SSN2~\cite[Lemmas 1, 2, and 3]{romassn2}. These differences, in fact, boil down to the differences between the requirement~\hyperref[invertibility]{(R.2)} and the corresponding one in SSN2~\cite[Section 1.1, (R.2)]{romassn2}. As a result of a ``coarser-grained'' analysis in the present paper and in order to guarantee global convergence, we only require that the sub-sampled Hessian is uniformly postive definite. Consequently, Lemma~\ref{chernoff_lemma} require a smaller sample size, i.e., in the order of $\kappa_{1}$ vs.\ $\kappa_{1}^{2}$ for SSN2~\cite[Lemma 1, 2, and 3]{romassn2}, while delivering a much weaker guarantee about the invertibility of the sub-sampled Hessian. In contrast, for the finer-grained analysis in SSN2~\cite{romassn2}, we needed a much stronger guarantee to preserve the spectrum of the true Hessian, and not just simple invertibility.

\subsection{Globally Convergent Newton with Hessian Sub-Sampling}
\label{sec:global_con_nm}
In this section, we give a globally convergent algorithms with Hessian sub-sampling which, starting from any initial iterate $\xx^{(0)} \in  \mathbb{R}^{p}$, converges to the optimal solution. Such algorithm for the unconstrained problem and when each $f_{i}$ is smooth and strongly convex is given in the pioneering work~\cite{byrd2011use}. Using Lemma~\ref{chernoff_lemma}, we now give such a globally-convergent algorithm under a milder assumption~\eqref{F_strong_gen}, where strong convexity is only assumed for $F$.

In Section~\ref{sec:global_con_nm_exact}, we first present an iterative algorithm in which, at every iteration, the linear system in~\eqref{sub_hess_unconstrained} is solved  exactly. In Section~\ref{sec:global_con_nm_inexact}, we then present a modified algorithm where such step is done only approximately and the update is computed inexactly, to within a desired tolerance. Finally, Section~\ref{sec:modified_hessian} will present algorithms in which the sub-sampled Hessian is regularized through modifying its spectrum or ridge-type regularization. For this latter section, the algorithms are given for the case of inexact update as extensions to exact solve is straightforward. The proofs of all the results are given in the appendix.

\subsubsection{Exact update}
\label{sec:global_con_nm_exact}
For the sub-sampled Hessian $H(\xx^{(k)})$, consider the update 
\begin{subequations}
\begin{equation}
\xx^{(k+1)} = \xx^{(k)} + \alpha_{k} \pp_{k},
\label{global_x}
\end{equation}
where
\begin{equation}
\pp_{k} = -[H(\xx^{(k)})]^{-1} \nabla F(\xx^{(k)}),
\label{global_p}
\end{equation} 
and
\begin{equation}
\begin{aligned}
\alpha_{k} = \arg\max &\quad \alpha\\
\text{s.t.} & \quad \alpha \leq \widehat{\alpha} \\
& \quad F(\xx^{(k)} + \alpha \pp_{k}) \leq F(\xx^{(k)}) + \alpha \beta \pp_{k}^{T} \nabla F(\xx^{(k)}),
\end{aligned}
\label{global_alpha}
\end{equation}
\label{global_iteration}
\end{subequations}
for some $\beta \in (0,1)$ and $\widehat{\alpha} \geq 1$. Recall that~\eqref{global_alpha} can be approximately solved using various methods such as Armijo backtracking line search~\cite{armijo1966minimization}.

\begin{algorithm}
\caption{Globally Convergent Newton with Hessian Sub-Sampling}
\begin{algorithmic}[1]
\STATE \textbf{Input:} $\xx^{(0)}$, $0 < \delta < 1$, $0 < \epsilon < 1$, $0 < \beta < 1$, $\widehat{\alpha} \geq 1$ 
\STATE - Set the sample size, $|\mathcal{S}|$, with $\epsilon$ and $\delta$ as in~\eqref{uniform_sample_size_Chernoff} 
\FOR {$k = 0,1,2, \cdots$ until termination} 
\STATE - Select a sample set, $\mathcal{S}$, of size $|\mathcal{S}|$ and form $H(\xx^{(k)})$ as in~\eqref{subsampled_H}
\STATE - Update $\xx^{(k+1)}$ as in~\eqref{global_iteration} with $H(\xx^{(k)})$
\ENDFOR
\end{algorithmic}
\label{alg_global}
\end{algorithm}

\begin{theorem}[Global Convergence of Algorithm~\ref{alg_global}]
\label{global_newton}
Let Assumptions~\eqref{strong_convex_boundedness} hold. Using Algorithm~\ref{alg_global} with any $\xx^{(k)} \in  \mathbb{R}^{p}$,  with probability $1-\delta$, we have
\begin{equation}
F(\xx^{(k+1)}) - F(\xx^{*}) \leq (1 - \rho) \big(F(\xx^{(k)}) - F(\xx^{*})\big),
\label{global_linear_rate}
\end{equation}  
where $$\rho = \frac{2 \alpha_{k} \beta}{\tilde{\kappa}},$$ and $\tilde{\kappa}$ is defined as in~\eqref{cond_w_wo_rep}.
Moreover, the step size is at least
\begin{equation*}
\alpha_{k} \geq \frac{2 (1- \beta)(1-\epsilon)}{\kappa},
\end{equation*}
where $\kappa$ is defined as in~\eqref{cond_F}.
\end{theorem}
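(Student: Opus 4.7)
The plan is to combine Lemma~\ref{chernoff_lemma} (which controls the smallest eigenvalue of $H(\xx^{(k)})$) with a deterministic upper bound on $H(\xx^{(k)})$, then run a standard Armijo-type line-search analysis, and finally invoke a quadratic growth inequality for strongly convex $F$ to convert the per-iteration decrease into a linear rate.

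First, by Lemma~\ref{chernoff_lemma}, with probability at least $1-\delta$ the sample size prescribed in the algorithm forces $\lambda_{\min}(H(\xx^{(k)})) \geq (1-\epsilon)\gamma$, so $H(\xx^{(k)})$ is positive definite and $\pp_k = -[H(\xx^{(k)})]^{-1}\nabla F(\xx^{(k)})$ satisfies $\pp_k^{T}\nabla F(\xx^{(k)}) < 0$ unless $\nabla F(\xx^{(k)}) = \zero$; in particular the line search~\eqref{global_alpha} makes sense. On the other side, I want a deterministic upper bound $H(\xx^{(k)}) \preceq \tilde{K}\mathbb{I}$ with $\tilde{K}=\gamma\tilde{\kappa}$. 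For sampling \emph{with} replacement, every drawn index $j$ obeys $\nabla^{2}f_{j}(\xx^{(k)}) \preceq K_{j}\mathbb{I} \preceq \widehat{K}_{1}\mathbb{I}$, so $H(\xx^{(k)}) \preceq \widehat{K}_{1}\mathbb{I} = \gamma\kappa_{1}\mathbb{I}$; for sampling \emph{without} replacement, the worst case chooses the $|\mathcal{S}|$ largest $K_{i}$'s, yielding $H(\xx^{(k)}) \preceq \widehat{K}_{|\mathcal{S}|}\mathbb{I}=\gamma\kappa_{|\mathcal{S}|}\mathbb{I}$. Either way, $H(\xx^{(k)}) \preceq \gamma\tilde{\kappa}\mathbb{I}$.

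Next I derive the step-size lower bound. By assumption~\eqref{F_strong_gen}, $\nabla F$ is $K$-Lipschitz, so the descent lemma gives
\begin{equation*}
F(\xx^{(k)}+\alpha\pp_{k}) \leq F(\xx^{(k)}) + \alpha\,\nabla F(\xx^{(k)})^{T}\pp_{k} + \tfrac{K\alpha^{2}}{2}\|\pp_{k}\|^{2}.
\end{equation*}
Comparing with the Armijo requirement in~\eqref{global_alpha}, a sufficient condition for acceptance of $\alpha$ is $\tfrac{K\alpha}{2}\|\pp_{k}\|^{2} \leq (1-\beta)\,(-\pp_{k}^{T}\nabla F(\xx^{(k)}))$. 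Substituting $\pp_{k}=-H^{-1}\nabla F(\xx^{(k)})$ and using $\|\pp_{k}\|^{2}=\nabla F^{T}H^{-2}\nabla F \leq \lambda_{\min}(H)^{-1}\,\nabla F^{T}H^{-1}\nabla F$, combined with $\lambda_{\min}(H) \geq (1-\epsilon)\gamma$, the sufficient condition simplifies to $\alpha \leq 2(1-\beta)(1-\epsilon)/\kappa$, producing the stated lower bound on $\alpha_{k}$.

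Finally, the Armijo inequality itself yields
\begin{equation*}
F(\xx^{(k+1)}) - F(\xx^{(k)}) \leq -\alpha_{k}\beta\,\nabla F(\xx^{(k)})^{T} H^{-1}\nabla F(\xx^{(k)}) \leq -\frac{\alpha_{k}\beta}{\gamma\tilde{\kappa}}\|\nabla F(\xx^{(k)})\|^{2},
\end{equation*}
using the upper bound $H \preceq \gamma\tilde{\kappa}\mathbb{I}$. Strong convexity of $F$ gives the standard gradient-domination bound $\|\nabla F(\xx^{(k)})\|^{2} \geq 2\gamma(F(\xx^{(k)}) - F(\xx^{*}))$, and plugging this in and rearranging produces $F(\xx^{(k+1)}) - F(\xx^{*}) \leq (1-2\alpha_{k}\beta/\tilde{\kappa})(F(\xx^{(k)}) - F(\xx^{*}))$, which is~\eqref{global_linear_rate}.

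The main obstacle is matching the correct definition of $\tilde{\kappa}$ in both sampling regimes so that the deterministic upper bound $H \preceq \gamma\tilde\kappa\mathbb{I}$ is tight enough to give a useful rate; the rest is bookkeeping around the Armijo condition and the quadratic growth property. A minor subtlety is that the upper bound on $H$ is deterministic while the lower bound is only probabilistic, so the final statement holds with probability $1-\delta$ (inherited from Lemma~\ref{chernoff_lemma}), and both bounds must be combined on the same event.
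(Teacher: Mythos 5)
Your proposal is correct and follows essentially the same route as the paper's proof: Lemma~\ref{chernoff_lemma} for the lower eigenvalue bound and descent property, the descent lemma plus the Armijo condition for the step-size lower bound $2(1-\beta)(1-\epsilon)/\kappa$, the deterministic bound $H \preceq \widehat{K}_{|\mathcal{S}|}\mathbb{I}$ (resp.\ $\widehat{K}_{1}\mathbb{I}$) to control $\nabla F^{T}H^{-1}\nabla F$ from below, and the gradient-domination inequality $F(\xx)-F(\xx^{*}) \leq \|\nabla F(\xx)\|^{2}/(2\gamma)$ to close the recursion. The only (immaterial) difference is that you state the upper bound on $H$ as an explicit operator inequality rather than bounding $\pp_{k}^{T}H\pp_{k}$ directly, and you correctly flag that the probabilistic and deterministic bounds must hold on the same event.
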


Theorem~\ref{global_newton} guarantees global convergence with at least a linear rate which depends on the quantities related to the specific problem. In SSN2~\cite{romassn2}, we have shown, through a finer grained analysis, that the locally linear convergence rate of such sub-sampled Newton method with a constant step size $\alpha_{k} = 1$ is indeed problem independent. In fact, it is possible to combine both results to obtain a globally convergent algorithm with a locally linear and \textit{problem-independent} rate, which is indeed much faster than what Theorem~\ref{global_newton} implies. 


\begin{theorem}[Global Conv.\ of Alg.\ \ref{alg_global} with Problem-Independent Local Rate]
Let Assumptions~\eqref{strong_convex_boundedness} hold and each $f_{i}$ have a Lipschitz continuous Hessian as 
\begin{equation}
\label{Hess_Lip}
\| \nabla^{2} f_{i}(\xx) - \nabla^{2} f_{i}(\yy) \| \leq L \|\xx-\yy\|, \quad i = 1,2,\ldots,n.
\end{equation}
Consider any $0 < \rho_{0} < \rho_{1} < 1$. Using Algorithm~\ref{alg_global} with any $\xx^{(0)} \in  \mathbb{R}^{p}$, $\widehat{\alpha} = 1$, $0 < \beta < 1/2$ and $$\epsilon \leq \min\left\{\frac{(1-2\beta)}{2(1-\beta)},\frac{\rho_{0}}{4(1 + \rho_{0})\sqrt{\kappa_{1}}}\right\},$$ after 
\begin{equation}
k \geq \ln \left( \frac{2(1-\epsilon)^{2} \gamma^{4} (\rho_{1} - \rho_{0})^{2}\big(1-2\epsilon - 2 (1-\epsilon) \beta\big)^{2}}{K L^{2} \left(F(\xx^{(0)}) - F(\xx^{*}) \right) } \right) / \ln\left(1 - \frac{4 \beta  (1- \beta)(1-\epsilon) }{\tilde{\kappa} \kappa}\right)
\label{local_k}
\end{equation}
iterations, with probability $(1-\delta)^{k}$ we get ``problem-independent'' Q-linear convergence, i.e.,
\begin{equation}
\|\xx^{(k+1)} - \xx^{*}\| \leq \rho_{1} \|\xx^{(k)} - \xx^{*}\|,
\label{local_rate}
\end{equation}
where $\kappa$, $\kappa_{1}$ and $\tilde{\kappa}$ are defined in~\eqref{cond_F},~\eqref{cond_q} and~\eqref{cond_w_wo_rep}, respectively. 
Moreover, the step size of $\alpha_{k} = 1$ is selected in~\eqref{global_alpha} for all subsequent iterations.
\label{local_global_newton}
\end{theorem}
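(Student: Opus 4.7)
The plan is to chain the linear-rate global descent of Theorem~\ref{global_newton} with the local, problem-independent Q-linear rate of SSN2~\cite{romassn2}. The global phase drives $F(\xx^{(k)}) - F(\xx^*)$ below a threshold $T$ from which (i) the unit step $\alpha_k=1$ passes the Armijo test~\eqref{global_alpha} and (ii) the SSN2 contraction takes over with rate $\rho_1$.

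First I would iterate Theorem~\ref{global_newton}. Substituting the guaranteed lower bound $\alpha_k \geq 2(1-\beta)(1-\epsilon)/\kappa$ into the contraction factor $1-\rho = 1 - 2\alpha_k\beta/\tilde{\kappa}$, and using independence of the Hessian samples drawn across iterations, with probability at least $(1-\delta)^k$,
\begin{equation*}
F(\xx^{(k)}) - F(\xx^*) \leq \left(1 - \frac{4\beta(1-\beta)(1-\epsilon)}{\tilde{\kappa}\,\kappa}\right)^{k}\left(F(\xx^{(0)}) - F(\xx^*)\right).
\end{equation*}
Setting
\begin{equation*}
T \defeq \frac{2(1-\epsilon)^2(\rho_1-\rho_0)^2\gamma^4\bigl(1-2\epsilon-2(1-\epsilon)\beta\bigr)^2}{K L^2},
\end{equation*}
the lower bound on $k$ in~\eqref{local_k} is exactly the number of iterations needed to drive $F(\xx^{(k)}) - F(\xx^*) \leq T$.

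The heart of the argument is showing that $F(\xx^{(k)}) - F(\xx^*) \leq T$ implies both (i) and (ii). For (i), a Taylor expansion using the Lipschitz Hessian~\eqref{Hess_Lip} together with the Newton-step identity $\nabla F(\xx^{(k)})^T\pp_k = -\pp_k^T H(\xx^{(k)})\pp_k$ reduces Armijo at $\alpha_k=1$ to
\begin{equation*}
-(1-\beta)\pp_k^T H(\xx^{(k)})\pp_k + \tfrac{1}{2}\pp_k^T\nabla^2 F(\xx^{(k)})\pp_k + \tfrac{L}{6}\|\pp_k\|^3 \leq 0.
\end{equation*}
The restriction $\epsilon \leq (1-2\beta)/(2(1-\beta))$, coupled with a two-sided spectral bound of the form $(1-\epsilon)\nabla^2 F \preceq H \preceq (1+\epsilon)\nabla^2 F$, makes the two quadratic terms combine into a strictly negative multiple of $\|\pp_k\|^2$. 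The cubic residue is absorbed via $\|\pp_k\| \leq \|\nabla F(\xx^{(k)})\|/((1-\epsilon)\gamma)$ combined with the standard smoothness inequality $\|\nabla F(\xx^{(k)})\|^2 \leq 2K(F(\xx^{(k)}) - F(\xx^*))$; the threshold $T$ is tuned precisely so that this cubic term is dominated by the quadratic one once $F(\xx^{(k)}) - F(\xx^*) \leq T$. For (ii), once $\alpha_k=1$ is locked in, the update becomes the sub-sampled Newton step whose Q-linear contraction~\eqref{local_rate} is supplied by the SSN2 analysis under the hypothesis $\epsilon \leq \rho_0/(4(1+\rho_0)\sqrt{\kappa_1})$. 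Since $\rho_1<1$, the iterate $\xx^{(k+1)}$ remains in the basin, so $\alpha_k=1$ continues to be accepted and~\eqref{local_rate} recurs at every subsequent iteration.

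The main obstacle is supplying the two-sided spectral proximity required in the Armijo-acceptance step, because Lemma~\ref{chernoff_lemma} furnishes only the one-sided bound $\lambda_{\min}(H) \geq (1-\epsilon)\gamma$. The argument must therefore invoke the stronger matrix-concentration machinery developed in SSN2; the extra factor $\sqrt{\kappa_1}$ in the second upper bound on $\epsilon$ is exactly the price of upgrading the coarse invertibility-only guarantee of Lemma~\ref{chernoff_lemma} into the relative spectral bound needed both for the Armijo test and for the problem-independent Q-linear rate.
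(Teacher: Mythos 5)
Your proposal is correct and follows essentially the same route as the paper's proof: iterate Theorem~\ref{global_newton} with the iteration-independent step-size lower bound to drive $F(\xx^{(k)})-F(\xx^{*})$ below the threshold $T$ fixed by~\eqref{local_k}, use a Lipschitz-Hessian Taylor expansion \`a la Boyd--Vandenberghe to show the Armijo test accepts $\alpha_k=1$ once $\|\nabla F(\xx^{(k)})\|$ is small (with $\epsilon\leq(1-2\beta)/(2(1-\beta))$ making the quadratic terms strictly negative), and hand off to the SSN2 local theorem for the $\rho_1$-contraction. You also correctly identify the one subtle point, namely that the $\sqrt{\kappa_1}$ factor in the bound on $\epsilon$ is what upgrades Lemma~\ref{chernoff_lemma}'s one-sided eigenvalue guarantee to the two-sided spectral proximity $\|H-\nabla^2 F\|\leq\epsilon\gamma$ that the paper obtains from SSN2's concentration lemma.
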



In fact, it is even possible to obtain a globally convergent algorithm with locally superlinear rate of convergence using Algorithm~\ref{alg_global} with iteration dependent $\hat{\epsilon}^{(k)}$ as 
\begin{eqnarray*}
\hat{\epsilon}^{(k)} \leq \frac{\epsilon^{(k)}}{4\sqrt{\kappa_{1}}} ,
\end{eqnarray*}
where $\epsilon^{(k)}$ is chosen as in SSN2~\cite[Theorem 3 or 4]{romassn2}. The details are similar to Theorem~\ref{local_global_newton} and are omitted here.


\subsubsection{Inexact update}
\label{sec:global_con_nm_inexact}
In many situations, where finding the exact update, $\pp_{k}$, in~\eqref{global_p} is computationally expensive, it is imperative to be able to calculate the update direction $\pp_{k}$ only approximately. Such inexactness can indeed reduce the computational costs of each iteration and is particularly beneficial when the iterates are far from the optimum. This makes intuitive sense because, if the current iterate is far from $\xx^{*}$, it may be computationally wasteful to exactly solve for $\pp_{k}$ in~\eqref{global_p}. Such inexact updates have been used in many second-order optimization algorithms, e.g.~\cite{byrd2013inexact,dembo1982inexact,scheinberg2013practical}. Here, in the context of uniform sub-sampling, we give similar global results using inexact search directions inspired by~\cite{byrd2013inexact} . 

For computing the search direction, $\pp_{k}$, consider the linear system $H(\xx^{(k)}) \pp_{k} = -\nabla F(\xx^{(k)})$ at $k^{th}$ iteration. Instead, in order to allow for inexactness,  one can solve the linear system such that for some $0 \leq \theta_{1},\theta_{2} < 1$, $\pp_{k}$ satisfies
\begin{subequations}
\begin{eqnarray}
&&\|H(\xx^{(k)})\pp_{k} + \nabla F(\xx^{(k)})\| \leq \theta_{1} \|\nabla F(\xx^{(k)})\|, \label{global_p_inexact}\\
&&\pp_{k}^{T} \nabla F(\xx^{(k)}) \leq -(1-\theta_{2}) \pp_{k}^{T} H(\xx^{(k)})\pp_{k}. \label{angle_inexact}
\end{eqnarray}
\label{global_update_inexact}
\end{subequations}
The condition~\eqref{global_p_inexact} is the usual relative residual of the approximate solution. However, for a given $\theta_{1}$, any $\pp_{k}$ satisfying~\eqref{global_p_inexact} might not necessarily result in a descent direction. As a result, condition~\eqref{angle_inexact} ensures that such a $\pp_{k}$ is always a direction of descent. Note that given any $0 \leq \theta_{1},\theta_{2} < 1$, one can always find a $\pp_{k}$ satisfying~\eqref{global_update_inexact} (e.g., the exact solution always satisfies~\eqref{global_update_inexact}). 

\begin{theorem}[Global Convergence of Algorithm~\ref{alg_global}: Inexact Update]
\label{global_newton_inexact}
Let Assumptions~\eqref{strong_convex_boundedness} hold. Also let $0 \leq \theta_{1} < 1$ and $0 \leq \theta_{2} < 1$ be given. Using Algorithm~\ref{alg_global} with any $\xx^{(k)} \in  \mathbb{R}^{p}$, and the ``inexact'' update direction~\eqref{global_update_inexact} instead of~\eqref{global_p}, with probability $1-\delta$, we have that~\eqref{global_linear_rate} holds where
\begin{enumerate}[(i)]
	\item if $$\theta_{1} \leq \sqrt{\frac{(1-\epsilon)}{4 \tilde{\kappa}}},$$ then 
$\rho = {\alpha_{k} \beta }/{\tilde{\kappa}}$, 
\item otherwise $
\rho = {2(1 -\theta_{2}) (1 -\theta_{1})^{2} (1-\epsilon) \alpha_{k} \beta}/{\tilde{\kappa}^{2}}$,
\end{enumerate}
with $\tilde{\kappa}$ defined as in~\eqref{cond_w_wo_rep}.
Moreover, for both cases, the step size is at least
\begin{equation*}
\alpha_{k} \geq \frac{2(1-\theta_{2})(1- \beta)(1-\epsilon)}{\kappa},
\end{equation*}
where $\kappa$ is defined as in~\eqref{cond_F}.
\end{theorem}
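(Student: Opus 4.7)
The plan is to carry out the standard Armijo line-search analysis for Newton-type methods, and then exploit the two inexactness conditions~\eqref{global_p_inexact} and~\eqref{angle_inexact} in two different ways, matching the two cases of the theorem. First I would invoke Lemma~\ref{chernoff_lemma} to obtain, with probability $1-\delta$, the lower bound $\lambda_{\min}(H(\xx^{(k)})) \geq (1-\epsilon)\gamma$. The matching upper bound $\lambda_{\max}(H(\xx^{(k)})) \leq \tilde{\kappa}\gamma$ is deterministic and follows from~\eqref{convex_lipschitz}, the definition~\eqref{K_q} of $\widehat{K}_{q}$, and a worst-case argument over which indices could appear in $\mathcal{S}$ (different in the with- versus without-replacement cases). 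Everything thereafter is deterministic.

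For the step-size lower bound I would use the quadratic upper model $F(\xx + \alpha\pp) \leq F(\xx) + \alpha \pp^T \nabla F(\xx) + \tfrac{1}{2} \alpha^2 K \|\pp\|^2$ implied by~\eqref{F_strong_gen}; this shows that the Armijo inequality in~\eqref{global_alpha} is satisfied whenever $\alpha \leq 2(1-\beta)(-\pp_k^T \nabla F)/(K\|\pp_k\|^2)$. Combining~\eqref{angle_inexact} with the minimum-eigenvalue bound on $H$ gives $-\pp_k^T \nabla F \geq (1-\theta_2)(1-\epsilon)\gamma \|\pp_k\|^2$, which yields the stated $\alpha_k \geq 2(1-\theta_2)(1-\beta)(1-\epsilon)/\kappa$. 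Plugging the Armijo inequality into $F(\xx^{(k+1)}) - F(\xx^{*})$ and using the $\gamma$-strong convexity estimate $\|\nabla F\|^2 \geq 2\gamma(F(\xx^{(k)}) - F(\xx^{*}))$ reduces the desired rate~\eqref{global_linear_rate} to obtaining a lower bound of the form $-\pp_k^T \nabla F \geq c \|\nabla F\|^2/\gamma$ with $c$ as large as possible.

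For case (ii) I would apply~\eqref{angle_inexact} once more, lower bound $\pp_k^T H \pp_k \geq (1-\epsilon)\gamma\|\pp_k\|^2$, and combine with the reverse norm estimate $\|\pp_k\| \geq (1-\theta_1)\|\nabla F\|/(\tilde{\kappa}\gamma)$ obtained by taking norms in the identity $H \pp_k = -\nabla F + \rr_k$ (itself a consequence of~\eqref{global_p_inexact}). These routine steps produce exactly the stated $\rho$.

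The main obstacle is case (i): the argument just used in case (ii) loses a factor $\tilde{\kappa}$ and cannot deliver the claimed $\rho = \alpha_k \beta/\tilde{\kappa}$. The idea is instead to write $\pp_k = H^{-1}(-\nabla F + \rr_k)$ and expand $-\pp_k^T \nabla F = \nabla F^T H^{-1}\nabla F - \rr_k^T H^{-1}\nabla F$, then control the cross term by Cauchy--Schwarz in the $H^{-1}$ inner product: $|\rr_k^T H^{-1}\nabla F| \leq \sqrt{\rr_k^T H^{-1}\rr_k}\,\sqrt{\nabla F^T H^{-1}\nabla F}$. Using $\rr_k^T H^{-1}\rr_k \leq \theta_1^2\|\nabla F\|^2/((1-\epsilon)\gamma)$ (from~\eqref{global_p_inexact} and the minimum-eigenvalue bound) together with $\|\nabla F\|^2 \leq \tilde{\kappa}\gamma \cdot \nabla F^T H^{-1}\nabla F$, the cross term is at most $\theta_1\sqrt{\tilde{\kappa}/(1-\epsilon)}\cdot \nabla F^T H^{-1}\nabla F$. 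The threshold $\theta_1 \leq \sqrt{(1-\epsilon)/(4\tilde{\kappa})}$ is precisely what makes this at most one half of $\nabla F^T H^{-1}\nabla F$, leaving $-\pp_k^T \nabla F \geq \tfrac{1}{2}\nabla F^T H^{-1}\nabla F \geq \|\nabla F\|^2/(2\tilde{\kappa}\gamma)$ and the claimed $\rho = \alpha_k\beta/\tilde{\kappa}$.
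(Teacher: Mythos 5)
Your proposal is correct and follows essentially the same route as the paper's proof: the identical quadratic-upper-model argument for the step-size lower bound, the identical use of~\eqref{angle_inexact} plus the norm bound $(1-\theta_1)\|\nabla F\| \leq \widehat{K}_{|\mathcal{S}|}\|\pp_k\|$ for case (ii), and for case (i) the same decomposition of $-\pp_k^T\nabla F$ into $\nabla F^T H^{-1}\nabla F$ minus a cross term controlled to be at most half of it under the stated threshold on $\theta_1$. The only cosmetic difference is that you bound the cross term by Cauchy--Schwarz in the $H^{-1}$-inner product, whereas the paper uses the self-duality of the $\ell_2$ norm together with $\|H^{-1}\nabla F\| \leq \sqrt{\nabla F^T H^{-1}\nabla F/((1-\epsilon)\gamma)}$; these yield the same estimate.
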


\comment Theorem~\ref{global_newton_inexact} indicates that, in order to guarantee a faster convergence rate, the linear system needs to be solved to a ``small-enough'' accuracy, which is in the order of $\mathcal{O}(\sqrt{1/\tilde{\kappa}})$. In other words, the degree of accuracy inversely depends on the square root of the sub-sampling condition number and the larger the condition number, $\tilde{\kappa}$, the more accurately we need to solve the linear system. However, it is interesting to note that using a tolerance of order $\mathcal{O}(\sqrt{1/\tilde{\kappa}})$, we can still guarantee a similar global convergence rate as that of the algorithm with exact updates!

\comment The dependence of the guarantees of Theorem~\ref{global_newton_inexact} on the inexactness parameters, $\theta_{1}$ and $\theta_{2}$, is indeed intuitive. The minimum amount of decrease in the objective function is mainly dependent on $\theta_{1}$, i.e., the accuracy of the the linear system solve. On the other hand, the dependence of the step size, $\alpha_{k}$, on $\theta_{2}$ indicates that the algorithm can take larger steps along a search direction, $\pp_{k},$ that points more accurately towards the direction of the largest rate of decrease, i.e., $\pp_{k}^{T} \nabla F(\xx^{(k)})$ is more negative which means that $\pp_{k}$ points more accurately in the direction of $-\nabla F(\xx^{(k)})$. As a result, the more exactly we solve for $\pp_{k}$, the larger the step that the algorithm might take and the larger the decrease in the objective function. However, the cost of any such calculation at each iteration might be expensive. As a result, there is a trade-off between accuracy for computing the update in each iteration and the overall progress of the algorithm.

\subsection{Modifying the Sample Hessian}
\label{sec:modified_hessian}
As mentioned in the introduction, if $F$ is not strongly convex, it is still possible to obtain globally convergent algorithms. This is done through regularization of the Hessian of $F$ to ensure that the resulting search direction is indeed a direction of descent. Even when $F$ is strongly convex, the use of regularization can still be beneficial. Indeed, the lower bound for the step size in Theorems~\ref{global_newton} and~\ref{global_newton_inexact} imply that a small $\gamma$ can potentially slow down the \textit{initial} progress of the algorithm. More specifically, small $\gamma$ might force the algorithm to adopt small step sizes, at least in the early stages of the algorithm. This observation is indeed reinforced by the composite nature of error recursions obtained in SSN2~\cite{romassn2}. In particular, it has been shown in SSN2~\cite{romassn2} that in the early stages of the algorithm, when the iterates are far from the optimum, the error recursion is dominated by a quadratic term which transitions to a linear term as the iterates get closer to the optimum. However, unlike the linear term, the quadratic term is negatively affected by the small values of $\gamma$. In other words, small $\gamma$ can hinder the initial progress and even using the full Hessian cannot address this issue. As a result, one might resort to regularization of the (estimated) Hessian to improve upon the initial slow progress. Here, we explore two strategies for such regularization which are incorporated as part of our algorithms. The results are given for when~\eqref{global_p} is solved approximately with a ``small-enough'' tolerance and for the case of sub-sampling without replacement. Extensions to arbitrary tolerance as well as sampling with replacement is as before and straightforward. 

\subsubsection{Spectral Regularization}
\label{sec:spectral_reg}
In this section, we follow the ideas presented in~\cite{erdogdu2015convergence}, by accounting for such a potentially negative factor $\gamma$, through a regularization of eigenvalue distribution of the sub-sampled Hessian. 
More specifically, for some $\lambda \geq 0 $, let
\begin{subequations}
\begin{equation}
\hat{H} \defeq \mathcal{P}(\lambda; H),
\end{equation}
where $\mathcal{P}(\lambda; H)$ is an operator which is defined as
\begin{equation}
\mathcal{P}(\lambda; H) \defeq \lambda \mathbb{I} + \arg\max_{ X \succeq 0} \|H - \lambda \mathbb{I} - X\|_{F}.
\end{equation}
\label{spectral_proj}
\end{subequations}
The operation~\eqref{spectral_proj} can be equivalently represented as 
\begin{equation*}
\mathcal{P}(\lambda; H) = \sum_{i=1}^{p} \max \left \{ \lambda_{i}(H), \lambda \right\} \vv_{i} \vv_{i}^{T},
\end{equation*}
with $\vv_{i}$ being the $i^{th}$ eigenvector of $H$ corresponding to the $i^{th}$ eigenvalue, $\lambda_{i}(H)$. 
Operation~\eqref{spectral_proj} can be performed using truncated SVD (TSVD). Note that although TSVD can be done through standard methods, faster randomized alternatives exist which provide accurate approximations to TSVD much more efficiently~\cite{halko2011finding}. 

%

\begin{algorithm}
\caption{Globally Convergent Newton with Hessian Sub-Sampling and Spectral Regularization}
\begin{algorithmic}[1]
\STATE \textbf{Input:} $\xx^{(0)}$, $0 < \beta < 1$, $\widehat{\alpha} \geq 1$ 
\STATE - Set any sample size, $|\mathcal{S}| \geq 1$ 
\FOR {$k = 0,1,2, \cdots$ until termination} 
\STATE - Select a sample set, $\mathcal{S}$, of size $|\mathcal{S}|$ and form $H(\xx^{(k)})$ as in~\eqref{subsampled_H}
\STATE - Compute $\lambda_{\min}\left( H(\xx^{(k)}) \right)$ \label{truncated_SVD_1}
\STATE - Set $\lambda^{(k)} > \lambda_{\min}\left( H(\xx^{(k)}) \right)$ \label{truncated_SVD_2}
\STATE - $\hat{H}(\xx^{(k)})$ as in~\eqref{spectral_proj} with $\lambda$ \label{truncated_SVD_3}
\STATE - Update $\xx^{(k+1)}$ as in~\eqref{global_iteration} with $\hat{H}(\xx^{(k)})$ \label{truncated_SVD_4}
\ENDFOR
\end{algorithmic}
\label{alg_global_spectral}
\end{algorithm}

As a result of Steps~\ref{truncated_SVD_1}--\ref{truncated_SVD_3} in Algorithm~\ref{alg_global_spectral}, the regularized sub-sampled matrix, $\hat{H}(\xx^{(k)})$, is always positive definite with minimum eigenvalue $\lambda^{(k)} > 0$. Consequently, just to obtain global convergence, there is no need to resort to sampling Lemma~\ref{chernoff_lemma} to ensure invertibility of the sub-sampled matrix. In fact, such regularization guarantees the global convergence of Algorithm~\ref{alg_global_spectral}, even in the absence of strong convexity assumption~\eqref{F_strong_gen}. Theorem~\ref{global_newton_spectral} gives such a result for Algorithm~\ref{alg_global_spectral} in the case of inexact update. 

\begin{theorem}[Global Convergence of Algorithm~\ref{alg_global_spectral}: Arbitrary Sample Size]
\label{global_newton_spectral}
Let Assumption~\eqref{convex_lipschitz} hold and $0 < \theta_{2} < 1$ be given. Using Algorithm~\ref{alg_global_spectral} with sampling without replacement, for any $\xx^{(k)} \in  \mathbb{R}^{p}$ and the ``inexact'' update direction~\eqref{global_update_inexact} instead of~\eqref{global_p}, if 
	\begin{equation*}
\theta_{1}^{(k)} \leq \hf \sqrt{\frac{\lambda^{(k)}}{\max \{\lambda^{(k)},\widehat{K}_{|\mathcal{S}|} \}}},
\end{equation*}
we have 
\begin{equation*}
F(\xx^{(k+1)}) \leq F(\xx^{(k)}) - \frac{\alpha_{k} \beta}{2 \max\left\{ \widehat{K}_{|\mathcal{S}|}, \lambda^{(k)} \right\}} \|\nabla F(\xx^{(k)})\|^{2},
\end{equation*}
where $\widehat{K}_{|\mathcal{S}|}$ is defined as in~\eqref{K_q}.  
If in addition, Assumption~\eqref{F_strong_gen} holds, then we have~\eqref{global_linear_rate} with 
\begin{equation*}
\rho = \frac{\alpha_{k} \beta \gamma }{\max\left\{ \widehat{K}_{|\mathcal{S}|}, \lambda^{(k)} \right\}}.
\end{equation*}
Moreover, for both cases, the step size is at least
\begin{equation*}
\alpha_{k} \geq \frac{2(1-\theta_{2}) (1-\beta) \lambda^{(k)} }{K}.
\end{equation*}
\end{theorem}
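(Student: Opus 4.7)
The plan is to prove three things in turn: (i) that the spectrally regularized matrix $\hat{H}(\xx^{(k)})$ has tight two-sided spectrum bounds $\lambda^{(k)} \mathbb{I} \preceq \hat{H}(\xx^{(k)}) \preceq \max\{\widehat{K}_{|\mathcal{S}|},\lambda^{(k)}\}\,\mathbb{I}$, (ii) that, under the stated tolerance bound on $\theta_{1}^{(k)}$, the inexact update satisfies the key curvature-free descent inequality $-\pp_{k}^{T}\nabla F(\xx^{(k)}) \geq \|\nabla F(\xx^{(k)})\|^{2}/(2\max\{\widehat{K}_{|\mathcal{S}|},\lambda^{(k)}\})$, and (iii) that the Armijo rule, combined with condition~\eqref{angle_inexact} and the smoothness of $F$, forces $\alpha_{k}$ to lie above the stated threshold. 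The linear rate under~\eqref{F_strong_gen} will then follow from the usual Polyak--{\L}ojasiewicz bound $\|\nabla F(\xx^{(k)})\|^{2} \geq 2\gamma(F(\xx^{(k)})-F(\xx^{*}))$.

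For (i), by definition $\hat{H}(\xx^{(k)}) = \sum_{i}\max\{\lambda_{i}(H),\lambda^{(k)}\}\vv_{i}\vv_{i}^{T}$, so its smallest eigenvalue is at least $\lambda^{(k)}$ and its largest eigenvalue is at most $\max\{\|H(\xx^{(k)})\|,\lambda^{(k)}\}$. When sampling without replacement, Assumption~\eqref{convex_lipschitz} yields $\|H(\xx^{(k)})\| \leq \tfrac{1}{|\mathcal{S}|}\sum_{j\in\mathcal{S}}K_{j} \leq \widehat{K}_{|\mathcal{S}|}$ by the definition~\eqref{K_q}. For (ii), write $\hat{H}\pp_{k} = -\bgg + \rr$ with $\|\rr\|\leq \theta_{1}^{(k)}\|\bgg\|$, where $\bgg = \nabla F(\xx^{(k)})$. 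Then $-\pp_{k}^{T}\bgg = \bgg^{T}\hat{H}^{-1}\bgg - \rr^{T}\hat{H}^{-1}\bgg$, and Cauchy--Schwarz in the $\hat{H}^{-1}$ inner product gives
\begin{equation*}
-\pp_{k}^{T}\bgg \;\geq\; \sqrt{\bgg^{T}\hat{H}^{-1}\bgg}\,\Bigl(\sqrt{\bgg^{T}\hat{H}^{-1}\bgg}-\sqrt{\rr^{T}\hat{H}^{-1}\rr}\Bigr).
\end{equation*}
Using (i), $\bgg^{T}\hat{H}^{-1}\bgg \geq \|\bgg\|^{2}/\max\{\widehat{K}_{|\mathcal{S}|},\lambda^{(k)}\}$ and $\rr^{T}\hat{H}^{-1}\rr \leq (\theta_{1}^{(k)})^{2}\|\bgg\|^{2}/\lambda^{(k)}$; the hypothesis $\theta_{1}^{(k)} \leq \tfrac{1}{2}\sqrt{\lambda^{(k)}/\max\{\widehat{K}_{|\mathcal{S}|},\lambda^{(k)}\}}$ makes the second radical at most half the first, producing the claimed $\|\bgg\|^{2}/(2\max\{\widehat{K}_{|\mathcal{S}|},\lambda^{(k)}\})$. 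The first conclusion of the theorem then follows immediately from the Armijo inequality $F(\xx^{(k+1)})\leq F(\xx^{(k)})+\alpha_{k}\beta\pp_{k}^{T}\bgg$ guaranteed by~\eqref{global_alpha}.

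For (iii), smoothness of $F$ with $\nabla^{2}F \preceq K\mathbb{I}$ gives $F(\xx^{(k)}+\alpha\pp_{k}) \leq F(\xx^{(k)}) + \alpha \pp_{k}^{T}\bgg + \tfrac{K\alpha^{2}}{2}\|\pp_{k}\|^{2}$, so the Armijo condition~\eqref{global_alpha} holds whenever $(1-\beta)(-\pp_{k}^{T}\bgg) \geq \tfrac{K\alpha}{2}\|\pp_{k}\|^{2}$. Condition~\eqref{angle_inexact} combined with (i) gives $-\pp_{k}^{T}\bgg \geq (1-\theta_{2})\pp_{k}^{T}\hat{H}\pp_{k} \geq (1-\theta_{2})\lambda^{(k)}\|\pp_{k}\|^{2}$, hence any $\alpha \leq 2(1-\theta_{2})(1-\beta)\lambda^{(k)}/K$ is feasible in~\eqref{global_alpha}, yielding the claimed lower bound on $\alpha_{k}$ (taking $\widehat{\alpha}\geq 1$ no smaller than this). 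Finally, when~\eqref{F_strong_gen} additionally holds, the P--{\L} inequality converts the descent inequality into $F(\xx^{(k+1)})-F(\xx^{*}) \leq (1-\rho)(F(\xx^{(k)})-F(\xx^{*}))$ with the stated $\rho$.

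I expect the only nonroutine step to be (ii): a naive Cauchy--Schwarz on $\rr^{T}\hat{H}^{-1}\bgg$ gives a ratio $\theta_{1}/\lambda^{(k)}$, demanding the much stronger tolerance $\theta_{1} \lesssim \lambda^{(k)}/\widehat{K}_{|\mathcal{S}|}$; it is essential to use the $\hat{H}^{-1}$-weighted Cauchy--Schwarz so that the bound involves $\sqrt{\lambda^{(k)}/\max\{\widehat{K}_{|\mathcal{S}|},\lambda^{(k)}\}}$ and matches the square-root tolerance stated in the hypothesis. Everything else is a standard Armijo/smoothness/strong-convexity assembly.
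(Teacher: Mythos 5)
Your proposal is correct and follows essentially the same route as the paper: establish $\lambda^{(k)}\mathbb{I}\preceq\hat{H}\preceq\max\{\widehat{K}_{|\mathcal{S}|},\lambda^{(k)}\}\mathbb{I}$, use~\eqref{angle_inexact} with the smoothness bound $\nabla^{2}F\preceq K\mathbb{I}$ to get the step-size floor, and use~\eqref{global_p_inexact} with the half-residual condition to get $-\pp_{k}^{T}\nabla F(\xx^{(k)})\geq\tfrac12\nabla F(\xx^{(k)})^{T}\hat{H}^{-1}\nabla F(\xx^{(k)})$, finishing with the Polyak--{\L}ojasiewicz inequality. Your $\hat{H}^{-1}$-weighted Cauchy--Schwarz is algebraically equivalent to the paper's plain Cauchy--Schwarz followed by the bound $\|\hat{H}^{-1}\nabla F\|\leq\sqrt{(\nabla F^{T}\hat{H}^{-1}\nabla F)/\lambda^{(k)}}$ (so the weighted form is not strictly ``essential,'' contrary to your closing remark, though it is a clean way to see the square-root tolerance).
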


If Assumption~\eqref{F_strong_gen} holds, the main issue with using arbitrary sample size is that if $|\mathcal{S}|$ is not large enough, then $H(\xx^{(k)})$ might be (nearly) singular, i.e., $\lambda_{\min} (H(\xx^{(k)})) \approx 0$. This issue will in turn necessitate a heavier regularization, i.e., larger $\lambda^{(k)}$. Otherwise, having $\lambda^{(k)} \ll 1$ implies a more accurate update, i.e., smaller $\theta_{1}^{(k)}$, and/or a smaller step-size (see the upper bound for $\theta_{1}^{(k)}$ and the lower bound for $\alpha_{k}$ in Theorem~\ref{global_newton_spectral}). As a result, if Assumption~\eqref{F_strong_gen} holds, it might be beneficial to use Lemma~\ref{chernoff_lemma} to, at least, guarantee that the sufficient upper bound for $\theta_{1}$ is always bounded away from zero and minimum step-size is independent of regularization. Indeed, in Corollary~\ref{global_newton_spectral_2} since $\lambda^{(k)} > (1-\epsilon) \gamma$, the sufficient upper bound for the inexactness tolerance, $\theta_{1}^{(k)}$, is always larger than $0.5\sqrt{(1-\epsilon) /\tilde{\kappa}}$.

\begin{corollary}[Global Convergence of Algorithm~\ref{alg_global_spectral}: Sampling as in Lemma~\ref{chernoff_lemma}]
\label{global_newton_spectral_2}
Let Assumptions~\eqref{strong_convex_boundedness} hold. Also let $0 < \epsilon < 1$, $0 < \delta < 1$ and $0 < \theta_{2} < 1$ be given. For any $\xx^{(k)} \in  \mathbb{R}^{p}$, using Algorithm~\ref{alg_global_spectral} with $\mathcal{S}$ chosen without replacement as in Lemma~\ref{chernoff_lemma}, and the ``inexact'' update direction~\eqref{global_update_inexact} instead of~\eqref{global_p}, if 
	\begin{equation*}
\theta_{1}^{(k)} \leq \hf \sqrt{\frac{\lambda^{(k)}}{\max \{\lambda^{(k)},\widehat{K}_{|\mathcal{S}|} \}}},
\end{equation*} 
we have~\eqref{global_linear_rate} with 
\begin{equation*}
\rho = \frac{\alpha_{k} \beta \gamma }{\max\left\{ \widehat{K}_{|\mathcal{S}|}, \lambda^{(k)} \right\}},
\end{equation*}
where $\widehat{K}_{|\mathcal{S}|}$ is defined as in~\eqref{K_q}.
Moreover, with probability $1-\delta$, the step size is at least
\begin{equation*}
\alpha_{k} \geq \frac{2(1-\theta_{2}) (1-\beta) (1-\epsilon)}{\kappa}.
\end{equation*}
\end{corollary}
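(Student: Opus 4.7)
The plan is to derive Corollary~\ref{global_newton_spectral_2} as a direct consequence of Theorem~\ref{global_newton_spectral} combined with the probabilistic eigenvalue lower bound furnished by Lemma~\ref{chernoff_lemma}. In essence, the corollary is simply the strong-convexity branch of Theorem~\ref{global_newton_spectral}, with the step-size lower bound rewritten in terms of the problem condition number $\kappa$ rather than the algorithmic regularization parameter $\lambda^{(k)}$; the new sampling hypothesis is precisely what enables this rewriting.

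The first step is to invoke Lemma~\ref{chernoff_lemma}: since $\mathcal{S}$ is drawn uniformly without replacement with size satisfying~\eqref{uniform_sample_size_Chernoff}, the bound $\lambda_{\min}(H(\xx^{(k)})) \geq (1-\epsilon)\gamma$ holds with probability at least $1-\delta$. Because Line~\ref{truncated_SVD_2} of Algorithm~\ref{alg_global_spectral} prescribes $\lambda^{(k)} > \lambda_{\min}(H(\xx^{(k)}))$, on this event we obtain $\lambda^{(k)} > (1-\epsilon)\gamma > 0$, a lower bound that no longer depends on the particular choice of $\lambda^{(k)}$ made inside the algorithm.

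The second step is to apply the strong-convexity branch of Theorem~\ref{global_newton_spectral}. Its hypotheses (assumptions~\eqref{strong_convex_boundedness}, the prescribed bound on $\theta_{1}^{(k)}$, and the inexact update~\eqref{global_update_inexact}) are all inherited from the corollary's statement, so the theorem delivers~\eqref{global_linear_rate} with $\rho = \alpha_{k}\beta\gamma/\max\{\widehat{K}_{|\mathcal{S}|},\lambda^{(k)}\}$ exactly as asserted, together with the generic step-size bound $\alpha_{k} \geq 2(1-\theta_{2})(1-\beta)\lambda^{(k)}/K$. Substituting the high-probability lower bound $\lambda^{(k)} > (1-\epsilon)\gamma$ and using $K/\gamma = \kappa$ converts the latter into $\alpha_{k} \geq 2(1-\theta_{2})(1-\beta)(1-\epsilon)/\kappa$, which is valid on the same $1-\delta$ event. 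There is no genuine obstacle here, only some probabilistic bookkeeping: the $1-\delta$ qualifier attaches to the step-size statement because that is where the lower bound on $\lambda^{(k)}$ is actually used, whereas the stated linear rate $\rho$ already involves $\lambda^{(k)}$ directly and therefore holds deterministically once the algorithm has produced its iterate.
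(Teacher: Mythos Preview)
Your proposal is correct and matches the paper's intended argument: the paper does not give a separate formal proof of this corollary, but the surrounding text makes clear that it is meant to follow immediately from Theorem~\ref{global_newton_spectral} together with Lemma~\ref{chernoff_lemma}, via the observation $\lambda^{(k)} > \lambda_{\min}(H(\xx^{(k)})) \geq (1-\epsilon)\gamma$ on the high-probability event. Your bookkeeping about which conclusion is deterministic and which requires the $1-\delta$ event is also accurate and consistent with how the corollary is stated.
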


\subsubsection{Ridge Regularization}
\label{sec:ridge}
As an alternative to the spectral regularization, we can consider the following simple ridge-type regularization 
\begin{equation}
\hat{H}(\xx) \defeq H(\xx) + \lambda \mathbb{I},
\label{ridge}
\end{equation}
for some $\lambda \geq 0$, similar to  the Levenberg-Marquardt type algorithms~\cite{marquardt1963algorithm}. Such regularization might be preferable to the spectral regularization of Section~\ref{sec:spectral_reg}, as it avoids the projection operation~\ref{spectral_proj} at every iteration. Theorem~\ref{global_newton_ridge} gives a global convergence guarantee for Algorithm~\ref{alg_global_ridge} in the case of inexact update.

\begin{algorithm}
\caption{Globally Convergent Newton with Hessian Sub-Sampling and Ridge Regularization}
\begin{algorithmic}[1]
\STATE \textbf{Input:} $\xx^{(0)}$, $0 < \beta < 1$, $\widehat{\alpha} \geq 1$, $\lambda > 0$
\STATE - Set any sample size, $|\mathcal{S}| \geq 1$
\FOR {$k = 0,1,2, \cdots$ until termination} 
\STATE - Select a sample set, $\mathcal{S}$, of size $|\mathcal{S}|$ and form $H(\xx^{(k)})$ as in~\eqref{subsampled_H}
\STATE - $\hat{H}(\xx^{(k)})$ as in~\eqref{ridge} with $\lambda$
\STATE - Update $\xx^{(k+1)}$ as in~\eqref{global_iteration} with $\hat{H}(\xx^{(k)})$
\ENDFOR
\end{algorithmic}
\label{alg_global_ridge}
\end{algorithm}

\begin{theorem}[Global Convergence of Algorithm~\ref{alg_global_ridge}: Arbitrary Sample Size]
\label{global_newton_ridge}
Let Assumption~\eqref{convex_lipschitz} hold and $0 < \theta_{2} < 1$ be given. Using Algorithm~\ref{alg_global_ridge} with sampling without replacement, for any $\xx^{(k)} \in  \mathbb{R}^{p}$ and the ``inexact'' update direction~\eqref{global_update_inexact} instead of~\eqref{global_p}, if 
	\begin{equation*}
\theta_{1} \leq \hf \sqrt{\frac{\lambda}{K+\lambda}},
\end{equation*}
then
\begin{equation*}
F(\xx^{(k+1)}) \leq F(\xx^{(k)}) - \frac{\alpha_{k} \beta}{2\left( \widehat{K}_{|\mathcal{S}|} + \lambda \right)} \|\nabla F(\xx^{(k)})\|^{2},
\end{equation*}  
where $\widehat{K}_{|\mathcal{S}|}$ is defined as in~\eqref{K_q}.
If, in addition, Assumption~\eqref{F_strong_gen} holds, then we have~\eqref{global_linear_rate} with 
\begin{equation*}
\rho = \frac{\alpha_{k} \beta \gamma}{ \widehat{K}_{|\mathcal{S}|} + \lambda}.
\end{equation*}
Moreover, for both cases, the step size is at least
\begin{equation*}
\alpha_{k} \geq \frac{2(1-\theta_{2})(1-\beta) \lambda }{K}.
\end{equation*}
\end{theorem}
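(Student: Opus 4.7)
The plan is to exploit the simple deterministic spectral bounds on $\hat{H}(\xx^{(k)}) = H(\xx^{(k)}) + \lambda\mathbb{I}$ that the ridge regularization supplies, so that no Hessian concentration argument is required and the sample size can be arbitrary. Since sampling is without replacement, Assumption~\eqref{convex_lipschitz} together with the worst-case definition~\eqref{K_q} yield, deterministically, $0 \preceq H(\xx^{(k)}) \preceq \widehat{K}_{|\mathcal{S}|}\mathbb{I}$, hence $\lambda\mathbb{I} \preceq \hat{H}(\xx^{(k)}) \preceq (\widehat{K}_{|\mathcal{S}|} + \lambda)\mathbb{I}$. In particular $\hat{H}(\xx^{(k)})$ is invertible, so the inexact system~\eqref{global_update_inexact} always admits a solution and no appeal to Lemma~\ref{chernoff_lemma} is needed.

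Next I would analyse the inexact direction. Writing $\hat{H}(\xx^{(k)})\pp_{k} = -\nabla F(\xx^{(k)}) + \ee$ with $\|\ee\| \leq \theta_{1}\|\nabla F(\xx^{(k)})\|$ yields
\[
-\pp_{k}^{T}\nabla F(\xx^{(k)}) \;=\; \nabla F^{T}\hat{H}^{-1}\nabla F \;-\; \ee^{T}\hat{H}^{-1}\nabla F.
\]
Applying Cauchy--Schwarz in the $\hat{H}^{-1}$-inner product gives $|\ee^{T}\hat{H}^{-1}\nabla F| \leq \sqrt{\ee^{T}\hat{H}^{-1}\ee}\,\sqrt{\nabla F^{T}\hat{H}^{-1}\nabla F} \leq (\theta_{1}\|\nabla F\|/\sqrt{\lambda})\sqrt{y}$, where $y \defeq \nabla F^{T}\hat{H}^{-1}\nabla F \geq \|\nabla F\|^{2}/(\widehat{K}_{|\mathcal{S}|}+\lambda)$. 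The hypothesis $\theta_{1} \leq \tfrac{1}{2}\sqrt{\lambda/(K+\lambda)}$, combined with $\widehat{K}_{|\mathcal{S}|} \leq K$, forces $(\theta_{1}\|\nabla F\|/\sqrt{\lambda})\sqrt{y} \leq y/2$, delivering the key inequality
\[
-\pp_{k}^{T}\nabla F(\xx^{(k)}) \;\geq\; \tfrac{y}{2} \;\geq\; \frac{\|\nabla F(\xx^{(k)})\|^{2}}{2(\widehat{K}_{|\mathcal{S}|}+\lambda)}.
\]
The one subtle point is here: a Cauchy--Schwarz bound in the Euclidean inner product instead of the $\hat{H}^{-1}$-one would replace the square-root threshold $\sqrt{\lambda/(K+\lambda)}$ on $\theta_{1}$ by the much weaker linear quantity $\lambda/(K+\lambda)$, so this weighted inequality is what produces the sharper tolerance stated in the theorem.

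For the step-size bound, Assumption~\eqref{convex_lipschitz} implies that $F$ has $K$-Lipschitz gradient, so the descent lemma gives $F(\xx^{(k)}+\alpha\pp_{k}) \leq F(\xx^{(k)}) + \alpha\pp_{k}^{T}\nabla F(\xx^{(k)}) + (\alpha^{2}K/2)\|\pp_{k}\|^{2}$. The Armijo test in~\eqref{global_alpha} is therefore satisfied whenever $(1-\beta)\pp_{k}^{T}\nabla F(\xx^{(k)}) + (\alpha K/2)\|\pp_{k}\|^{2} \leq 0$; combining this with the angle condition~\eqref{angle_inexact} and the spectral lower bound $\pp_{k}^{T}\hat{H}(\xx^{(k)})\pp_{k} \geq \lambda\|\pp_{k}\|^{2}$ produces the claimed $\alpha_{k} \geq 2(1-\theta_{2})(1-\beta)\lambda/K$.

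Combining the two displays yields the first claim $F(\xx^{(k+1)}) \leq F(\xx^{(k)}) - \alpha_{k}\beta\|\nabla F(\xx^{(k)})\|^{2}/(2(\widehat{K}_{|\mathcal{S}|}+\lambda))$. Under the added strong-convexity Assumption~\eqref{F_strong_gen}, the Polyak--Lojasiewicz inequality $\|\nabla F(\xx^{(k)})\|^{2} \geq 2\gamma(F(\xx^{(k)}) - F(\xx^{*}))$ (a standard consequence of $\gamma$-strong convexity) converts this absolute decrease into the multiplicative form~\eqref{global_linear_rate} with $\rho = \alpha_{k}\beta\gamma/(\widehat{K}_{|\mathcal{S}|}+\lambda)$, completing the proof.
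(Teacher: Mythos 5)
Your argument tracks the paper's proof almost step for step: the deterministic sandwich $\lambda \mathbb{I} \preceq \hat{H}(\xx^{(k)}) \preceq (\widehat{K}_{|\mathcal{S}|}+\lambda)\mathbb{I}$, the weighted Cauchy--Schwarz treatment of the residual in~\eqref{global_p_inexact} to obtain $-\pp_{k}^{T}\nabla F(\xx^{(k)}) \geq \tfrac12 \nabla F(\xx^{(k)})^{T}[\hat{H}(\xx^{(k)})]^{-1}\nabla F(\xx^{(k)})$, the descent lemma combined with the angle condition~\eqref{angle_inexact} for the step-size floor, and the strong-convexity inequality $F(\xx)-F(\xx^{*}) \leq \|\nabla F(\xx)\|^{2}/(2\gamma)$ for the linear rate. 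The paper reaches the same displays by reducing to the proof of Theorem~\ref{global_newton_inexact} with $[\hat{H}]^{-1} \preceq \lambda^{-1}\mathbb{I}$ and $\nabla F^{T}[\hat{H}]^{-1}\nabla F \geq \|\nabla F\|^{2}/(\widehat{K}_{|\mathcal{S}|}+\lambda)$; your ``note on the weighted inner product'' is exactly the bound $\|[\hat{H}]^{-1}\nabla F\| \leq \sqrt{\lambda^{-1}\,\nabla F^{T}[\hat{H}]^{-1}\nabla F}$ used there.

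There is, however, one step that does not go through as written: you invoke $\widehat{K}_{|\mathcal{S}|} \leq K$ to pass from the stated hypothesis $\theta_{1} \leq \tfrac12\sqrt{\lambda/(K+\lambda)}$ to the condition your derivation actually requires, namely $\theta_{1} \leq \tfrac12\sqrt{\lambda/(\widehat{K}_{|\mathcal{S}|}+\lambda)}$. That inequality is backwards. The paper's own remark following~\eqref{K_q} is that $\kappa \leq \kappa_{|\mathcal{S}|}$, i.e., $K \leq \widehat{K}_{|\mathcal{S}|}$: the tightest $K$ with $\nabla^{2}F \preceq K\mathbb{I}$ is at most $\tfrac1n\sum_{i}K_{i} = \widehat{K}_{n} \leq \widehat{K}_{|\mathcal{S}|}$, since $\widehat{K}_{q}$ averages the $q$ \emph{largest} $K_{i}$'s and is nonincreasing in $q$; a sub-sample can easily have $\|H(\xx)\|$ larger than $K$. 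Consequently $\tfrac12\sqrt{\lambda/(K+\lambda)} \geq \tfrac12\sqrt{\lambda/(\widehat{K}_{|\mathcal{S}|}+\lambda)}$, so the hypothesis is the \emph{weaker} of the two conditions and does not imply the one you need; your bridge fails precisely where you claim it is the ``subtle point.'' To be fair, the same discrepancy is latent in the theorem statement itself: the paper's proof also produces the threshold with $\widehat{K}_{|\mathcal{S}|}+\lambda$ in the denominator (compare the $\max\{\lambda^{(k)},\widehat{K}_{|\mathcal{S}|}\}$ appearing in Theorem~\ref{global_newton_spectral} and the $\widehat{K}_{|\mathcal{S}|}+\lambda$ in Corollary~\ref{global_newton_ridge_2}). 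The honest fix is to state and use the hypothesis $\theta_{1} \leq \tfrac12\sqrt{\lambda/(\widehat{K}_{|\mathcal{S}|}+\lambda)}$, not to assert $\widehat{K}_{|\mathcal{S}|} \leq K$. Everything else in your write-up is correct.
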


As mentioned before in Section~\ref{sec:spectral_reg}, under Assumption~\eqref{F_strong_gen}, if the sample size $|\mathcal{S}|$ is not chosen large enough then $H(\xx^{(k)})$ might (nearly) be singular. This can in turn cause a need for a larger $\lambda$ or, alternatively, if $\lambda \ll 1$, the accuracy tolerance $\theta_{1}$ and the step-size $\alpha_{k}$ can be very small. However, by using the sample size given by Lemma~\ref{chernoff_lemma}, one can probabilistically guarantee a minimum step-size as well as a minimum sufficient upper bound for $\theta_{1}$ which are bounded away from zero even if $\lambda = 0$.

\begin{corollary}[Global Convergence of Algorithm~\ref{alg_global_ridge}: Sampling as in Lemma~\ref{chernoff_lemma}]
\label{global_newton_ridge_2}
Let Assumptions~\eqref{strong_convex_boundedness} hold. Also let $0 < \epsilon < 1$, $0 < \delta < 1$ and $0 < \theta_{2} < 1$ be given. Using Algorithm~\ref{alg_global_ridge} with $\mathcal{S}$ chosen without replacement as in Lemma~\ref{chernoff_lemma}, for any $\xx^{(k)} \in  \mathbb{R}^{p}$ and the ``inexact'' update direction~\eqref{global_update_inexact} instead of~\eqref{global_p}, if 
\begin{equation*}
\theta_{1} \leq \hf \sqrt{\frac{(1-\epsilon)\gamma + \lambda}{\widehat{K}_{|\mathcal{S}|} + \lambda}},
\end{equation*} 
we have~\eqref{global_linear_rate} with 
\begin{equation*}
\rho = \frac{\alpha_{k} \beta \gamma }{\widehat{K}_{|\mathcal{S}|} +\lambda },
\end{equation*}
where $\widehat{K}_{|\mathcal{S}|}$ is defined as in~\eqref{K_q}.
Moreover, with probability $1-\delta$, the step size is at least
\begin{equation*}
\alpha_{k} \geq \frac{2(1-\theta_{2}) (1-\beta) ((1-\epsilon)\gamma+\lambda)}{K}.
\end{equation*}
\end{corollary}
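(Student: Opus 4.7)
The plan is to reduce Corollary~\ref{global_newton_ridge_2} to Theorem~\ref{global_newton_ridge} by sharpening the deterministic lower bound $\lambda$ on $\lambda_{\min}(\hat{H}(\xx^{(k)}))$ using the probabilistic guarantee from Lemma~\ref{chernoff_lemma}. First, I would invoke Lemma~\ref{chernoff_lemma} with the prescribed $\epsilon,\delta$: with probability at least $1-\delta$ we have $\lambda_{\min}(H(\xx^{(k)})) \geq (1-\epsilon)\gamma$, so after ridge regularization
\begin{equation*}
\lambda_{\min}(\hat{H}(\xx^{(k)})) \geq (1-\epsilon)\gamma + \lambda.
\end{equation*}
On the same event, since we are sampling without replacement and each $\nabla^{2} f_{j}(\xx) \preceq K_{j}\mathbb{I}$, a standard convex-combination argument gives the deterministic upper bound $\lambda_{\max}(\hat{H}(\xx^{(k)})) \leq \widehat{K}_{|\mathcal{S}|} + \lambda$. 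These two spectral bounds are the only quantitative inputs used by the Theorem~\ref{global_newton_ridge} analysis.

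Next, I would replay the proof of Theorem~\ref{global_newton_ridge} verbatim, but with the number playing the role of ``$\lambda_{\min}(\hat{H}(\xx^{(k)}))$'' replaced by $(1-\epsilon)\gamma+\lambda$ rather than $\lambda$, and the number ``$\lambda_{\max}(\hat{H}(\xx^{(k)}))$'' replaced by $\widehat{K}_{|\mathcal{S}|}+\lambda$ rather than $K+\lambda$. The residual-plus-angle conditions~\eqref{global_update_inexact} combined with the tolerance
$\theta_{1} \leq \hf\sqrt{((1-\epsilon)\gamma+\lambda)/(\widehat{K}_{|\mathcal{S}|}+\lambda)}$
control the cross term $\|H(\xx^{(k)})\pp_{k}+\nabla F(\xx^{(k)})\|$ against $\lambda_{\min}(\hat{H}(\xx^{(k)}))\|\pp_{k}\|$ and deliver, exactly as in Theorem~\ref{global_newton_ridge}, the sufficient-decrease inequality
\begin{equation*}
F(\xx^{(k+1)}) \leq F(\xx^{(k)}) - \frac{\alpha_{k}\beta}{2(\widehat{K}_{|\mathcal{S}|}+\lambda)}\|\nabla F(\xx^{(k)})\|^{2}.
\end{equation*}
Combining this with the Polyak--{\L}ojasiewicz-type bound $\|\nabla F(\xx^{(k)})\|^{2}\geq 2\gamma(F(\xx^{(k)})-F(\xx^{*}))$ from strong convexity~\eqref{F_strong_gen} yields~\eqref{global_linear_rate} with the claimed rate $\rho = \alpha_{k}\beta\gamma/(\widehat{K}_{|\mathcal{S}|}+\lambda)$.

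Third, I would derive the step-size lower bound from a standard Armijo analysis. Using the $K$-smoothness of $F$ (from~\eqref{F_strong_gen}) in the quadratic upper model of $F(\xx^{(k)}+\alpha\pp_{k})$ and comparing with the Armijo condition in~\eqref{global_alpha}, one obtains acceptance of every $\alpha \leq 2(1-\beta)(-\pp_{k}^{T}\nabla F(\xx^{(k)}))/(K\|\pp_{k}\|^{2})$. The angle condition~\eqref{angle_inexact} then gives $-\pp_{k}^{T}\nabla F(\xx^{(k)}) \geq (1-\theta_{2})\lambda_{\min}(\hat{H}(\xx^{(k)}))\|\pp_{k}\|^{2}$, and plugging in the sharper lower bound $(1-\epsilon)\gamma+\lambda$ yields the claimed $\alpha_{k} \geq 2(1-\theta_{2})(1-\beta)((1-\epsilon)\gamma+\lambda)/K$.

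The main ``obstacle,'' such as it is, is purely bookkeeping: one has to verify that every appearance of $\lambda_{\min}(\hat{H}(\xx^{(k)}))$ in the proof of Theorem~\ref{global_newton_ridge} is being replaced consistently by the improved probabilistic lower bound $(1-\epsilon)\gamma+\lambda$ (as opposed to just $\lambda$), while every appearance of the upper eigenvalue continues to be $\widehat{K}_{|\mathcal{S}|}+\lambda$, so that the tolerance threshold on $\theta_{1}$ and the step-size bound indeed take the stated sharper form. Since the probabilistic event from Lemma~\ref{chernoff_lemma} is the only stochastic ingredient, all conclusions hold with probability at least $1-\delta$, matching the statement.
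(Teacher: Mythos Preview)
Your proposal is correct and matches the paper's (implicit) approach: the paper gives no separate proof for this corollary, treating it as the immediate consequence of rerunning the Theorem~\ref{global_newton_ridge} argument with the sharper lower eigenvalue bound $\lambda_{\min}(\hat H(\xx^{(k)}))\geq(1-\epsilon)\gamma+\lambda$ supplied by Lemma~\ref{chernoff_lemma}, together with the deterministic upper bound $\widehat{K}_{|\mathcal S|}+\lambda$. Your bookkeeping is exactly what is needed.
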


\comment In both regularization methods of Sections~\ref{sec:spectral_reg} and~\ref{sec:ridge} , as the parameter $\lambda$ gets larger, the methods behaves more like gradient descent. For example, using the regularization of Section~\ref{sec:spectral_reg}, for $\lambda = \widehat{K}_{|\mathcal{S}|}$, we have that $\hat{H}(\xx^{(k)}) = \widehat{K}_{|\mathcal{S}|} \mathbb{I}$, and the method is exactly gradient descent. As a result, a method with heavier regularization might not benefit from more accurate sub-sampling. Hence, at early stages of the iterations, it might be better to have small sample size with heavier regularization, while as the iterations get closer to the optimum, larger sample size with lighter regularization might be beneficial. 

\section{Sub-Sampling Hessian \& Gradient}
\label{sec:subsampl_newton_hess_grad}
In order to compute the update $\xx^{(k+1)}$ in Section~\ref{sec:subsampl_newton_hess}, full gradient was used. In many problems, this can be a major bottleneck and reduction in computational costs can be made by considering sub-sampling the gradient as well. This issue arises more prominently in high dimensional settings where $n , p \gg 1$ and evaluating the full gradient at each iteration can pose a significant challenge. In such problems, sub-sampling the gradient can, at times, drastically reduce the computational complexity of many problems. 

Consider selecting a sample collection from $\{1,2,\ldots,n\}$, uniformly at random with replacement and let 
\begin{equation}
\bgg(\xx) \defeq \frac{1}{|\mathcal{S}|} \sum_{j \in \mathcal{S}} \nabla f_{j}(\xx),
\label{subsampled_G}
\end{equation}
be the sub-sampled gradient. As mentioned before in the requirement~\hyperref[invertibility]{(R.2)}, we need to ensure that sampling is done in a way to keep as much of the first order information from the full gradient as possible. 

By a simple observation, the gradient $\nabla F(\xx)$ can be written in \textit{matrix-matrix product} from as 
\begin{eqnarray*}
\nabla F(\xx) = \begin{pmatrix}
\mid & \mid & & \mid \\
\nabla f_{1}(\xx) & \nabla f_{2}(\xx) & \cdots & \nabla f_{n}(\xx)\\
\mid & \mid & & \mid \\
\end{pmatrix} \begin{pmatrix}1/n\\1/n \\ \vdots \\ 1/n\end{pmatrix}.
\end{eqnarray*}
Hence, we can use approximate matrix multiplication results as a fundamental primitive in RandNLA~\cite{mahoney2011randomized,drineas2006fast}, to probabilistically control the error in approximation of $\nabla F(\xx)$ by $\bgg(\xx)$, through uniform sampling of the columns and rows of the involved matrices above. As a result, we have the following lemma (a more general form of this lemma for the case of constrained optimization is given in the companion paper, SSN2~\cite[Lemma 4]{romassn2}). 

\begin{lemma}[Uniform Gradient Sub-Sampling]
\label{randnla_lemma}
For a given $\xx \in \mathbb{R}^{p}$, let 
\begin{equation*}
\|\nabla f_{i}(\xx) \| \leq G(\xx), \quad i =1,2,\ldots,n.
\end{equation*} 
For any $0 < \epsilon < 1$ and $0 < \delta < 1$, if
\begin{equation}
|\mathcal{S}| \geq \frac{G(\xx)^{2}}{\epsilon^{2}} \Big( 1 + \sqrt{8 \ln \frac{1}{\delta}}\Big)^{2},
\label{uniform_sample_size_gradient}
\end{equation}
then for $\bgg(\xx)$ defined in~\eqref{subsampled_G}, we have
\begin{equation*}
\Pr \Big( \|\nabla F(\xx) - \bgg(\xx) \| \leq \epsilon \Big) \geq 1-\delta.
\end{equation*}
\end{lemma}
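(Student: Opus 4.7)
The plan is to recast the gradient $\nabla F(\xx)$ as a matrix--matrix (actually matrix--vector) product and then invoke the approximate matrix multiplication machinery from RandNLA. Concretely, let $A \in \mathbb{R}^{p \times n}$ be the matrix whose $i$-th column is $A^{(i)} = \nabla f_i(\xx)$ and let $\bb \in \mathbb{R}^{n}$ be the vector with every entry equal to $1/n$, so that $\nabla F(\xx) = A\bb$. Sampling an index set $\mathcal{S}$ uniformly with replacement and forming $\bgg(\xx) = \frac{1}{|\mathcal{S}|}\sum_{j \in \mathcal{S}} \nabla f_j(\xx)$ is exactly the standard RandNLA estimator $\frac{1}{|\mathcal{S}|}\sum_{t} \frac{1}{p_{i_t}} A^{(i_t)} \bb_{i_t}$ with uniform sampling probabilities $p_i = 1/n$; here the $1/p_{i_t} = n$ factor cancels against the $1/n$ weight in $\bb$, which is precisely why uniform sampling suffices and we obtain a plain average.

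First I would bound the second moment of the error. The classical approximate matrix multiplication lemma (Drineas--Kannan--Mahoney) gives, for i.i.d.\ sampling with probabilities $p_i$,
\begin{equation*}
\Ex\|A\bb - \bgg(\xx)\|^{2} \; \leq \; \frac{1}{|\mathcal{S}|}\sum_{i=1}^{n} \frac{\|A^{(i)}\|^{2}\, \bb_{i}^{2}}{p_{i}}.
\end{equation*}
Plugging in $p_i = 1/n$, $\bb_i = 1/n$, and the uniform bound $\|A^{(i)}\| = \|\nabla f_i(\xx)\| \leq G(\xx)$, the right-hand side collapses to $G(\xx)^{2}/|\mathcal{S}|$. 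This is the key quantitative estimate.

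Next I would upgrade this expectation bound to the claimed high-probability statement. The standard route is a bounded-differences / McDiarmid argument applied to the function $(i_1,\ldots,i_{|\mathcal{S}|}) \mapsto \|A\bb - \bgg(\xx)\|$: changing one sample $i_t$ alters $\bgg(\xx)$ by at most $2 G(\xx)/|\mathcal{S}|$ in norm, so the function has bounded differences of order $2G(\xx)/|\mathcal{S}|$ and concentrates around its mean at the scale $G(\xx)\sqrt{2\ln(1/\delta)/|\mathcal{S}|} \cdot \text{const}$. Combining this with $\Ex\|A\bb - \bgg(\xx)\| \leq \sqrt{\Ex\|A\bb - \bgg(\xx)\|^{2}} \leq G(\xx)/\sqrt{|\mathcal{S}|}$ (Jensen) yields the RandNLA-style deviation bound
\begin{equation*}
\Pr\!\left( \|\nabla F(\xx) - \bgg(\xx)\| \; \leq \; \frac{G(\xx)}{\sqrt{|\mathcal{S}|}} \big(1 + \sqrt{8\ln(1/\delta)}\big) \right) \; \geq \; 1 - \delta,
\end{equation*}
which is precisely the high-probability Frobenius-norm matrix multiplication bound in the RandNLA literature.

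Finally, setting the right-hand side to be at most $\epsilon$ and solving for $|\mathcal{S}|$ yields exactly the threshold~\eqref{uniform_sample_size_gradient}. The main conceptual step is the reduction to approximate matrix multiplication; the main technical step -- and the one I expect to require the most care -- is verifying the precise constant $(1 + \sqrt{8\ln(1/\delta)})$ in the concentration piece, which amounts to combining the second-moment bound with a bounded-differences inequality exactly as done in the RandNLA matrix multiplication analysis, so it is routine rather than novel.
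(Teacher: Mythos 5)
Your proposal is correct and follows essentially the same route as the paper: both recast $\nabla F(\xx)$ as the product $A\bb$ and apply the RandNLA approximate matrix multiplication bound with uniform sampling. The only difference is that the paper invokes the high-probability bound of Drineas et al.\ (Lemma 11 of the cited work) as a black box, whereas you sketch its internal derivation (second-moment bound plus bounded differences), which is consistent with that lemma's actual proof.
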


\comment In order to use the above result in our gradient sub-sampling, we need to be able to efficiently estimate $G(\xx)$ at every iteration. Fortunately, in many different problems, this is often possible; for concrete examples, see Section~\ref{sec:examples}.

\subsection{Globally Convergent Newton with Gradient and Hessian Sub-Sampling}
\label{sec:global_con_nm_grad}
We now show that by combining the gradient sub-sampling of Lemma~\ref{randnla_lemma} with Hessian sub-sampling of Lemma~\ref{chernoff_lemma}, we can still obtain global guarantees for some modification of Algorithm~\ref{alg_global}. This indeed generalizes our algorithms to fully stochastic variants where both the gradient and the Hessian are approximated.

In Section~\ref{sec:global_con_nm_exact_grad}, we first present an iterative algorithm with exact update where, at every iteration, the linear system in~\eqref{sub_hess_unconstrained} is solved  exactly. In Section~\ref{sec:global_con_nm_inexact_grad}, we then present a modified algorithm where such step is done only approximately and the update is computed inexactly, to within a desired tolerance. The proofs of all the results are given in the appendix.

\subsubsection{Exact update}
\label{sec:global_con_nm_exact_grad}
For the sub-sampled Hessian, $H(\xx^{(k)})$, and the sub-sampled gradient, $\bgg(\xx^{(k)})$, consider the update 
\begin{subequations}
\begin{equation}
\xx^{(k+1)} = \xx^{(k)} + \alpha_{k} \pp_{k},
\label{global_x_grad}
\end{equation}
where
\begin{equation}
\pp_{k} = -[H(\xx^{(k)})]^{-1}\bgg(\xx^{(k)}),
\label{global_p_grad}
\end{equation} 
and
\begin{equation}
\begin{aligned}
\alpha_{k} = \arg \max &\quad \alpha\\
\text{s.t.} & \quad \alpha \leq \widehat{\alpha} \\
& \quad F(\xx^{(k)} + \alpha \pp_{k}) \leq F(\xx^{(k)}) + \alpha \beta \pp_{k}^{T} \bgg(\xx^{(k)}),
\end{aligned}
\label{global_alpha_grad}
\end{equation}
for some $\beta \in (0,1)$ and $\widehat{\alpha} \geq 1$.
\label{global_iteration_grad}
\end{subequations}

\begin{algorithm}
\caption{Globally Convergent Newton with Hessian and Gradient Sub-Sampling}
\begin{algorithmic}[1]
\STATE \textbf{Input:} $\xx^{(0)}$, $0 < \delta < 1$, $0 < \epsilon_{1} < 1$, $0 < \epsilon_{2} < 1$, $0 < \beta < 1$, $\widehat{\alpha} \geq 1$ and $\sigma \geq 0$
\STATE - Set the sample size, $|\mathcal{S}_{H}|$, with $\epsilon_{1}$ and $\delta$ as in~\eqref{uniform_sample_size_Chernoff}
\FOR {$k = 0,1,2, \cdots$ until termination} 
\STATE - Select a sample set, $\mathcal{S}_{H}$, of size $|\mathcal{S}_{H}|$ and form $H(\xx^{(k)})$ as in~\eqref{subsampled_H}
\STATE - Set the sample size, $|\mathcal{S}_{\bgg}|$, with $\epsilon_{2}$, $\delta$ and $\xx^{(k)}$ as in~\eqref{uniform_sample_size_gradient}
\STATE - Select a sample set, $\mathcal{S}_{\bgg}$ of size $|\mathcal{S}_{\bgg}|$ and form $\bgg(\xx^{(k)})$ as in~\eqref{subsampled_G}
\IF {$\|\bgg(\xx^{(k)})\| < \sigma \epsilon_{2}$}
\STATE- STOP
\ENDIF
\STATE - Update $\xx^{(k+1)}$ as in~\eqref{global_iteration_grad} with $H(\xx^{(k)})$ and $\bgg(\xx^{(k)})$
\ENDFOR
\end{algorithmic}
\label{alg_global_grad}
\end{algorithm}

\begin{theorem}[Global Convergence of Algorithm~\ref{alg_global_grad}]
\label{global_newton_grad}
Let Assumptions~\eqref{strong_convex_boundedness} hold. For any $\xx^{(k)} \in  \mathbb{R}^{p}$, using Algorithm~\ref{alg_global_grad} with $\epsilon_{1} \leq 1/2$ and
\begin{equation*}
\sigma \geq \frac{4  \tilde{\kappa} }{(1 -\beta ) },
\end{equation*}
we have the following with probability $(1-\delta)^{2}$:
\begin{enumerate}[(i)]
	\item if ``STOP'', then
	\begin{equation*}
\|\nabla F(\xx^{(k)})\| < \left(1 + \sigma \right)\epsilon_{2},
\end{equation*}
\item otherwise,~\eqref{global_linear_rate} holds with $$\rho = \frac{8 \alpha_{k} \beta }{ 9 \tilde{\kappa}},$$
and the step size of at least
\begin{equation*}
\alpha_{k} \geq \frac{(1- \beta)(1-\epsilon_{1})}{\kappa},
\end{equation*}
where $\kappa$ and $\tilde{\kappa}$ are defined in~\eqref{cond_F} and~\eqref{cond_w_wo_rep}, respectively.
\end{enumerate}
\end{theorem}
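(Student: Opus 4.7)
The plan is to condition on the intersection of two good events at iteration $k$: the Hessian event from Lemma~\ref{chernoff_lemma} applied to $\mathcal{S}_{H}$, and the gradient event from Lemma~\ref{randnla_lemma} applied to $\mathcal{S}_{\bgg}$. Since the two sample sets are drawn independently, both events hold simultaneously with probability at least $(1-\delta)^{2}$. On this joint event I have $\lambda_{\min}(H(\xx^{(k)})) \geq (1-\epsilon_{1})\gamma$ and $\|\nabla F(\xx^{(k)}) - \bgg(\xx^{(k)})\| \leq \epsilon_{2}$, and I will also use the deterministic upper bound $\lambda_{\max}(H(\xx^{(k)})) \leq \widehat{K}_{|\mathcal{S}_{H}|} \leq \tilde{\kappa}\gamma$ that follows from Assumption~\eqref{convex_lipschitz} and the definition~\eqref{cond_w_wo_rep}. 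Part (i) is then immediate from the triangle inequality: if the algorithm stops, $\|\bgg(\xx^{(k)})\| < \sigma\epsilon_{2}$, hence $\|\nabla F(\xx^{(k)})\| \leq \|\bgg(\xx^{(k)})\| + \epsilon_{2} < (1+\sigma)\epsilon_{2}$.

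For part (ii), non-termination yields the standing bound $\|\bgg(\xx^{(k)})\| \geq \sigma\epsilon_{2}$. Combining this with the gradient event through a reverse triangle inequality and $\epsilon_{2} \leq \|\bgg(\xx^{(k)})\|/\sigma$ gives the key comparison
\[
\|\bgg(\xx^{(k)})\| \geq \frac{\sigma}{\sigma+1}\|\nabla F(\xx^{(k)})\|,
\]
which will route the remainder of the analysis between $\bgg$ and the true gradient.

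For the step-size bound, I will use $K$-smoothness of $F$ to upper bound $F(\xx^{(k)} + \alpha\pp_{k})$ by its quadratic surrogate and compare it with the Armijo target $F(\xx^{(k)}) + \alpha\beta\pp_{k}^{T}\bgg(\xx^{(k)})$. Splitting $\pp_{k}^{T}\nabla F = \pp_{k}^{T}\bgg + \pp_{k}^{T}(\nabla F - \bgg)$ and using Cauchy--Schwarz on the error term, Armijo holds whenever $\alpha K\|\pp_{k}\|^{2}/2 \leq (1-\beta)(-\pp_{k}^{T}\bgg) - \|\pp_{k}\|\epsilon_{2}$. I then bound $-\pp_{k}^{T}\bgg = \pp_{k}^{T}H(\xx^{(k)})\pp_{k} \geq (1-\epsilon_{1})\gamma\|\pp_{k}\|^{2}$ and control $\epsilon_{2}/\|\pp_{k}\|$ via $\|\pp_{k}\| \geq \|\bgg(\xx^{(k)})\|/\lambda_{\max}(H(\xx^{(k)})) \geq \sigma\epsilon_{2}/(\tilde{\kappa}\gamma)$. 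The hypotheses $\sigma \geq 4\tilde{\kappa}/(1-\beta)$ and $\epsilon_{1} \leq 1/2$ are precisely what is needed to absorb the gradient-error penalty and leave the claimed lower bound $\alpha_{k} \geq (1-\beta)(1-\epsilon_{1})/\kappa$.

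For the linear rate I will chain three ingredients. First, the Armijo inequality together with $-\pp_{k}^{T}\bgg \geq \|\bgg(\xx^{(k)})\|^{2}/\lambda_{\max}(H(\xx^{(k)})) \geq \|\bgg(\xx^{(k)})\|^{2}/(\tilde{\kappa}\gamma)$ delivers $F(\xx^{(k+1)}) - F(\xx^{(k)}) \leq -\alpha_{k}\beta\|\bgg(\xx^{(k)})\|^{2}/(\tilde{\kappa}\gamma)$. Second, squaring the key comparison and observing that $\sigma \geq 4\tilde{\kappa}/(1-\beta) \geq 4 \geq 2$ (using $\tilde{\kappa} \geq 1$ and $\beta < 1$) so that $\sigma/(\sigma+1) \geq 2/3$, one gets $\|\bgg(\xx^{(k)})\|^{2} \geq (4/9)\|\nabla F(\xx^{(k)})\|^{2}$. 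Third, $\gamma$-strong convexity gives $\|\nabla F(\xx^{(k)})\|^{2} \geq 2\gamma(F(\xx^{(k)}) - F(\xx^{*}))$. Subtracting $F(\xx^{*})$ from both sides produces~\eqref{global_linear_rate} with $\rho = 8\alpha_{k}\beta/(9\tilde{\kappa})$. The main obstacle is exactly this bookkeeping: the Armijo test is phrased in $\bgg$ while smoothness and strong convexity live in $\nabla F$, and the single choice $\sigma \geq 4\tilde{\kappa}/(1-\beta)$ must simultaneously be large enough to yield the clean rate constant $8/9$ and to control the gradient-error term in the step-size argument, while $\epsilon_{1} \leq 1/2$ keeps the sub-sampled spectrum bounded away from $\gamma/2$ with only a modest sample size.
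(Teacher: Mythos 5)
Your proposal is correct and follows essentially the same route as the paper's proof: the $K$-smoothness bound with the $\pp_{k}^{T}\nabla F = \pp_{k}^{T}\bgg + \pp_{k}^{T}(\nabla F - \bgg)$ split for the Armijo/step-size analysis, the non-termination bound $\|\bgg(\xx^{(k)})\|\geq\sigma\epsilon_{2}$ to absorb the gradient error, the factor $2/3$ relating $\|\bgg\|$ to $\|\nabla F\|$ (you get it via $\sigma/(\sigma+1)$, the paper via $(\sigma-2)/(\sigma-1)$ — both valid), and the Polyak--{\L}ojasiewicz inequality to close the recursion. No gaps.
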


Theorem~\ref{global_newton_grad} guarantees global convergence with at least a linear rate which depends on the quantities related to the specific problem, i.e., condition number. As mentioned before, in SSN2~\cite{romassn2}, we have shown, through a finer grained analysis, that the locally linear convergence rate of such sub-sampled Newton method with a constant step size $\alpha_{k} = 1$ is indeed problem independent. As in Theorem~\ref{local_global_newton}, it is possible to combine the two results and obtain a globally convergent algorithm with fast and \textit{problem-independent} rate.

\begin{theorem}[Global Conv.\ of Alg.\ \ref{alg_global_grad} with Problem-Independent Local Rate]
\label{local_global_newton_grad}
Let Assumptions~\eqref{strong_convex_boundedness} and~\eqref{Hess_Lip} hold. Consider any $0 < \rho_{0}, \rho_{1}, \rho_{2} < 1$ such that $\rho_{0} + \rho_{1} < \rho_{2}$, set $$\epsilon_{1} \leq \min\left\{\frac{(1-2\beta)}{2(1-\beta)},\frac{\rho_{0}}{4(1 + \rho_{0})\sqrt{\kappa_{1}}}\right\},$$ and define
\begin{equation*}
 c \defeq \frac{2 ( \rho_{2} - (\rho_{0} + \rho_{1}) ) (1-\epsilon_{1}) \gamma }{L}.
\end{equation*}	
Using Algorithm~\ref{alg_global_grad} with any $\xx^{(0)} \in  \mathbb{R}^{p}$ and 
\begin{align*}
&\widehat{\alpha} = 1, \quad \beta \leq \hf, \quad \sigma \geq \frac{4  \tilde{\kappa} }{(1 -\beta ) }, \\
&\epsilon_{2}^{(0)} \leq \frac{(1-\epsilon_{1})\gamma \rho_{1} (1 -2\epsilon_{1} - 2 (1-\epsilon_{1}) \beta)^{2} c}{6L \sqrt{\tilde{\kappa}}},\\
& \epsilon^{(k)}_{2} = \rho_{2} \epsilon_{2}^{(k-1)}, \quad k = 1,2,\cdots,
\end{align*}
after
\begin{equation}
k \geq \ln \left( \frac{2( \rho_{2} - (\rho_{0} + \rho_{1}) )^{2} q_{2}^{2}(\epsilon_{1},\epsilon_{2},\beta,\tilde{\kappa},L) }{9 K\left(F(\xx^{(0)}) - F(\xx^{*})\right)} \right) / \ln\left(1-\frac{8\beta(1-\beta)(1-\epsilon_{1})}{9\kappa \tilde{\kappa}}\right)
\label{local_k_grad}
\end{equation}
iterations, we have the following with probability $(1-\delta)^{2k}$:
\begin{enumerate}
	\item if ``STOP'', then
\begin{equation*}
\|\nabla F(\xx^{(k)})\| < \left(1 + \sigma \right)\rho_{2}^{k} \epsilon_{2}^{(0)},
\end{equation*}
\item otherwise, we get ``problem-independent'' linear convergence, i.e.,
\begin{equation}
\|\xx^{(k+1)} - \xx^{*}\| \leq \rho_{2} \|\xx^{(k)} - \xx^{*}\|,
\label{local_rate_grad}
\end{equation}
where $\kappa$, $\kappa_{1}$, $\tilde{\kappa}$ and $q_{2}(\epsilon_{1},\epsilon_{2},\beta,\tilde{\kappa},L)$ are defined in~\eqref{cond_F},~\eqref{cond_q},~\eqref{cond_w_wo_rep} and~\eqref{q2}, respectively.
Moreover, the step size of $\alpha_{k} = 1$ is selected in~\eqref{global_alpha_grad} for all subsequent iterations.
\end{enumerate}
\end{theorem}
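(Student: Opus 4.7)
The strategy is a two-phase argument that marries the global linear decrease from Theorem~\ref{global_newton_grad} with the problem-independent local recursion developed in SSN2~\cite{romassn2}. Phase~1 uses the global result to drive $\|\xx^{(k)}-\xx^{*}\|$ below the threshold at which SSN2's local analysis kicks in; Phase~2 uses that local analysis together with the geometrically decaying gradient-sampling tolerance $\epsilon_{2}^{(k)}$ to sustain the rate $\rho_{2}$ and the unit step size $\alpha_{k}=1$, just as in Theorem~\ref{local_global_newton} but with the extra gradient sub-sampling error to track.

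For Phase~1, so long as STOP has not yet fired, Theorem~\ref{global_newton_grad} gives, with probability $(1-\delta)^{2}$ per iteration,
\begin{equation*}
F(\xx^{(k+1)})-F(\xx^{*}) \leq \Big(1-\tfrac{8\beta(1-\beta)(1-\epsilon_{1})}{9\kappa\tilde{\kappa}}\Big)\big(F(\xx^{(k)})-F(\xx^{*})\big).
\end{equation*}
Iterating $k$ times and converting via the strong-convexity bounds $\tfrac{\gamma}{2}\|\xx^{(k)}-\xx^{*}\|^{2}\leq F(\xx^{(k)})-F(\xx^{*})$ and $F(\xx^{(0)})-F(\xx^{*})\leq \tfrac{K}{2}\|\xx^{(0)}-\xx^{*}\|^{2}$, the lower bound on $k$ in~\eqref{local_k_grad} is calibrated so that $\|\xx^{(k)}-\xx^{*}\|$ falls below the ``entry threshold'' encoded by $c$ and $q_{2}$, thereby entering the domain of SSN2's local bound.

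For Phase~2, I would import the local error recursion from SSN2~\cite{romassn2}. Under~\eqref{Hess_Lip} it has the schematic form
\begin{equation*}
\|\xx^{(k+1)}-\xx^{*}\| \leq \tfrac{L}{2(1-\epsilon_{1})\gamma}\|\xx^{(k)}-\xx^{*}\|^{2} + C_{\epsilon_{1}}\|\xx^{(k)}-\xx^{*}\| + D_{g}\,\epsilon_{2}^{(k)},
\end{equation*}
where the three contributions come respectively from the Hessian-Lipschitz term, the Hessian sub-sampling (with $C_{\epsilon_{1}}\leq \rho_{0}$ thanks to $\epsilon_{1}\leq \rho_{0}/(4(1+\rho_{0})\sqrt{\kappa_{1}})$), and the gradient sub-sampling. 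The definition of $c$ in the theorem makes the quadratic term at most $\tfrac{1}{2}(\rho_{2}-\rho_{0}-\rho_{1})\,\|\xx^{(k)}-\xx^{*}\|$ once we are in the local regime, and the choice of $\epsilon_{2}^{(0)}$, together with the geometric decay $\epsilon_{2}^{(k)}=\rho_{2}^{k}\epsilon_{2}^{(0)}$, makes the gradient term at most $\rho_{1}\|\xx^{(k)}-\xx^{*}\|$ by induction on $k$. Summing the three contributions yields~\eqref{local_rate_grad}. Acceptance of $\alpha_{k}=1$ in~\eqref{global_alpha_grad} then follows exactly as in Theorem~\ref{local_global_newton}: $\beta<1/2$, the Hessian-Lipschitz condition, and the closeness of $\xx^{(k)}$ to $\xx^{*}$ make the Armijo test pass at $\alpha=1$.

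The STOP branch is immediate from Lemma~\ref{randnla_lemma}: on the event $\|\nabla F(\xx^{(k)})-\bgg(\xx^{(k)})\|\leq \epsilon_{2}^{(k)}$, triggering STOP means $\|\bgg(\xx^{(k)})\|<\sigma\epsilon_{2}^{(k)}$, so the triangle inequality gives $\|\nabla F(\xx^{(k)})\|<(1+\sigma)\epsilon_{2}^{(k)}=(1+\sigma)\rho_{2}^{k}\epsilon_{2}^{(0)}$, matching the first conclusion of the theorem. The $(1-\delta)^{2k}$ probability is the joint success of the Hessian event (Lemma~\ref{chernoff_lemma}) and the gradient event (Lemma~\ref{randnla_lemma}) over all $k$ iterations. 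The main obstacle is the constant book-keeping: one must verify that the threshold produced by the Phase~1 count~\eqref{local_k_grad} exactly matches the entry condition of SSN2's local recursion, and that the inductive invariant $D_{g}\,\epsilon_{2}^{(k)}\leq \rho_{1}\|\xx^{(k)}-\xx^{*}\|$ is preserved under both the geometric decay of $\epsilon_{2}^{(k)}$ and the rate-$\rho_{2}$ contraction of $\|\xx^{(k)}-\xx^{*}\|$. This requires unfolding $q_{2}$ and tracking constants carefully, but introduces no new conceptual ingredient beyond those already used in Theorem~\ref{local_global_newton}.
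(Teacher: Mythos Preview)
Your two-phase strategy matches the paper's: use Theorem~\ref{global_newton_grad} to drive the iterate into the local regime, then invoke the local recursion from SSN2 (their Theorem~13) together with the geometrically decaying $\epsilon_{2}^{(k)}$. The STOP branch and the $(1-\delta)^{2k}$ bookkeeping are handled correctly.

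Where your proposal understates the work is the acceptance of $\alpha_{k}=1$. This does \emph{not} follow ``exactly as in Theorem~\ref{local_global_newton}'', because the Armijo condition~\eqref{global_alpha_grad} is written in terms of the sub-sampled gradient $\bgg(\xx^{(k)})$, not $\nabla F(\xx^{(k)})$. Starting from the cubic Taylor bound one has $\widehat{F}'(0)=\pp_{k}^{T}\nabla F(\xx^{(k)})$, which must be bridged to $\pp_{k}^{T}\bgg(\xx^{(k)})$ via $\pp_{k}^{T}\nabla F(\xx^{(k)})\leq \pp_{k}^{T}\bgg(\xx^{(k)})+\epsilon_{2}\|\pp_{k}\|$. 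This extra $\epsilon_{2}\|\pp_{k}\|$ term survives all the way to the end: after rewriting everything in $y=\|\bgg(\xx^{(k)})\|$ the Armijo-at-$\alpha=1$ requirement becomes a genuine quadratic inequality $Ay^{2}+By+C\leq 0$ with $C=\epsilon_{2}\sqrt{\tilde{\kappa}/(1-\epsilon_{1})}>0$. Its roots are exactly the $q_{1}$ and $q_{2}$ of~\eqref{q1_q2}, so $\alpha_{k}=1$ passes iff $q_{1}\leq \|\bgg(\xx^{(k)})\|\leq q_{2}$. The \emph{lower} bound $q_{1}>0$ is new relative to Theorem~\ref{local_global_newton} (where only an upper bound on $\|\nabla F\|$ was needed) and is precisely what the STOP test enforces, since one checks $q_{1}\leq \sigma\epsilon_{2}$. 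Thus $q_{2}$ is not an external constant to be ``unfolded'' a posteriori; it is \emph{defined} by this Armijo analysis, and~\eqref{local_k_grad} is calibrated so that $\|\bgg(\xx^{(k)})\|\leq 2\|\nabla F(\xx^{(k)})\|$ lands below $(\rho_{2}-(\rho_{0}+\rho_{1}))\cdot\tfrac{2}{3}q_{2}$, after which $\|\xx^{(k)}-\xx^{*}\|\leq c$ follows from~\eqref{F_strong_gen}. In short, the paper places the bulk of the new work in the $\alpha_{k}=1$ step and dispatches the local contraction by a quick citation to SSN2, whereas your sketch inverts the emphasis.
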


\subsubsection{Inexact update}
\label{sec:global_con_nm_inexact_grad}
As in Section~\ref{sec:global_con_nm_inexact}, we now consider the inexact version of~\eqref{global_p_grad}, as a solution of 
\begin{subequations}
\begin{eqnarray}
&&\|H(\xx^{(k)})\pp_{k} + \bgg(\xx^{(k)})\| \leq \theta_{1} \|\bgg(\xx^{(k)})\|, \label{global_grad_p_inexact} \\
&&\pp_{k}^{T} \bgg(\xx^{(k)}) \leq -(1-\theta_{2}) \pp_{k}^{T} H(\xx^{(k)})\pp_{k}, \label{angle_grad_inexact}
\end{eqnarray}
\label{global_grad_update_inexact}
\end{subequations}
for some $0 \leq \theta_{1},\theta_{2} < 1$.

\begin{theorem}[Global Convergence of Algorithm~\ref{alg_global_grad}: Inexact Update]
\label{global_newton_grad_inexact}
Let Assumptions~\eqref{strong_convex_boundedness} hold. Also let $0 < \theta_{2} < 1$ and $0 < \theta_{2} < 1$ be given. For any $\xx^{(k)} \in  \mathbb{R}^{p}$, using Algorithm~\ref{alg_global_grad} with $\epsilon_{1} \leq 1/2$, the ``inexact'' update direction~\eqref{global_grad_update_inexact} instead of~\eqref{global_p_grad}, and 
\begin{equation*}
\sigma \geq  \frac{4  \tilde{\kappa}}{(1 -\theta_{1})(1 -\theta_{2}) (1 -\beta )},
\end{equation*}
we have the following with probability $1-\delta$:
\begin{enumerate}[(i)]
	\item if ``STOP'', then
	\begin{equation*}
\|\nabla F(\xx^{(k)})\| < \left(1 + \sigma \right)\epsilon_{2},
\end{equation*}
\item otherwise,~\eqref{global_linear_rate} holds where
\begin{enumerate}[(1)]
	\item if $$\theta_{1} \leq \sqrt{\frac{(1-\epsilon_{1})}{4 \tilde{\kappa}}},$$
	then $\rho = {4 \alpha_{k} \beta }/{ 9 \tilde{\kappa}}$,
\item otherwise $\rho = {8 \alpha_{k} \beta(1-\theta_{2})(1-\theta_{1})^{2}(1-\epsilon_{1}) }/{ 9\tilde{\kappa}^{2}}$,
\end{enumerate}
with $\tilde{\kappa}$ defined as in~\eqref{cond_w_wo_rep}.
Moreover, for both cases, the step size is at least
\begin{equation*}
\alpha_{k} \geq \frac{(1-\theta_{2})(1- \beta)(1-\epsilon_{1})}{\kappa},
\end{equation*}
where $\kappa$ is defined as in~\eqref{cond_F}.
\end{enumerate}
\end{theorem}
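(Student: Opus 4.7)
The plan is to treat this result as the fusion of Theorem~\ref{global_newton_grad} (exact linear-system solve with sub-sampled gradient) and Theorem~\ref{global_newton_inexact} (inexact solve with full gradient), so that most of the scaffolding is already in place. First I would invoke Lemma~\ref{chernoff_lemma} on the Hessian sample $\mathcal{S}_H$ and Lemma~\ref{randnla_lemma} on the gradient sample $\mathcal{S}_{\bgg}$; a union bound delivers an event on which simultaneously $H(\xx^{(k)}) \succeq (1-\epsilon_1)\gamma \mathbb{I}$ and $\|\nabla F(\xx^{(k)}) - \bgg(\xx^{(k)})\| \leq \epsilon_2$, so the inexact direction $\pp_k$ satisfying~\eqref{global_grad_update_inexact} is well-defined. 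The STOP branch then follows immediately from the triangle inequality $\|\nabla F(\xx^{(k)})\| \leq \|\bgg(\xx^{(k)})\| + \epsilon_2 < (1 + \sigma)\epsilon_2$.

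For the non-STOP branch, where $\|\bgg(\xx^{(k)})\| \geq \sigma \epsilon_2$, I would use the decomposition $\pp_k^T \nabla F(\xx^{(k)}) = \pp_k^T \bgg(\xx^{(k)}) + \pp_k^T(\nabla F(\xx^{(k)}) - \bgg(\xx^{(k)}))$, bound the second term in absolute value by $\|\pp_k\| \epsilon_2 \leq \|\pp_k\| \|\bgg(\xx^{(k)})\|/\sigma$, and then compare $\|\pp_k\|\|\bgg(\xx^{(k)})\|$ against $\pp_k^T H(\xx^{(k)}) \pp_k$ via the two-sided bound $(1-\theta_1)\|\bgg(\xx^{(k)})\| \leq \|H(\xx^{(k)})\pp_k\| \leq (1+\theta_1)\|\bgg(\xx^{(k)})\|$ from~\eqref{global_grad_p_inexact}, together with $\lambda_{\min}(H(\xx^{(k)})) \geq (1-\epsilon_1)\gamma$ and $\|H(\xx^{(k)})\| \leq \widehat{K}_{|\mathcal{S}_H|}$. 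The lower bound $\sigma \geq 4\tilde{\kappa}/((1-\theta_1)(1-\theta_2)(1-\beta))$ is calibrated exactly so that the gradient-approximation error is absorbed as a small multiplicative loss in $-\pp_k^T \nabla F(\xx^{(k)})$ relative to $(1-\theta_2)\pp_k^T H(\xx^{(k)})\pp_k$, producing the factor $8/9$ visible in the stated rate.

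The line-search step is then standard: $K$-smoothness of $F$ yields the quadratic upper bound $F(\xx^{(k)} + \alpha\pp_k) \leq F(\xx^{(k)}) + \alpha \pp_k^T \nabla F(\xx^{(k)}) + K\alpha^2\|\pp_k\|^2/2$, and forcing this to lie below the Armijo target in~\eqref{global_alpha_grad} produces the advertised lower bound $\alpha_k \geq (1-\theta_2)(1-\beta)(1-\epsilon_1)/\kappa$. To convert the Armijo decrease $F(\xx^{(k+1)}) - F(\xx^{(k)}) \leq \alpha_k \beta \pp_k^T \bgg(\xx^{(k)})$ into the linear rate~\eqref{global_linear_rate}, I would invoke $\gamma$-strong convexity through $F(\xx^{(k)}) - F(\xx^*) \leq \|\nabla F(\xx^{(k)})\|^2/(2\gamma)$ and relate $\pp_k^T \bgg$ to $\|\nabla F\|^2$. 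The two regimes for $\rho$ emerge at exactly this relating step: when $\theta_1 \leq \sqrt{(1-\epsilon_1)/(4\tilde{\kappa})}$, the residual $H\pp_k + \bgg$ is small enough that one can use the sharper bound on $-\pp_k^T \bgg$ in terms of $\|\bgg\|^2/\widehat{K}_{|\mathcal{S}_H|}$ and recover the $1/\tilde{\kappa}$ rate; otherwise one uses the cruder chain $-\pp_k^T \bgg \geq (1-\theta_2)\pp_k^T H \pp_k \geq (1-\theta_2)(1-\epsilon_1)\gamma \|\pp_k\|^2$ combined with $\|\pp_k\| \geq (1-\theta_1)\|\bgg\|/\widehat{K}_{|\mathcal{S}_H|}$, which gives the $1/\tilde{\kappa}^2$ rate, mirroring the dichotomy of Theorem~\ref{global_newton_inexact}.

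The main obstacle I anticipate is precise bookkeeping: three sources of inexactness (Hessian sub-sampling $\epsilon_1$, gradient sub-sampling $\epsilon_2$ tied to the threshold $\sigma$, and linear-system inexactness $\theta_1, \theta_2$) interact multiplicatively, and the lower bound on $\sigma$ must be tuned so that the gradient perturbation costs exactly the factor $8/9$ rather than something smaller relative to the $\sigma = \infty$ analysis of Theorem~\ref{global_newton_inexact}. Threading these constants uniformly through both $\theta_1$ regimes while keeping the $\alpha_k$ lower bound independent of $\theta_1$---and verifying that $\pp_k$ is actually a descent direction for $F$ (not just for $\bgg$) in both regimes---is the delicate part of the argument.
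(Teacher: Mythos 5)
Your proposal is correct and follows essentially the same route as the paper, which itself constructs this proof by merging the arguments of Theorems~\ref{global_newton_inexact} and~\ref{global_newton_grad}: the same decomposition $\pp_{k}^{T}\nabla F = \pp_{k}^{T}\bgg + \pp_{k}^{T}(\nabla F - \bgg)$ with the error absorbed via the threshold $\sigma$, the same $K$-smoothness Armijo bound, and the same two-regime dichotomy on $\theta_{1}$ (the only cosmetic difference is that the paper derives the $4/9$ and $8/9$ factors explicitly from $\|\nabla F(\xx^{(k)})\| - \epsilon_{2} \geq \tfrac{2}{3}\|\nabla F(\xx^{(k)})\|$, which holds since $\sigma \geq 4$). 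The bookkeeping concerns you flag are exactly the ones the paper resolves, using $\epsilon_{1} \leq 1/2$ and the stated lower bound on $\sigma$ to make the required bound on $\epsilon_{2}$ hold automatically on the non-STOP branch.
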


\comment Theorem~\ref{global_newton_grad_inexact} indicates that, in order to grantee a faster convergence rate, the linear system needs to be solved to a ``small-enough'' accuracy, which is in the order of $\mathcal{O}(\sqrt{1/\tilde{\kappa}})$. As in Theorem~\ref{global_newton_inexact}, we note that using a tolerance of order $\mathcal{O}(\sqrt{1/\tilde{\kappa}})$, we can still guarantee a similar global convergence rate as that of the algorithm with exact updates!

\section{Examples}
\label{sec:examples}
In this Section, we present an instance of problems which are of the form~\eqref{obj}. Specifically, examples from generalized linear models (GLM) are given in Sections~\ref{sec:example_glm}, followed by some numerical simulations in Section~\ref{simulations}. 
 
\subsection{Parameter Estimation with GLMs}
\label{sec:example_glm}
The class of generalized linear models is used to model a wide variety of regression and classification problems. The process of data fitting using such GLMs usually consists of a training data set containing $n$ response-covariate pairs, and the goal is to predict some output response based on some covariate vector, which is given after the training phase. More specifically, let $(\aa_{i},b_{i}), \; i = 1,2,\cdots,n,$ form such response-covariate  pairs in the training set where $\aa_{i} \in \mathbb{R}^{p}$. The domain of $b_{i}$ depends on the GLM used: for example, in the standard linear Gaussian model $b_{i} \in \mathbb{R}$, in the logistic models for classification, $b_{i} \in \{0, 1\}$, and in Poisson models for count-valued responses, $b_{i} \in \{0, 1, 2, \ldots \}$. See the book~\cite{mccullagh1989generalized} for further details and applications.

Consider the problem of maximum a posteriori (MAP) estimation using any GLM with canonical link function and Gaussian prior. This problem boils down to minimizing the regularized negative log-likelihood as
\begin{equation*}
F(\xx) = \frac{1}{n} \sum_{i=1}^{n} \left( \Phi(\aa_{i}^{T} \xx) - b_{i} \aa_{i}^{T} \xx \right) + \frac{\lambda}{2} \|\xx\|^{2},
\end{equation*}
where $\lambda \geq 0$ is the regularization parameter. The \textit{cumulant generating function}, $\Phi$, determines the type of GLM. For example, $\Phi(t) = 0.5 t^{2}$ gives rise to ridge regression (RR), while $\Phi(t) = \ln\left(1+\exp(t)\right)$ and $\Phi(t) = \exp(t)$ yield $\ell_{2}$-regularized logistic regression (LR) and $\ell_{2}$-regularized Poisson regression (PR), respectively. 
It is easily verified that the gradient and the Hessian of $F$ are
\begin{align*}
\nabla F(\xx) &=  \frac{1}{n} \sum_{i=1}^{n} \left( \frac{d \Phi(t)}{d t} \rvert_{t = \aa_{i}^{T} \xx} - b_{i} \right) \aa_{i} + \lambda \xx, \\
\nabla^{2} F(\xx) &=  \frac{1}{n} \sum_{i=1}^{n} \left(  \frac{d^{2} \Phi(t)}{d t^{2}} \rvert_{t = \aa_{i}^{T} \xx} \right) \aa_{i} \aa^{T}_{i} + \lambda \mathbb{I}.
\end{align*}

As mentioned before in Section~\ref{sec:subsampl_newton_hess_grad}, in order to use Lemma~\ref{randnla_lemma}, we need to be able to efficiently estimate $G(\xx^{(k)})$ at every iteration. For illustration purposes only, Table~\ref{table_GLM} gives some very rough estimates of  $G(\xx)$ for GLMs. In practice, as a pre-processing and before starting the algorithms, one can pre-compute the quantities which depend only on $(\aa_{i}, b_{i})$'s. Then updating $G(\xx)$ at every iteration is done very efficiently as it is just a matter of computing $\|\xx\|$. 

\begin{table}[htb]
\centering
\scalebox{1}{
 \begin{tabular}{||c || c || c ||} 
 \hline
  & $\nabla f_{i}(\xx)$ & $G(\xx)$ \\ [0.5ex] 
 \hline\hline
 RR & $\left( \aa_{i}^{T} \xx - b_{i} \right) \aa_{i} + \lambda \xx$ & $\|\xx\| \max_{i} (\|\aa_{i}^{T}\|^{2} + \lambda ) + \max_{i} |b_{i}| \|\aa_{i}\|$  \\ [2ex]
 LR & $\left( \frac{1}{1+e^{-\aa_{i}^{T} \xx}}  - b_{i} \right) \aa_{i} + \lambda \xx$  & $\lambda \|\xx\| + \max_{i} (1 + |b_{i}|) \|\aa_{i}\|$ \\ [2ex]
 PR & $\left( e^{\aa_{i}^{T} \xx} - b_{i} \right) \aa_{i} + \lambda \xx$ & $\lambda \|\xx\| + e^{\hf \|\xx\|^{2}} \max_{i} \|\aa_{i}\| e^{\hf \|\aa_{i}\|^{2}} + \max_{i} |b_{i}| \|\aa_{i}\|$  \\ [2ex]
 \hline
 \end{tabular}}
\caption{Estimates for $G(\xx)$ in GLMs
\label{table_GLM}}
\end{table}

\subsection{Numerical Simulations}
\label{simulations}
In this section, we study the performance of Algorithm~\ref{alg_global} (henceforth called SSN), both with exact and inexact updates, through simulations. We consider $\ell_{2}$-regularized logistic (LR) regression as described in Section~\ref{sec:example_glm}. We use three synthetic data matrix as described in Table~\ref{synth_data} and 
compare the performance of the following algorithms: 
\begin{compactenum}[(i)]
	\item Gradient Descent (GD) with constant step-size (the step-size was hand tuned to obtain the best performance),
	\item Accelerated Gradient Descent (AGD),~\cite{nesterov2004introductory}, which improves over GD by using a momentum term,
	\item BFGS with Armijo line-search,
	\item L-BFGS with Armijo line-search and using limited past memory of $10$, $100$ 
	\item Full Newton's method with Armijo line-search,
	\item SSN with exact update (SSN-X) and 
	\item SSN with inexact update (SSN-NX)  with inexactness tolerances of $(\theta_{1} = 10^{-2}, \theta_{2} = 0.5)$ for data sets $D_{1}$ and $D_{2}$, and $(\theta_{1} = 10^{-4}, \theta_{2} = 0.5)$ for $D_{3}$.
\end{compactenum}

\begin{table}[htb]
\vskip 0.15in
\begin{center}
\begin{sc}
\begin{tabular}{|cccccc|}
\hline
Data & $n$ & $p$ & nnz & $\kappa $ & $\kappa_{1}$ \\
\hline
$D_{1}$  &  $10^{6}$ &  $10^{4}$ & $0.02\%$ & $\approx 10^{4}$ & $\approx 10^{6}$\\
$D_{2}$  &  $5\times10^{4}$ &  $5\times10^{3}$ & Dense & $\approx 10^{6}$ & $\approx 10^{6}$\\
$D_{3}$  &  $10^{7}$ &  $2\times10^{4}$ &  $0.006\%$ & $\approx 10^{10}$ & $\approx 10^{11}$\\
\hline
\end{tabular}
\end{sc}
\end{center}
\vskip -0.1in
\caption{Synthetic Data sets used in the experiments. ``nnz'' refers to the number of non-zeros in the data set. $\kappa$ and $\kappa_{1}$ are the condition number of $F$ and that of the sub-sampling problem, defined in~\eqref{cond_F} and~\eqref{cond_q}, respectively.} \label{synth_data}
\end{table}

We run the simulations for each method, starting from the same initial point, until $\|\nabla F(\xx)\| \leq 10^{-8}$ or a maximum number of iterations is reached, and report the relative errors of the iterates, i.e., $\|\xx^{(k)}- \xx^{*} \|/\|\xx^{*}\|$ as well as the relative errors of the objective function, i.e., $|F(\xx^{(k)})- F(\xx^{*}) |/|F(\xx^{*})|$, both versus elapsed time (in seconds). 
The results are shown in Figure~\ref{fig_results}. 

The first order methods, i.e., GD and AGD, in none of these examples, managed to converge to anything reasonable. For the ill-conditioned problems with data sets $D_{2}$ and $D_{3}$, while all instances of SSN converged to the desired accuracy with no difficulty, no other method managed to go past a ``single'' digit of accuracy, in the same time-period, or anytime soon after! This is indeed expected, as SSN captures the regions with \textit{high} and \textit{low} curvature, and scales the gradient accordingly to make progress. These examples show that, when dealing with ill-conditioned problems, using only first order information is certainly not enough to obtain a reasonable solution. In these problems, employing a second-order algorithm such as SSN with inexact update not only yields the desired solution, but also does it very efficiently! In particular note the fast convergence of SSN-NX for both of these problems. 

For the much better conditioned problem using the data set $D_{1}$, BFGS and L-BFGS with the history size of $100$ outperform SSN-X with $5\%$ and $10\%$ sampling. This is also expected since solving the $10,000 \times 10,000$ linear system exactly at every iteration is the bottleneck of computations for SSN-X, in comparison to matrix free BFGS and L-BFGS. However, even in such a well-conditioned problem where BFGS and L-BFGS appear very attractive, inexactness coupled with SSN can be more efficient. It is clear that all SSN-NX variants converge to the desired solution faster than either BFGS or its limited memory variant. For example, using $D_{1}$, the speed-up of using SSN-NX with $10\%$ of the data over L-BFGS is at least $4$ times.  Note also that for the very ill-conditioned problem with $D_{3}$, larger sample size, i.e., $20\%$, was required to obtain a fast convergence, illustrating that the sample size could grow with the condition number.

\begin{figure}[!ht]\centering
    \begin{minipage}[htb]{0.32\linewidth}
        \subfigure[Iterate Rel.\ Err.\ for $D_{1}$]
        {\includegraphics[scale=0.29]{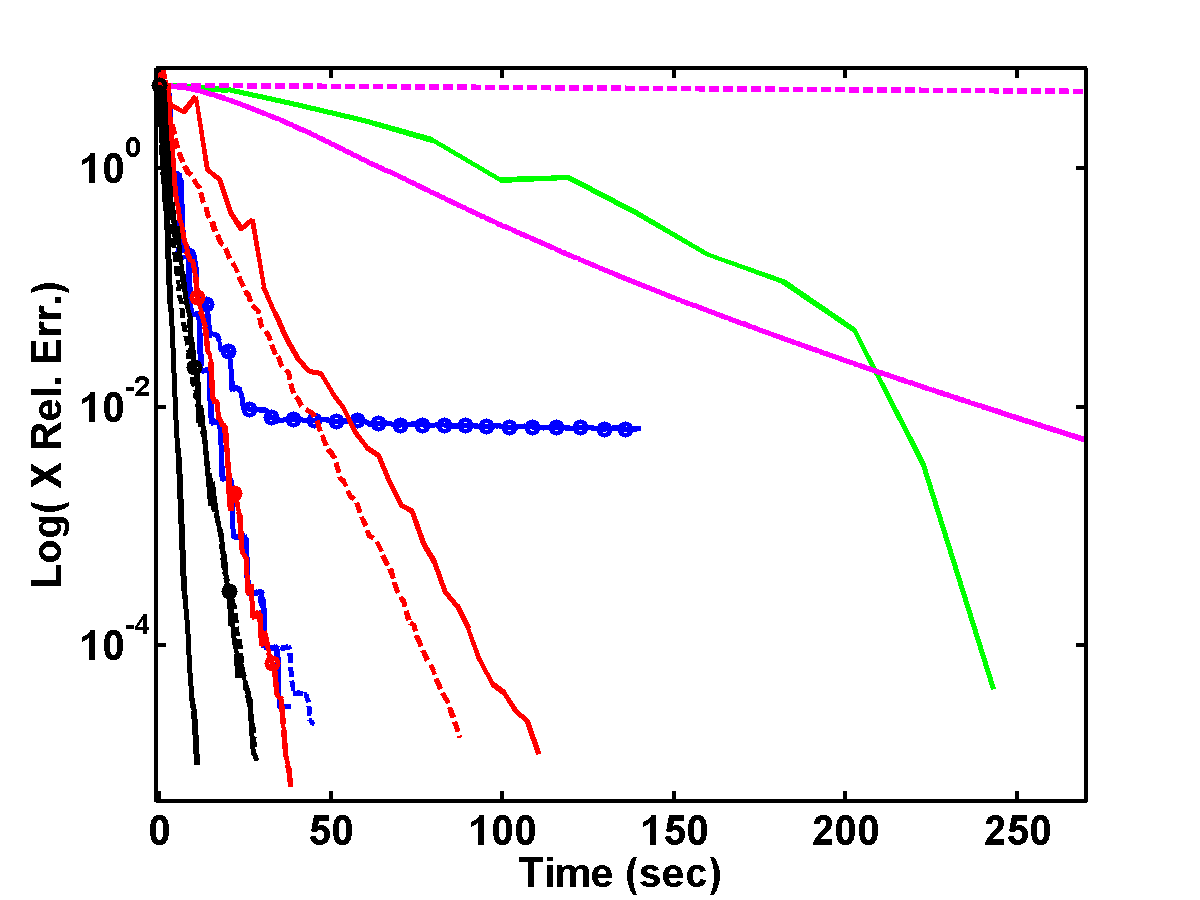}}
    \end{minipage}
    \begin{minipage}[htb]{0.32\linewidth}
        \subfigure[Function Rel.\ Err.\ for $D_{1}$]
        {\includegraphics[scale=0.29]{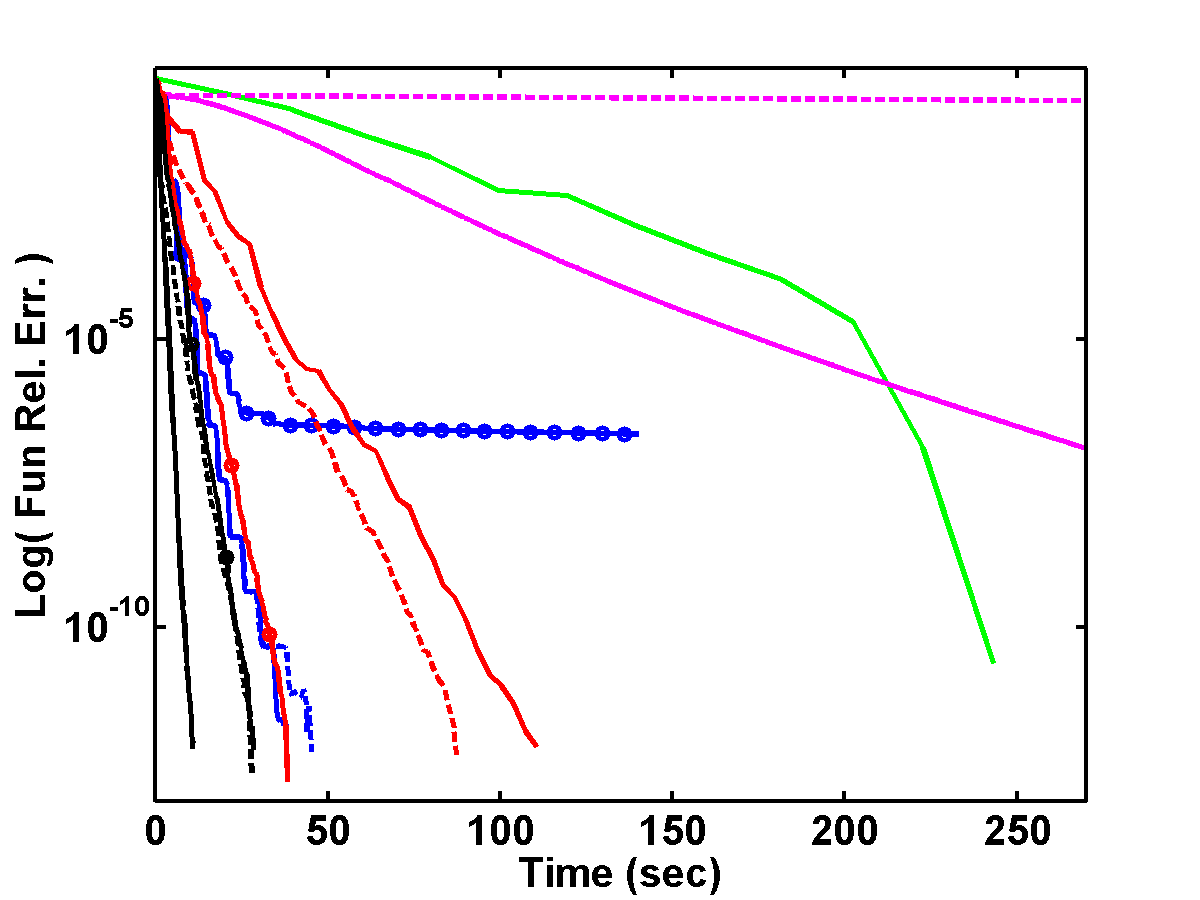}}
    \end{minipage}
		\begin{minipage}[htb]{0.32\linewidth}
        \subfigure[Iterate Rel.\ Err.\ for $D_{2}$]
        {\includegraphics[scale=0.29]{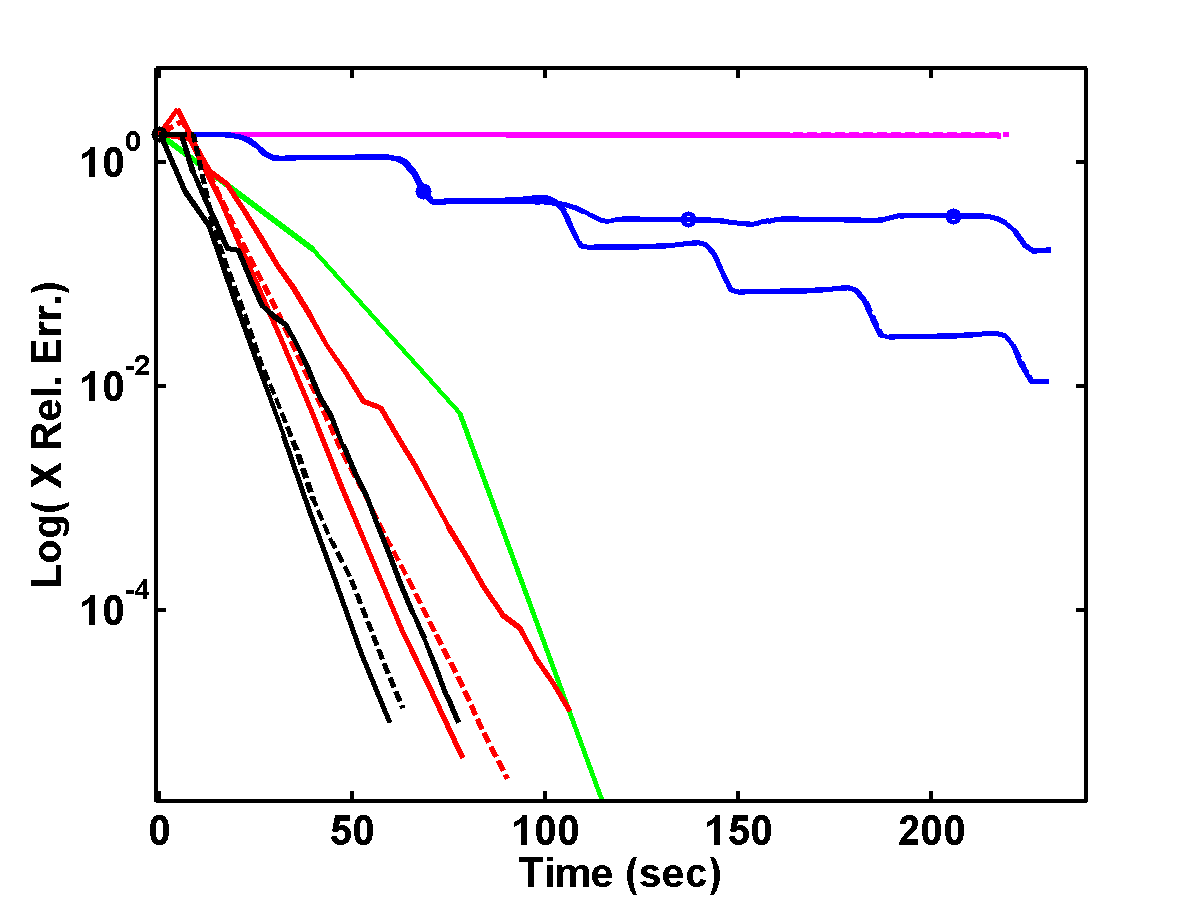}}
    \end{minipage}
    \begin{minipage}[htb]{0.32\linewidth}
        \subfigure[Function Rel.\ Err.\ for $D_{2}$]
        {\includegraphics[scale=0.29]{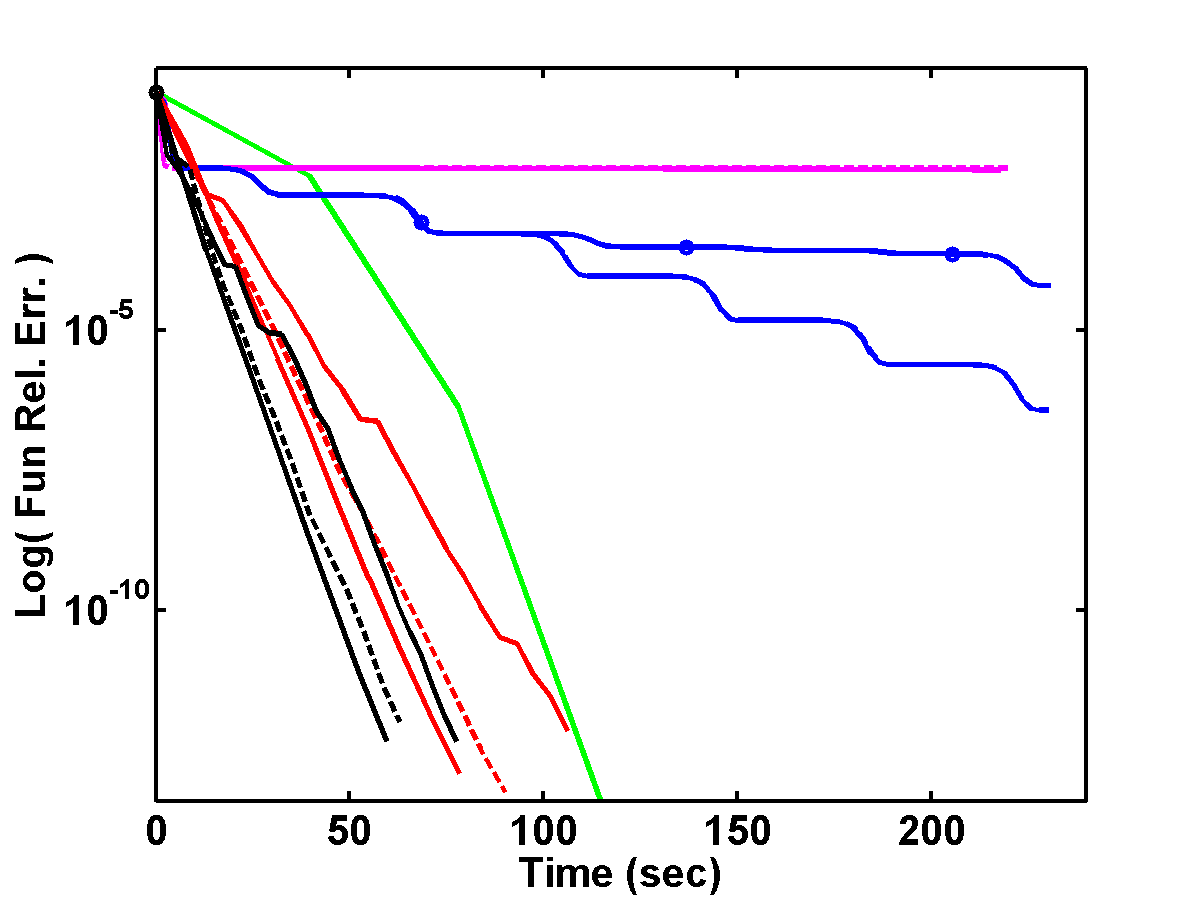}}
    \end{minipage}
		\begin{minipage}[htb]{0.32\linewidth}
        \subfigure[Iterate Rel.\ Err.\ for $D_{3}$]
        {\includegraphics[scale=0.29]{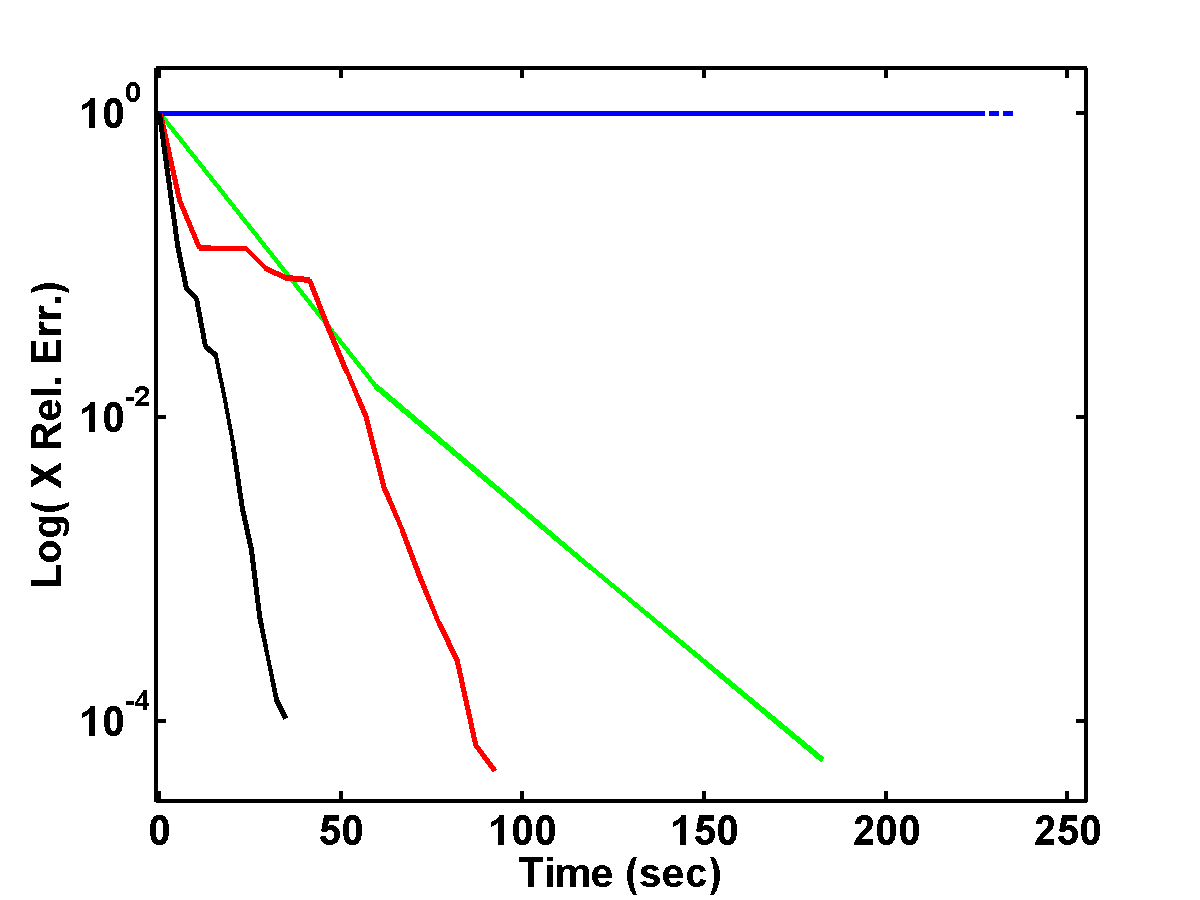}}
    \end{minipage}
    \begin{minipage}[htb]{0.32\linewidth}
        \subfigure[Function Rel.\ Err.\ for $D_{3}$]
        {\includegraphics[scale=0.29]{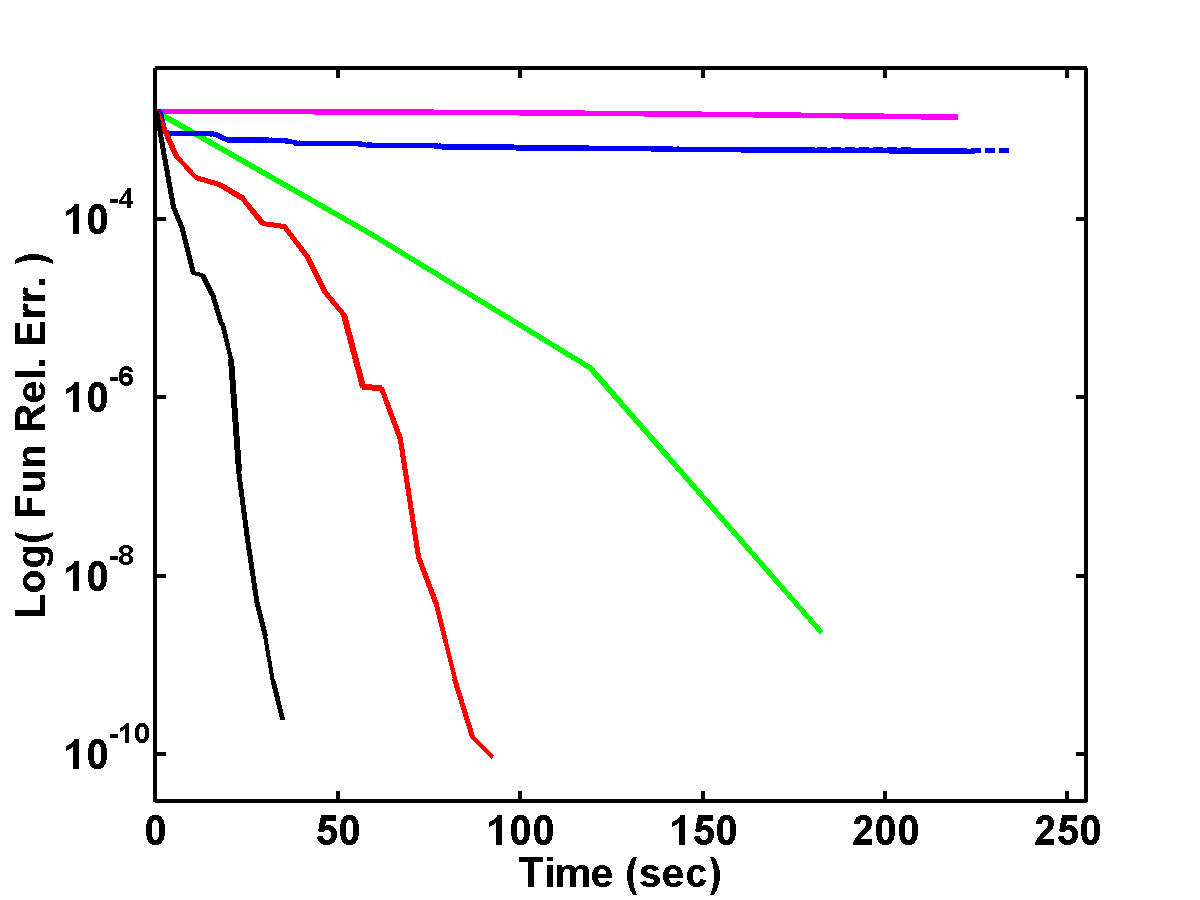}}
    \end{minipage}
		\begin{minipage}[htb]{0.2\linewidth}
        \subfigure[Legend for Figures (a)-(d)]
        {\includegraphics[scale=0.39]{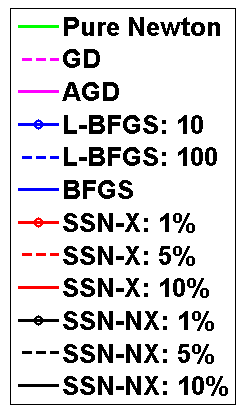}}
    \end{minipage}
		\begin{minipage}[htb]{0.2\linewidth}
        \subfigure[Legend for Figures (e)-(f)]
        {\includegraphics[scale=0.39]{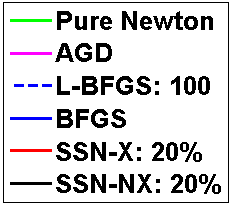}}
    \end{minipage}
    \caption{Comparison of different methods w.r.t.\ iterate and function value relative errors for $\ell_{2}$-regularized LR using data sets $D_{1}$, $D_{2}$ and $D_{3}$ as described in Table~\ref{synth_data}. The percentage values, e.g., SSN-X: $10\%$, refer to sub-sampling ratio, i.e., $|\mathcal{S}|/n$.}
\label{fig_results}
\end{figure}

\section{Conclusion}
\label{sec:conclusion}
In this paper, we studied globally convergent sub-sampled Newton algorithms for unconstrained optimization in various settings. In the first part of the paper, we studied the case in which only the Hessian is sub-sampled and the full gradient is used. In this setting, we showed that using random matrix concentration result, it is possible to, probabilistically, guarantee that the sub-sampled Hessian yields a descent direction at every iteration. We then provided global convergence for modifications of this algorithms where the sub-sampled Hessian is regularized by changing its spectrum or ridge-type regularization.  We argued that such regularization can only be beneﬁcial at early stages of the algorithm and we need to revert to using the true sub-sampled Hessian (of course, if it is invertible) as iterates get closer to the optimum. 

In the second part of the paper, we considered the global convergence of a fully stochastic algorithm in which both the Hessian and the gradients are sub-sampled, independently of each other, as way to further reduce the computational complexity per iteration. We use approximate matrix multiplication results from RandNLA to obtain the proper sampling strategy. 

In all of these algorithms, computing the update boils down to solving a large scale linear system which can be the bottleneck of computations. As a result, for all of our algorithms, in addition to giving global convergence results for the case where such linear system is solved exactly, we give similar results for the case of inexact update, where the arsing linear system is solved only approximately. In addition, we gave sufficient conditions on the accuracy tolerance to guarantee faster convergence results. In fact, we showed that the accuracy tolerance needs only to be in the order of $\mathcal{O}(\sqrt{1/\tilde{\kappa}})$, to guarantee such faster rate, where $\tilde{\kappa}$ is the sampling condition number of the problem.

Although our main focus here was merely to provide global convergence guarantees for sub-sampled Newton methods under a variety of settings, admittedly, one major downside of the bounds presented in this paper is that they are all pessimistic. In fact, some of the bounds of the present paper exhibit a dependence on the condition-number which is very discouraging. However, the consolation lies in combining the global  results presented here and the corresponding local convergence rates of the companion paper, SSN2~\cite{romassn2}. Indeed, in SSN2~\cite{romassn2}, we show that, using Newton's ``natural'' step-size of $\alpha_{k}=1$, such sub-sampled Newton algorithms enjoy local convergence rates which are \textit{condition-number independent}. In addition, we show that, through controlling the sub-sampling accuracy, one can make the local convergence speed of such algorithms as close to that of the full Newton's method as desired (though we stay shy of obtaining its famous quadratic rate). Consequently through such combination of the results of the two companion papers, we guaranteed that, ultimately, the convergence rate of the algorithms of the present paper which use exact update, becomes condition-number independent. In addition, using the Armijo rule, the ``natural'' step size of $\alpha_{k} = 1$ will eventually be  always accepted.

Finally, despite the fact that we considered the global convergence behavior of the algorithms which incorporate inexact updates, the local convergence properties of such algorithms are not known. The results of SSN2~\cite{romassn2} only address such properties for the algorithms which use exact update. As a result, studying local convergence behavior of such inexact algorithms is left for future work. In addition, here, the global convergence have been established for algorithms for solving unconstrained optimization. SSN2~\cite{romassn2} considers the general case of constrained optimization, but only in studying the local convergence rates of the presented algorithms. As a result, extensions of global convergence guarantees  to convex constrained problems are important avenues for future research.


%
\bibliographystyle{plain}
\bibliography{../../../biblio}

\begin{thebibliography}{10}

\bibitem{aravkin2012robust}
Aleksandr Aravkin, Michael~P. Friedlander, Felix~J. Herrmann, and Tristan
  Van~Leeuwen.
\newblock Robust inversion, dimensionality reduction, and randomized sampling.
\newblock {\em Mathematical Programming}, 134(1):101--125, 2012.

\bibitem{armijo1966minimization}
Larry Armijo et~al.
\newblock Minimization of functions having {L}ipschitz continuous first partial
  derivatives.
\newblock {\em Pacific Journal of mathematics}, 16(1):1--3, 1966.

\bibitem{avron2010blendenpik}
Haim Avron, Petar Maymounkov, and Sivan Toledo.
\newblock Blendenpik: Supercharging {LAPACK}'s least-squares solver.
\newblock {\em SIAM Journal on Scientific Computing}, 32(3):1217--1236, 2010.

\bibitem{bertsekas1999nonlinear}
Dimitri~P. Bertsekas.
\newblock Nonlinear programming.
\newblock 1999.

\bibitem{bertsekas1996neuro}
Dimitri~P. Bertsekas and John~N. Tsitsiklis.
\newblock {\em Neuro-dynamic Programming}.
\newblock Athena Scientific, 1996.

\bibitem{bottou1998online}
L{\'e}on Bottou.
\newblock Online learning and stochastic approximations.
\newblock {\em On-line learning in neural networks}, 17(9):25, 1998.

\bibitem{bottou2010large}
L{\'e}on Bottou.
\newblock Large-scale machine learning with stochastic gradient descent.
\newblock In {\em Proceedings of COMPSTAT'2010}, pages 177--186. Springer,
  2010.

\bibitem{le2004large}
L{\'e}on Bottou and Yann Le{C}un.
\newblock Large scale online learning.
\newblock {\em Advances in neural information processing systems}, 16:217,
  2004.

\bibitem{boyd2004convex}
Stephen Boyd and Lieven Vandenberghe.
\newblock {\em Convex optimization}.
\newblock Cambridge university press, 2004.

\bibitem{byrd2011use}
Richard~H. Byrd, Gillian~M. Chin, Will Neveitt, and Jorge Nocedal.
\newblock On the use of stochastic {H}essian information in optimization
  methods for machine learning.
\newblock {\em SIAM Journal on Optimization}, 21(3):977--995, 2011.

\bibitem{byrd2012sample}
Richard~H. Byrd, Gillian~M. Chin, Jorge Nocedal, and Yuchen Wu.
\newblock Sample size selection in optimization methods for machine learning.
\newblock {\em Mathematical programming}, 134(1):127--155, 2012.

\bibitem{byrd2013inexact}
Richard~H. Byrd, Jorge Nocedal, and Figen Oztoprak.
\newblock An inexact successive quadratic approximation method for convex {L}-1
  regularized optimization.
\newblock {\em arXiv preprint arXiv:1309.3529}, 2013.

\bibitem{cauchy1847methode}
Augustin Cauchy.
\newblock M{\'e}thode g{\'e}n{\'e}rale pour la r{\'e}solution des systemes
  d’{\'e}quations simultan{\'e}es.
\newblock {\em Comp. Rend. Sci. Paris}, 25(1847):536--538, 1847.

\bibitem{cotter2011better}
Andrew Cotter, Ohad Shamir, Nati Srebro, and Karthik Sridharan.
\newblock Better mini-batch algorithms via accelerated gradient methods.
\newblock In {\em Advances in neural information processing systems}, pages
  1647--1655, 2011.

\bibitem{dembo1982inexact}
Ron~S Dembo, Stanley~C Eisenstat, and Trond Steihaug.
\newblock Inexact {N}ewton methods.
\newblock {\em SIAM Journal on Numerical analysis}, 19(2):400--408, 1982.

\bibitem{doas12}
Kees van~den Doel and Uri Ascher.
\newblock Adaptive and stochastic algorithms for {EIT} and {DC} resistivity
  problems with piecewise constant solutions and many measurements.
\newblock {\em SIAM J. Scient. Comput.}, 34:DOI: 10.1137/110826692, 2012.

\bibitem{drineas2006fast}
Petros Drineas, Ravi Kannan, and Michael~W Mahoney.
\newblock Fast {M}onte {C}arlo algorithms for matrices {I}: Approximating
  matrix multiplication.
\newblock {\em SIAM Journal on Computing}, 36(1):132--157, 2006.

\bibitem{drineas2012fast}
Petros Drineas, Malik Magdon-Ismail, Michael~W Mahoney, and David~P Woodruff.
\newblock Fast approximation of matrix coherence and statistical leverage.
\newblock {\em The Journal of Machine Learning Research}, 13(1):3475--3506,
  2012.

\bibitem{drineas2006sampling}
Petros Drineas, Michael~W Mahoney, and S~Muthukrishnan.
\newblock Sampling algorithms for $\ell_2$ regression and applications.
\newblock In {\em Proceedings of the seventeenth annual ACM-SIAM symposium on
  Discrete algorithm}, pages 1127--1136. Society for Industrial and Applied
  Mathematics, 2006.

\bibitem{drineas2011faster}
Petros Drineas, Michael~W Mahoney, S~Muthukrishnan, and Tam{\'a}s Sarl{\'o}s.
\newblock Faster least squares approximation.
\newblock {\em Numerische Mathematik}, 117(2):219--249, 2011.

\bibitem{erdogdu2015convergence}
Murat~A. Erdogdu and Andrea Montanari.
\newblock Convergence rates of sub-sampled newton methods.
\newblock In {\em Advances in Neural Information Processing Systems 28}, pages
  3034--3042. 2015.

\bibitem{haber2014simultaneous}
Eldad Haber and Mathias Chung.
\newblock Simultaneous source for non-uniform data variance and missing data.
\newblock {\em arXiv preprint arXiv:1404.5254}, 2014.

\bibitem{halko2011finding}
Nathan Halko, Per-Gunnar Martinsson, and Joel~A. Tropp.
\newblock Finding structure with randomness: Probabilistic algorithms for
  constructing approximate matrix decompositions.
\newblock {\em SIAM review}, 53(2):217--288, 2011.

\bibitem{johnson2013accelerating}
Rie Johnson and Tong Zhang.
\newblock Accelerating stochastic gradient descent using predictive variance
  reduction.
\newblock In {\em Advances in Neural Information Processing Systems}, pages
  315--323, 2013.

\bibitem{levenberg1944algorithm}
Kenneth Levenberg.
\newblock A method for the solution of certain problems in least squares.
\newblock {\em Quarterly of Applied Mathematics}, 2(2):164--168, 1944.

\bibitem{li2014efficient}
Mu~Li, Tong Zhang, Yuqiang Chen, and Alexander~J Smola.
\newblock Efficient mini-batch training for stochastic optimization.
\newblock In {\em Proceedings of the 20th ACM SIGKDD international conference
  on Knowledge discovery and data mining}, pages 661--670. ACM, 2014.

\bibitem{lin2008trust}
Chih-Jen Lin, Ruby~C. Weng, and S.~Sathiya Keerthi.
\newblock Trust region {N}ewton method for logistic regression.
\newblock {\em The Journal of Machine Learning Research}, 9:627--650, 2008.

\bibitem{liu1989limited}
Dong~C Liu and Jorge Nocedal.
\newblock On the limited memory {BFGS} method for large scale optimization.
\newblock {\em Mathematical programming}, 45(1-3):503--528, 1989.

\bibitem{mahoney2011randomized}
Michael~W Mahoney.
\newblock Randomized algorithms for matrices and data.
\newblock {\em Foundations and Trends{\textregistered} in Machine Learning},
  3(2):123--224, 2011.

\bibitem{marquardt1963algorithm}
Donald~W Marquardt.
\newblock An algorithm for least-squares estimation of nonlinear parameters.
\newblock {\em Journal of the Society for Industrial \& Applied Mathematics},
  11(2):431--441, 1963.

\bibitem{martens2010deep}
James Martens.
\newblock Deep learning via {H}essian-free optimization.
\newblock In {\em Proceedings of the 27th International Conference on Machine
  Learning (ICML-10)}, pages 735--742, 2010.

\bibitem{mccullagh1989generalized}
Peter McCullagh and John~A. Nelder.
\newblock {\em Generalized linear models}, volume~37.
\newblock CRC press, 1989.

\bibitem{meng2014lsrn}
Xiangrui Meng, Michael~A Saunders, and Michael~W Mahoney.
\newblock {LSRN}: A parallel iterative solver for strongly over-or
  underdetermined systems.
\newblock {\em SIAM Journal on Scientific Computing}, 36(2):C95--C118, 2014.

\bibitem{nash2000survey}
Stephen~G Nash.
\newblock A survey of truncated-{N}ewton methods.
\newblock {\em Journal of Computational and Applied Mathematics},
  124(1):45--59, 2000.

\bibitem{nesterov2004introductory}
Yurii Nesterov.
\newblock {\em Introductory lectures on convex optimization}, volume~87.
\newblock Springer Science \& Business Media, 2004.

\bibitem{nocedal1980updating}
Jorge Nocedal.
\newblock Updating quasi-{N}ewton matrices with limited storage.
\newblock {\em Mathematics of computation}, 35(151):773--782, 1980.

\bibitem{nocedal2006numerical}
Jorge Nocedal and Stephen Wright.
\newblock {\em Numerical optimization}.
\newblock Springer Science \& Business Media, 2006.

\bibitem{pilanci2015newton}
Mert Pilanci and Martin~J. Wainwright.
\newblock Newton sketch: A linear-time optimization algorithm with
  linear-quadratic convergence.
\newblock {\em arXiv preprint arXiv:1505.02250}, 2015.

\bibitem{robbins1951stochastic}
Herbert Robbins and Sutton Monro.
\newblock A stochastic approximation method.
\newblock {\em The annals of mathematical statistics}, pages 400--407, 1951.

\bibitem{romassn2}
Farbod Roosta-Khorasani and Michael~W. Mahoney.
\newblock Sub-sampled {N}ewton methods {II}: Local convergence rates.
\newblock 2016.
\newblock arXiv 1601.04738.

\bibitem{rodoas1}
Farbod Roosta-Khorasani, Kees van~den Doel, and Uri Ascher.
\newblock Stochastic algorithms for inverse problems involving {PDE}s and many
  measurements.
\newblock {\em SIAM J. Scientific Computing}, 36(5):S3--S22, 2014.

\bibitem{scheinberg2013practical}
Katya Scheinberg and Xiaocheng Tang.
\newblock Practical inexact proximal quasi-{N}ewton method with global
  complexity analysis.
\newblock {\em arXiv preprint arXiv:1311.6547}, 2013.

\bibitem{schmidt2013minimizing}
Mark Schmidt, Nicolas~L. Roux, and Francis~R. Bach.
\newblock Minimizing finite sums with the stochastic average gradient.
\newblock {\em arXiv preprint arXiv:1309.2388}, 2013.

\bibitem{schmidt2009optimizing}
Mark~W. Schmidt, Ewout Berg, Michael~P. Friedlander, and Kevin~P. Murphy.
\newblock Optimizing costly functions with simple constraints: A limited-memory
  projected quasi-{N}ewton algorithm.
\newblock In {\em International Conference on Artificial Intelligence and
  Statistics}, page None, 2009.

\bibitem{senior2013empirical}
Alan Senior, Georg Heigold, Marc'Aurelio Ranzato, and Ke~Yang.
\newblock An empirical study of learning rates in deep neural networks for
  speech recognition.
\newblock In {\em Acoustics, Speech and Signal Processing (ICASSP), 2013 IEEE
  International Conference on}, pages 6724--6728. IEEE, 2013.

\bibitem{shewchuk1994introduction}
Jonathan~Richard Shewchuk.
\newblock An introduction to the conjugate gradient method without the
  agonizing pain, 1994.

\bibitem{tropp2011improved}
Joel~A. Tropp.
\newblock Improved analysis of the subsampled randomized {H}adamard transform.
\newblock {\em Advances in Adaptive Data Analysis}, 3(01n02):115--126, 2011.

\bibitem{tropp2012user}
Joel~A. Tropp.
\newblock User-friendly tail bounds for sums of random matrices.
\newblock {\em Foundations of Computational Mathematics}, 12(4):389--434, 2012.

\bibitem{van2013lost}
Kees Van Den~Doel, Uri Ascher, and Eldad Haber.
\newblock The lost honour of $\ell_2$-based regularization.
\newblock {\em Radon Series in Computational and Applied Math}, 2013.

\bibitem{yang2015implementing}
Jiyan Yang, Xiangrui Meng, and Michael~W Mahoney.
\newblock Implementing randomized matrix algorithms in parallel and distributed
  environments.
\newblock {\em Proceedings of the IEEE}, 2015.
\newblock Accepted for publication.

\bibitem{yu2010quasi}
Jin Yu, SVN Vishwanathan, Simon G{\"u}nter, and Nicol~N. Schraudolph.
\newblock A quasi-{N}ewton approach to nonsmooth convex optimization problems
  in machine learning.
\newblock {\em The Journal of Machine Learning Research}, 11:1145--1200, 2010.

\end{thebibliography}

\appendix
\section{Proofs}
 
\subsection{Proofs of Section~\ref{sec:subsampl_newton_hess}}
\begin{proof}[Proof of Lemma~\ref{chernoff_lemma}]
Consider $|\mathcal{S}|$ i.i.d random matrices $X_{j}(\xx), j=1,2,\ldots,|\mathcal{S}|$ such that $\Pr(X_{j}(\xx) = \nabla^{2} f_{i}(\xx)) = 1/n; \; \forall i = 1,2,\ldots,n,$. Define
\begin{eqnarray*}
H(\xx) &\defeq& \frac{1}{|\mathcal{S}|}\sum_{j \in \mathcal{S}} X_{j}(\xx) , \\
X &\defeq& \sum_{j \in \mathcal{S}} X_{j} = |\mathcal{S}| H(\xx).
\end{eqnarray*} 
Note that $X_{j} \succeq 0$, $\Ex(X_{j}) = \nabla^{2} F(\xx) $ and 
\begin{eqnarray*}
\lambda_{\max} \Big( X_{j}\Big) &\leq& \widehat{K}_{1}, \\
\lambda_{\min} \Big(\sum_{j \in \mathcal{S}} \Ex(X_{j})\Big) &=& |\mathcal{S}| \lambda_{\min} \left( \nabla^{2} F(\xx) \right) \geq |\mathcal{S}| \gamma,
\end{eqnarray*}
where $\widehat{K}_{1}$ is defined in~\eqref{K_q}.
Hence we can apply Matrix Chernoff~\cite[Theorem 1.1]{tropp2012user} or~\cite[Theorem 2.2]{tropp2011improved} for sub-sampling with or without replacement, respectively, to get
\begin{eqnarray*}
\Pr\Big(\lambda_{\min}(X) \leq (1-\epsilon) |\mathcal{S}| \lambda_{\min} \left(  \nabla^{2} F(\xx) \right)\Big) 
\leq p \left[ \frac{e^{-\epsilon}}{(1-\epsilon)^{(1-\epsilon)}}\right]^{\mathcal{|S|}\gamma/\widehat{K}_{1}}.
\end{eqnarray*}
Now the result follows by noting that
\begin{equation*}
\frac{e^{-\epsilon}}{(1-\epsilon)^{(1-\epsilon)}} \leq e^{-\epsilon^{2}/2},
\end{equation*}
and requiring that
\begin{equation*}
e^{-\epsilon^{2} \mathcal{|S|}/(2\kappa_{1})} \leq \delta.
\end{equation*}
\qed
\end{proof}

\begin{proof}[Proof of Theorem~\ref{global_newton}:]
First note that by~\eqref{eig_chernoff}, we have
\begin{equation*}
\pp_{k}^{T} H(\xx^{(k)}) \pp_{k} \geq (1-\epsilon) \gamma \|\pp_{k}\|^{2},
\end{equation*}
which from
\begin{equation*}
\pp_{k}^{T} \nabla F(\xx^{(k)}) =  - \pp_{k}^{T} H(\xx^{(k)}) \pp_{k},
\end{equation*}
implies that $\pp_{k}^{T} \bgg(\xx^{(k)}) < 0$ and we can indeed obtain decrease in the objective function.

Now, it suffices to show that there exists an \textit{iteration-independent} $\widetilde{\alpha} > 0$, such that the constrain in~\eqref{global_alpha} holds for any $0 \leq \alpha \leq \widetilde{\alpha}$. For any $0\leq \alpha < 1$, define $\xx_{\alpha} = \xx^{(k)} + \alpha \pp_{k}$. By Assumption~\eqref{F_strong_gen}, we have

\begin{eqnarray*}
F(\xx_{\alpha}) - F(\xx^{(k)}) &\leq& (\xx_{\alpha} - \xx^{(k)})^{T}\nabla F(\xx^{(k)}) + \frac{K}{2} \|\xx_{\alpha} - \xx^{(k)}\|^{2} \\
&=& \alpha \pp_{k}^{T}\nabla F(\xx^{(k)}) + \alpha^{2} \frac{K}{2} \|\pp_{k}\|^{2}
\end{eqnarray*}
Now in order to pass the Armijo rule, we search for $\alpha$ such that
\begin{equation*}
\alpha \pp_{k}^{T}\nabla F(\xx^{(k)}) + \alpha^{2} \frac{K}{2} \|\pp_{k}\|^{2} \leq \alpha \beta \pp_{k}^{T}\nabla F(\xx^{(k)}),
\end{equation*}
which, in turn, gives
\begin{equation*}
\alpha \frac{K}{2} \|\pp_{k}\|^{2} \leq -(1-\beta) \pp_{k}^{T}\nabla F(\xx^{(k)}).
\end{equation*}
This latter inequality is satisfied if we require that
\begin{equation*}
\alpha \frac{K}{2} \|\pp_{k}\|^{2} \leq (1-\beta) \pp_{k}^{T} H(\xx^{(k)}) \pp_{k}.
\end{equation*}
As a result, having
\begin{equation*}
\alpha   \leq \frac{2(1-\beta) (1-\epsilon)}{\kappa},
\end{equation*}
satisfies the Armijo rule. So in particular, we can always find an iteration independent lower bound on step size such that the constrain in~\eqref{global_alpha} holds. On the other hand, for sampling \textit{without} replacement and from $H(\xx^{(k)}) \pp_{k} = -\nabla F(\xx^{(k)})$ we get
\begin{equation*}
\pp_{k}^{T} H(\xx^{(k)}) \pp_{k} = \nabla F(\xx^{(k)})^{T} [H(\xx^{(k)})]^{-1} \nabla F(\xx^{(k)}) \geq \frac{1}{\widehat{K}_{|\mathcal{S}|}} \| \nabla F(\xx^{(k)})\|^{2}.
\end{equation*} 
Similarly for sampling \textit{with} replacement, we have $$\pp_{k}^{T} H(\xx^{(k)}) \pp_{k} \geq {1}/{\widehat{K}_{1}} \| \nabla F(\xx^{(k)})\|^{2}.$$ Now the result follows immediately by noting that Assumption~\eqref{F_strong_gen} implies (see~\cite[Theorem 2.1.10]{nesterov2004introductory})
\begin{equation*}
F(\xx^{(k)}) - F(\xx^{*}) \leq \frac{1}{2 \gamma} \|\nabla F(\xx^{(k)})\|^{2}.
\end{equation*}
\qed
\end{proof}

\begin{proof}[Proof of Theorem~\ref{local_global_newton}]
The choice of $\epsilon$ is to meet a requirement of SSN2~\cite[Theorem 2]{romassn2} and account for the differences between Lemma~\ref{chernoff_lemma} and SSN2~\cite[Lemma 1]{romassn2}. 

The rest of the proof follows closely the line of argument in~\cite[Section 9.5.3]{boyd2004convex}. define $\xx_{\alpha} = \xx^{(k)} + \alpha \pp_{k}$. From~\eqref{Hess_Lip}, it follows that
\begin{equation*}
\|\nabla^{2}F(\xx_{\alpha}) - \nabla^{2}F(\xx^{(k)})\| \leq L \|\xx_{\alpha} - \xx^{(k)}\|,
\end{equation*} 
which implies that
\begin{equation*}
\pp_{k}^{T}\left(\nabla^{2}F(\xx_{\alpha}) - \nabla^{2}F(\xx^{(k)})\right) \pp_{k} \leq \alpha L \|\pp_{k}\|^{3},
\end{equation*} 
which, in turn, gives
\begin{equation*}
\pp_{k}^{T} \nabla^{2}F(\xx_{\alpha}) \pp_{k} \leq \pp_{k}^{T} \nabla^{2}F(\xx^{(k)}) \pp_{k} + \alpha L \|\pp_{k}\|^{3}.
\end{equation*} 
Defining $\widehat{F}(\alpha) \defeq F(\xx^{(k)} + \alpha \pp_{k})$, we have
\begin{equation*}
\widehat{F}^{''}(\alpha) \leq \widehat{F}^{''}(0) + \alpha L \|\pp_{k}\|^{3}.
\end{equation*}
Now we integrate this inequality to get
\begin{equation*}
\widehat{F}^{'}(\alpha) \leq \widehat{F}^{'}(0) + \alpha \widehat{F}^{''}(0) + \frac{\alpha^{2}}{2} L \|\pp_{k}\|^{3}.
\end{equation*}
Integrating one more time yields
\begin{equation*}
\widehat{F}(\alpha) \leq \widehat{F}(0) + \alpha \widehat{F}^{'}(0) + \frac{\alpha^{2}}{2} \widehat{F}^{''}(0) + \frac{\alpha^{3}}{6} L \|\pp_{k}\|^{3}.
\end{equation*}
On the other hand, we have
\begin{align*}
\|\pp_{k}\|^{2} = \|[H(\xx^{(k)})]^{-1} \nabla F(\xx^{(k)})\|^{2} \leq \frac{1}{(1-\epsilon)\gamma} \nabla F(\xx^{(k)})^{T} [H(\xx^{(k)})]^{-1} \nabla F(\xx^{(k)}),
\end{align*}
as well as $\widehat{F}^{'}(0) = \alpha \pp_{k}^{T} \nabla F(\xx^{(k)}) = - \alpha \nabla F(\xx^{(k)})^{T} [H(\xx^{(k)})]^{-1} \nabla F(\xx^{(k)})$ and
\begin{align*}
\widehat{F}^{''}(0) = \alpha^{2} \pp_{k}^{T} \nabla^{2} F(\xx^{(k)}) \pp_{k}  &= \alpha^{2} \nabla F(\xx^{(k)})^{T} [H(\xx^{(k)})]^{-1} \nabla^{2} F(\xx^{(k)}) [H(\xx^{(k)})]^{-1} \nabla F(\xx^{(k)}) \\
& \leq \frac{\alpha^{2}}{(1-\epsilon)} \nabla F(\xx^{(k)})^{T} [H(\xx^{(k)})]^{-1} \nabla F(\xx^{(k)}).
\end{align*}
The last inequality follows since by the choice of $\epsilon$ and SSN2~\cite[Lemma 5]{romassn2}, we have
\begin{align*}
\|H(\xx^{(k)}) -\nabla^{2} F(\xx^{(k)}) \| \leq \epsilon \gamma,
\end{align*}
and hence, for any $\vv$
\begin{align*}
\vv^{T} [H(\xx^{(k)})]^{-1} \nabla^{2} F(\xx^{(k)}) [H(\xx^{(k)})]^{-1} \vv - \vv^{T} [H(\xx^{(k)})]^{-1} \vv &\leq \epsilon \gamma \vv^{T} [H(\xx^{(k)})]^{-2} \vv \\
&\leq \frac{\epsilon}{1-\epsilon} \vv^{T} [H(\xx^{(k)})]^{-1} \vv,
\end{align*}
which gives
\begin{align*}
\vv^{T} [H(\xx^{(k)})]^{-1} \nabla^{2} F(\xx^{(k)}) [H(\xx^{(k)})]^{-1} \vv \leq \frac{1}{(1-\epsilon)} \vv^{T} [H(\xx^{(k)})]^{-1} \vv.
\end{align*}
Hence, with $\alpha = 1$ and denoting $c(\xx) \defeq \nabla F(\xx)^{T} [H(\xx)]^{-1} \nabla F(\xx)$, we have
\begin{align*}
F(\xx^{(k)} + \pp_{k}) &\leq F(\xx^{(k)}) + \left(\frac{1}{2 (1-\epsilon)} -1 \right) c(\xx^{(k)}) + \frac{L}{6}\left( \frac{1}{(1-\epsilon)\gamma} c(\xx^{(k)})\right)^{3/2} \\
& \leq  F(\xx^{(k)}) + c(\xx) \left(\frac{1}{2 (1-\epsilon)} -1 + \frac{L}{6} \left(\frac{1}{(1-\epsilon)\gamma}\right)^{3/2} c(\xx^{(k)})^{1/2} \right).
\end{align*}
Hence, noting that $c(\xx) \leq \|\nabla F(\xx)\|^{2}/((1-\epsilon) \gamma)$, if 
\begin{equation}
\label{grad_small}
\|\nabla F(\xx^{(k)})\| \leq  \frac{3(1-\epsilon)\gamma^{2}\big(1-2\epsilon - 2 (1-\epsilon) \beta\big)}{L},
\end{equation}
we get
\begin{align*}
F(\xx^{(k)} + \pp_{k}) \leq F(\xx^{(k)}) -\beta \nabla F(\xx)^{T} [H(\xx)]^{-1} \nabla F(\xx) = F(\xx^{(k)}) +\beta \pp_{k}^{T} \nabla F(\xx),
\end{align*}
which implies that~\eqref{global_alpha} is satisfied with $\alpha = 1$. 

The proof is complete if we can find $k$ such that both the sufficient condition of SSN2~\cite[Theorem 2]{romassn2} as well as~\eqref{grad_small} is satisfied. First, note that from Theorem~\ref{global_newton}, Assumtpion~\eqref{F_strong_gen} and by using the iteration-independent lower bound on $\alpha_{k}$, it follows that
\begin{equation*}
\|\nabla^{2}F(\xx^{(k)})\|^{2} \leq 2 K (1 - \hat{\rho})^{k} \left(F(\xx^{(0)}) - F(\xx^{*}) \right),
\end{equation*}
where
\begin{equation*}
\hat{\rho} = \frac{4 \beta  (1- \beta)(1-\epsilon) }{\tilde{\kappa} \kappa}.
\end{equation*}
In order to satisfy~\eqref{grad_small},  we require that 
\begin{equation*}
2 K (1 - \hat{\rho})^{k} \left(F(\xx^{(0)}) - F(\xx^{*}) \right) \leq \frac{4(1-\epsilon)^{2}\gamma^{4}\big(1-2\epsilon - 2 (1-\epsilon) \beta\big)^{2} (\rho_{1} - \rho_{0})^{2}}{L^{2}},
\end{equation*}
which yields~\eqref{local_k}. Again, from Theorem~\ref{global_newton} and Assumtpion~\eqref{F_strong_gen},we get
\begin{equation*}
\|\xx^{(k)} - \xx^{*}\|^{2} \leq \frac{2 (1 - \hat{\rho})^{k}}{\gamma} \left(F(\xx^{(0)}) - F(\xx^{*}) \right),
\end{equation*}
which implies that
\begin{equation*}
\|\xx^{(k)} - \xx^{*}\|^{2} \leq \frac{4(1-\epsilon)^{2}\gamma^{3}\big(1-2\epsilon - 2 (1-\epsilon) \beta\big)^{2} (\rho_{1} - \rho_{0})^{2}}{K L^{2}} \leq \frac{4(1-\epsilon)^{2}\gamma^{2}(\rho_{1} - \rho_{0})^{2}}{L^{2}},
\end{equation*}
and hence the sufficient condition of SSN2~\cite[Theorem 2]{romassn2} is also satisfied and we get~\eqref{local_rate}. \qed
\end{proof}

\begin{proof}[Proof of Theorem~\ref{global_newton_inexact}:]
We give the proof only for the case of sampling without replacement. The proof for sampling with replacement is obtained similarly. 

First, we note that~\eqref{eig_chernoff} and~\eqref{angle_inexact} imply
\begin{eqnarray}
\pp_{k}^{T} \nabla F(\xx^{(k)}) &\leq& -(1-\theta_{2}) (1-\epsilon) \gamma \|\pp_{k}\|^{2}.
\label{armijo_loose_1}
\end{eqnarray}
So, $\pp_{k}^{T} \bgg(\xx^{(k)}) < 0$ and we can indeed obtain decrease in the objective function.
As in the proof of Theorem~\eqref{global_newton}, we get
\begin{eqnarray*}
F(\xx_{\alpha}) - F(\xx^{(k)}) \leq \alpha \pp_{k}^{T}\nabla F(\xx^{(k)}) + \alpha^{2} \frac{K}{2} \|\pp_{k}\|^{2}.
\end{eqnarray*}
Hence, in order to pass the Armijo rule, we search for $\alpha$ such that
\begin{equation*}
\alpha \frac{K}{2} \|\pp_{k}\|^{2} \leq -(1-\beta) \pp_{k}^{T}\nabla F(\xx^{(k)}).
\end{equation*}
As a result, having
\begin{equation*}
\alpha   \leq \frac{2(1-\theta_{2})(1-\beta) (1-\epsilon) }{\kappa},
\end{equation*}
satisfies the Armijo rule. 

For part (i), we notice that by the self-duality of the vector $\ell_{2}$ norm, i.e.,
\begin{equation*}
\|\vv\|_{2} = \sup \{\ww^{T} \vv; \; \|\ww\|_{2}=1\},
\end{equation*}
it follows that the condition~\eqref{global_p_inexact}, implies
\begin{equation*}
\pp_{k}^{T} \nabla F(\xx^{(k)}) + \nabla F(\xx^{(k)})^{T} [H(\xx^{(k)})]^{-1} \nabla F(\xx^{(k)}) \leq \theta_{1} \|\nabla F(\xx^{(k)})\| \|[H(\xx^{(k)})]^{-1}\nabla F(\xx^{(k)})\|.
\end{equation*}
Using~\eqref{eig_chernoff}, we get
\begin{equation*}
[H(\xx^{(k)})]^{-1} \preceq \frac{1}{(1-\epsilon)\gamma},
\end{equation*}
which implies that
\begin{equation*}
\|[H(\xx^{(k)})]^{-1}\nabla F(\xx^{(k)})\| \leq \sqrt{\frac{1}{(1-\epsilon)\gamma} \nabla F(\xx^{(k)})^{T} [H(\xx^{(k)})]^{-1} \nabla F(\xx^{(k)})}.
\end{equation*}
Hence, denoting
\begin{equation*}
q \defeq \sqrt{\nabla F(\xx^{(k)})^{T} [H(\xx^{(k)})]^{-1} \nabla F(\xx^{(k)})},
\end{equation*}
we get
\begin{eqnarray*}
\pp_{k}^{T} \nabla F(\xx^{(k)}) &\leq& \frac{\theta_{1}}{\sqrt{(1-\epsilon)\gamma}} \|\nabla F(\xx^{(k)})\| q - q^{2}\\
&=& q \left( \frac{\theta_{1}}{\sqrt{(1-\epsilon)\gamma}} \|\nabla F(\xx^{(k)})\| -  q\right).
\end{eqnarray*}
Now, we require that
\begin{equation*}
\frac{\theta_{1}}{\sqrt{(1-\epsilon)\gamma}} \|\nabla F(\xx^{(k)})\| \leq \frac{q}{2},
\end{equation*}
which since
\begin{equation*}
q \geq \frac{1}{\sqrt{\widehat{K}_{|\mathcal{S}|}}} \|\nabla F(\xx^{(k)})\|,
\end{equation*}
follows if
\begin{equation*}
\theta_{1} \leq \frac{\sqrt{(1-\epsilon)\gamma}}{2\sqrt{\widehat{K}_{|\mathcal{S}|}}}.
\end{equation*}
With such $\theta_{1}$, we get
\begin{equation*}
\pp_{k}^{T} \nabla F(\xx^{(k)}) \leq - \frac{q^{2}}{2} \leq \frac{-1}{2 \widehat{K}_{|\mathcal{S}|}} \|\nabla F(\xx^{(k)})\|^{2}.
\end{equation*}

For part (ii), we have
\begin{eqnarray*}
\theta_{1} \|\nabla F(\xx^{(k)})\| &\geq& \|H(\xx^{(k)})\pp_{k} + \nabla F(\xx^{(k)})\| \\
&\geq& \|\nabla F(\xx^{(k)})\| - \|H(\xx^{(k)})\pp_{k}\|,
\end{eqnarray*}
which, in turn, implies
\begin{eqnarray*}
(1 -\theta_{1}) \|\nabla F(\xx^{(k)})\| &\leq& \|H(\xx^{(k)})\pp_{k}\| \\
&\leq& \|H(\xx^{(k)})\|\|\pp_{k}\| \\
&\leq& \widehat{K}_{|\mathcal{S}|}\|\pp_{k}\|.
\end{eqnarray*}
Hence using~\eqref{armijo_loose_1}, we get
\begin{equation*}
\pp_{k}^{T} \nabla F(\xx^{(k)}) \leq -(1-\theta_{2}) (1-\epsilon) \gamma \frac{(1 -\theta_{1})^{2}}{\widehat{K}_{|\mathcal{S}|}^{2}} \|\nabla F(\xx^{(k)})\|^{2}.
\end{equation*} 
Now the result follows, using Assumption~\eqref{F_strong_gen}, as in the end of the proof of Theorem~\ref{global_newton}.
\qed
\end{proof}

\begin{proof}[Proof of Theorem~\ref{global_newton_spectral}:]
By the choice of $\lambda^{(k)}$ and the convexity of $f_{i}$, we have $\lambda^{(k)} > 0$ and, so by~\eqref{angle_inexact} it gives 
\begin{equation*}
\pp_{k}^{T} \nabla F(\xx^{(k)}) \leq -(1-\theta_{2}) \pp_{k}^{T} \hat{H}(\xx^{(k)}) \pp_{k} \leq - (1-\theta_{2})\lambda^{(k)} \|\pp_{k}\|^{2}.
\end{equation*}
So it follows that $\pp_{k}^{T} \nabla F(\xx^{(k)}) < 0$ and we can indeed obtain decrease in the objective function.
Now as before, in order to pass the Armijo rule, we search for $\alpha$ such that
\begin{equation*}
\alpha \frac{K}{2} \|\pp_{k}\|^{2} \leq -(1-\beta) \pp_{k}^{T}\nabla F(\xx^{(k)}),
\end{equation*}
which, in turn, is satisfied if 
\begin{equation*}
\alpha   \leq \frac{2(1-\theta_{2}) (1-\beta)\lambda^{(k)}}{K}.
\end{equation*} 
Now the rest of the proof is similar to that of Theorem~\ref{global_newton_inexact} by noting that
\begin{equation*}
[\hat{H}(\xx^{(k)})]^{-1} \preceq \frac{1}{\lambda^{(k)}},
\end{equation*}
and
\begin{equation*}
\nabla F(\xx^{(k)})^{T} [\hat{H}(\xx^{(k)})]^{-1} \nabla F(\xx^{(k)}) \geq \frac{1}{\max\{\widehat{K}_{|\mathcal{S}|},\lambda^{(k)}\}} \|\nabla F(\xx^{(k)})\|^{2}.
\end{equation*}
\qed
\end{proof}

\begin{proof}[Proof of Theorem~\ref{global_newton_ridge}:]
As before, by the choice of $\lambda > 0$, convexity of $f_{i}$, and~\eqref{angle_inexact} we get
\begin{equation*}
\pp_{k}^{T} \nabla F(\xx^{(k)}) \leq -(1-\theta_{2}) \pp_{k}^{T} \hat{H}(\xx^{(k)}) \pp_{k} \leq - (1-\theta_{2})\lambda \|\pp_{k}\|^{2}.
\end{equation*}
which implies that $\pp_{k}^{T} \bgg(\xx^{(k)}) < 0$ and we can indeed obtain decrease in the objective function. Similarly to the proof previous theorems, it is easy to see that 
\begin{equation*}
\alpha   \leq \frac{2(1-\theta_{2})(1-\beta)\lambda}{K},
\end{equation*}
satisfies the Armijo rule. The rest of the results also follow as in the proof of Theorem~\ref{global_newton_inexact} by noting that
\begin{equation*}
[\hat{H}(\xx^{(k)})]^{-1} \preceq \frac{1}{\lambda},
\end{equation*}
and
\begin{equation*}
\nabla F(\xx^{(k)})^{T} [\hat{H}(\xx^{(k)})]^{-1} \nabla F(\xx^{(k)}) \geq \frac{1}{\widehat{K}_{|\mathcal{S}|} + \lambda} \|\nabla F(\xx^{(k)})\|^{2}.
\end{equation*}
\qed
\end{proof}

\subsection{Proofs of Section~\ref{sec:subsampl_newton_hess_grad}}
The proof of the following lemma, in a more general format for constrained optimization, is given in the companion paper, SSN2~\cite[Lemma 4]{romassn2}. However, it is given here as well for completeness.
\begin{proof}[Proof of Lemma~\ref{randnla_lemma}]
As mentioned before, the full gradient, $\nabla F(\xx)$, can be equivalently written as a product of two matrices as $\nabla F(\xx) = A B$, where
\begin{eqnarray*}
A &\defeq& \begin{pmatrix}
\mid & \mid & & \mid \\
\nabla f_{1}(\xx) & \nabla f_{2}(\xx) & \cdots & \nabla f_{n}(\xx)\\
\mid & \mid & & \mid \\
\end{pmatrix} \in \mathbb{R}^{p \times n}, \\
B &\defeq& \left(1/n,1/n,\ldots,1/n\right)^{T} \in \mathbb{R}^{n \times 1}.
\end{eqnarray*}
As a result, approximating the gradient using sub-sampling is equivalent to approximating the product $AB$ by sampling columns and rows of A and B, respectively, and forming matrices $\widehat{A}$ and $\widehat{B}$ such $\widehat{A}\widehat{B} \approx AB$. More precisely, for a random sampling index set $\mathcal{S}$, we can represent the sub-sampled gradient~\eqref{subsampled_G}, by the product $\widehat{A} \widehat{B}$ where $\widehat{A} \in \mathbb{R}^{p \times |\mathcal{S}|}$ and $\widehat{B} \in \mathbb{R}^{|\mathcal{S}| \times 1}$ are formed by selecting uniformly at random and with replacement, $|\mathcal{S}|$ columns and rows of $A$ and $B$, respectively, rescaled by $\sqrt{n/|\mathcal{S}|}$. Now, by the assumption on $G(\xx)$, we can use~\cite[Lemma 11]{drineas2006fast} to get
\begin{equation*}
\|AB - \widehat{A}\widehat{B} \|_{F} = \|\nabla F(\xx) - \bgg(\xx) \| \leq \frac{G(\xx)}{\sqrt{|\mathcal{S}|}} \Big( 1 + \sqrt{8 \ln \frac{1}{\delta}}\Big),
\end{equation*}
with probability $1-\delta$. Now the result follows by requiring that 
\begin{equation*}
\frac{G(\xx)}{\sqrt{|\mathcal{S}|}} \Big( 1 + \sqrt{8 \ln \frac{1}{\delta}}\Big) \leq \epsilon.
\end{equation*}
\qed
\end{proof}

\begin{proof}[Proof of Theorem~\ref{global_newton_grad}:]
We give the proof only for the case of sampling without replacement. The proof for sampling with replacement is obtained similarly.

As in the proof of Theorem~\ref{global_newton}, we first need to show that there exists an iteration-independent step-size, $\widetilde{\alpha} > 0$, such that the constrain in~\eqref{global_alpha_grad} holds for any $0 \leq \alpha \leq \widetilde{\alpha}$. For any $0 \leq \alpha < 1$, define $\xx_{\alpha} = \xx^{(k)} + \alpha \pp_{k}$. By Assumption~\eqref{F_strong_gen}, we have

\begin{eqnarray*}
F(\xx_{\alpha}) - F(\xx^{(k)}) &\leq& (\xx_{\alpha} - \xx^{(k)})^{T}\nabla F(\xx^{(k)}) + \frac{K}{2} \|\xx_{\alpha} - \xx^{(k)}\|^{2} \\
&=& \alpha \pp_{k}^{T}\nabla F(\xx^{(k)}) + \alpha^{2} \frac{K}{2} \|\pp_{k}\|^{2} \\
&=& \alpha \pp_{k}^{T} \bgg(\xx^{(k)}) + \alpha \pp_{k}^{T}(\nabla F(\xx^{(k)}) - \bgg(\xx^{(k)})) + \alpha^{2} \frac{K}{2} \|\pp_{k}\|^{2} \\
&\leq& \alpha \pp_{k}^{T} \bgg(\xx^{(k)}) + \alpha\|\nabla F(\xx^{(k)}) - \bgg(\xx^{(k)})\| \|\pp_{k}\| + \alpha^{2} \frac{K}{2} \|\pp_{k}\|^{2} \\
&\leq& \alpha \pp_{k}^{T} \bgg(\xx^{(k)}) + \epsilon_{2} \alpha\|\pp_{k}\| + \alpha^{2} \frac{K}{2} \|\pp_{k}\|^{2}.
\end{eqnarray*}
By~\eqref{eig_chernoff} and~\eqref{global_p}, we have 
\begin{equation*}
\pp_{k}^{T} \bgg(\xx^{(k)}) = - \pp_{k}^{T} H(\xx^{(k)}) \pp_{k} \geq -(1-\epsilon_{1}) \gamma \|\pp_{k}\|^{2},
\end{equation*}
which shows that $\pp_{k}^{T} \bgg(\xx^{(k)}) < 0$ and we can indeed obtain decrease in the objective function.
Now, using the above, it follows that
\begin{eqnarray*}
F(\xx_{\alpha}) - F(\xx^{(k)}) \leq -\alpha \pp_{k}^{T} H(\xx^{(k)}) \pp_{k} + \alpha \epsilon_{2} \|\pp_{k}\|  + \alpha^{2} \frac{K}{2} \|\pp_{k}\|^{2}.
\end{eqnarray*}
As a result, we need to search for $\alpha$ such that
\begin{equation*}
-\alpha \pp_{k}^{T} H(\xx^{(k)}) \pp_{k} + \epsilon_{2} \alpha \|\pp_{k}\|  + \alpha^{2} \frac{K}{2} \|\pp_{k}\|^{2} \leq -\alpha \beta\pp_{k}^{T} H(\xx^{(k)}) \pp_{k},
\end{equation*}
which follows if 
\begin{equation*}
\epsilon_{2}   + \alpha \frac{K}{2} \|\pp_{k}\| \leq (1 -\beta )(1-\epsilon_{1}) \gamma \|\pp_{k}\|.
\end{equation*}
This latter inequality holds if 
\begin{eqnarray*}
\alpha &=& \frac{(1- \beta)(1-\epsilon_{1})\gamma}{K}, \\
\epsilon_{2}   &=& \frac{(1 -\beta )(1-\epsilon_{1}) \gamma}{2} \|\pp_{k}\|.
\end{eqnarray*}
Hence, from $H(\xx^{(k)}) \pp_{k} = -\bgg(\xx^{(k)})$, it follows that in order to guarantee an iteration independent lower bound for $\alpha$ as above, we need to have
\begin{equation*}
\epsilon_{2}   \leq \frac{(1 -\beta )(1-\epsilon_{1}) \gamma \|\bgg(\xx^{(k)})\| }{2  \widehat{K}_{|\mathcal{S}|} },
\end{equation*}
which, by the choice of $\sigma$ and $\epsilon_{1}$, is imposed by the algorithm. If the stopping criterion succeeds, then by
\begin{equation*}
\|\bgg(\xx^{(k)})\| \geq \|\nabla F(\xx^{(k)})\| - \epsilon_{2} 
\end{equation*}
it follows that, 
\begin{equation*}
\|\nabla F(\xx^{(k)})\| < \left(1 + \sigma \right)\epsilon_{2}.
\end{equation*}
However, if the stopping criterion fails and the algorithm is allowed to continue, then by
\begin{equation*}
\|\bgg(\xx^{(k)})\| \leq \|\nabla F(\xx^{(k)})\| + \epsilon_{2}, 
\end{equation*}
it follows that 
\begin{eqnarray*}
\left(\sigma - 1\right) \epsilon_{2}   \leq \|\nabla F(\xx^{(k)})\|.
\end{eqnarray*}
Now, since $\sigma \geq 4$, we get that 
\begin{eqnarray*}
\frac{2}{3} \|\nabla F(\xx^{(k)})\| \leq   \left(\frac{\sigma-2}{\sigma - 1 }\right) \|\nabla F(\xx^{(k)})\|  \leq \|\nabla F(\xx^{(k)})\| - \epsilon_{2}.
\end{eqnarray*}
Hence, from $H(\xx^{(k)}) \pp_{k} = -\bgg(\xx^{(k)})$, we get
\begin{eqnarray*}
\pp_{k}^{T} H(\xx^{(k)}) \pp_{k} &=& \bgg(\xx^{(k)})^{T} [H(\xx^{(k)})]^{-1} \bgg(\xx^{(k)}) \\
&\geq& \frac{1}{ \widehat{K}_{|\mathcal{S}|}} \|\bgg(\xx^{(k)})\|^{2} \\
&\geq& \frac{1}{ \widehat{K}_{|\mathcal{S}|}} \left(\|\nabla F(\xx^{(k)})\| - \epsilon_{2}\right)^{2} \\
&\geq& \frac{4}{ 9\widehat{K}_{|\mathcal{S}|}} \|\nabla F(\xx^{(k)})\|^{2}.
\end{eqnarray*}
Finally, using Assumption~\eqref{F_strong_gen}, the desired result follows as in the end of the proof of Theorem~\ref{global_newton}.\qed
\end{proof}

\begin{proof}[Proof of Theorem~\ref{local_global_newton_grad}]
The choice of $\epsilon_{1}$ and $\epsilon_{2}^{(k)}$ is to meet a requirement of SSN2~\cite[Theorem 13]{romassn2} and account for the differences between Lemma~\ref{chernoff_lemma} and SSN2~\cite[Lemma 1]{romassn2}. 

As in the proof of Theorem~\ref{local_global_newton}, we get
\begin{equation*}
\widehat{F}(\alpha) \leq \widehat{F}(0) + \alpha \widehat{F}^{'}(0) + \frac{\alpha^{2}}{2} \widehat{F}^{''}(0) + \frac{\alpha^{3}}{6} L \|\pp_{k}\|^{3}.
\end{equation*}
On the other hand, we have
\begin{align*}
\|\pp_{k}\|^{2} = \|[H(\xx^{(k)})]^{-1} \bgg(\xx^{(k)})\|^{2} \leq \frac{1}{(1-\epsilon_{1})\gamma} \bgg(\xx^{(k)})^{T} [H(\xx^{(k)})]^{-1} \bgg(\xx^{(k)}).
\end{align*}
In addition, from $\|\nabla F(\xx^{(k)}) - \bgg(\xx^{(k)})\| \leq \epsilon_{2}$, we get $\pp_{k}^{T} \nabla F(\xx^{(k)}) \leq \pp_{k}^{T} \bgg(\xx^{(k)})\| + \epsilon_{2} \|\pp_{k}\|$ and so 
\begin{align*}
\widehat{F}^{'}(0) = \alpha \pp_{k}^{T} \nabla F(\xx^{(k)}) &\leq \alpha \pp_{k}^{T} \bgg(\xx^{(k)})\| + \alpha\epsilon_{2} \|\pp_{k}\| \\
& = -\alpha \bgg(\xx^{(k)})^{T} [H(\xx^{(k)})]^{-1} \bgg(\xx^{(k)})\| + \alpha\epsilon_{2} \|\pp_{k}\|
\end{align*}
 Finally, as in the proof of Theorem~\ref{local_global_newton}, we have
\begin{align*}
\widehat{F}^{''}(0) = \alpha^{2} \pp_{k}^{T} \nabla^{2} F(\xx^{(k)}) \pp_{k}  \leq \frac{\alpha^{2}}{(1-\epsilon_{1})} \bgg(\xx^{(k)})^{T} [H(\xx^{(k)})]^{-1} \bgg(\xx^{(k)}).
\end{align*}
Hence, with $\alpha = 1$ and denoting $h(\xx) \defeq \bgg(\xx)^{T} [H(\xx)]^{-1} \bgg(\xx)$, we have
\begin{align*}
& F(\xx_{\alpha}) \leq F(\xx^{(k)}) + \left(\frac{1}{2 (1-\epsilon_{1})} -1 \right) h(\xx^{(k)}) + \frac{L}{6}\left( \frac{1}{(1-\epsilon_{1})\gamma} h(\xx^{(k)})\right)^{3/2} + \epsilon_{2} \left( \frac{1}{(1-\epsilon_{1})\gamma} h(\xx^{(k)})\right)^{1/2} \\
& \leq  F(\xx^{(k)}) + h(\xx^{(k)}) \left(\frac{1}{2 (1-\epsilon_{1})} -1 + \frac{L}{6} \left(\frac{1}{(1-\epsilon_{1})\gamma}\right)^{3/2} h(\xx^{(k)})^{1/2} + \epsilon_{2}\left(\frac{1}{(1-\epsilon_{1})\gamma}\right)^{1/2} h(\xx^{(k)})^{-1/2} \right)\\
& \leq F(\xx^{(k)}) + h(\xx^{(k)}) \left(\frac{1}{2 (1-\epsilon_{1})} -1 + \frac{L}{6 (1-\epsilon_{1})^{2}\gamma^{2}} \|\bgg(\xx^{(k)})\| + \epsilon_{2}\left(\frac{\tilde{\kappa}}{(1-\epsilon_{1})}\right)^{1/2} \|\bgg(\xx^{(k)})\|^{-1} \right),
\end{align*}
where the last inequality follows from $\|\bgg(\xx)\|^{2}/\widehat{K}_{|\mathcal{S}|} \leq h(\xx) \leq \|\bgg(\xx)\|^{2}/((1-\epsilon_{1})\gamma)$. 
Now denoting
\begin{align*}
A &\defeq \frac{L}{6 (1-\epsilon_{1})^{2}\gamma^{2}} \\
B &\defeq \frac{1}{2 (1-\epsilon_{1})} -1 + \beta \\
C &\defeq \epsilon_{2}\left(\frac{\tilde{\kappa}}{(1-\epsilon_{1})}\right)^{1/2}\\
y &\defeq \|\bgg(\xx^{(k)})\|,
\end{align*}
we require that
\begin{equation*}
A y^{2} + B y + C \leq 0.
\end{equation*}
The roots of this polynomial are
\begin{align*}
y & = \frac{-B \pm \sqrt{B^{2} - 4AC}}{2 A }\\
& = \frac{(1 - \beta - \frac{1}{2 (1-\epsilon_{1})}) \pm \sqrt{(1 - \beta - \frac{1}{2 (1-\epsilon_{1})})^{2} - \frac{4 L \epsilon_{2}}{6 (1-\epsilon_{1})^{2}\gamma^{2}} \left(\frac{\tilde{\kappa}}{(1-\epsilon_{1})}\right)^{1/2}}}{\frac{2 L}{6 (1-\epsilon_{1})^{2}\gamma^{2}}} \\
& = \frac{(1 -2\epsilon_{1} - 2 (1-\epsilon_{1}) \beta) \pm \sqrt{(1 -2\epsilon_{1} - 2 (1-\epsilon_{1}) \beta)^{2} -  \frac{8L \epsilon_{2}}{3 \gamma^{2}} \left(\frac{\tilde{\kappa}}{(1-\epsilon_{1})}\right)^{1/2}}}{\frac{2L}{3 (1-\epsilon_{1})\gamma^{2}}} \\
& = \frac{3 (1-\epsilon_{1})\gamma^{2} (1 -2\epsilon_{1} - 2 (1-\epsilon_{1}) \beta) \pm \sqrt{ 9 (1-\epsilon_{1})^{2}\gamma^{4} (1 -2\epsilon_{1} - 2 (1-\epsilon_{1}) \beta)^{2} -  9 (1-\epsilon_{1})^{3/2} \gamma^{4} \frac{8L \epsilon_{2} \sqrt{\tilde{\kappa}} }{3 \gamma^{2}} }}{2L} \\
& = \frac{3 (1-\epsilon_{1})\gamma^{2} (1 -2\epsilon_{1} - 2 (1-\epsilon_{1}) \beta) \pm \sqrt{ 9 (1-\epsilon_{1})^{2}\gamma^{4} (1 -2\epsilon_{1} - 2 (1-\epsilon_{1}) \beta)^{2} -  24 (1-\epsilon_{1})^{3/2} \gamma^{2} L \epsilon_{2} \tilde{\kappa}^{1/2}}}{2L}.
\end{align*}
Define
\begin{subequations}
\label{q1_q2}
\begin{align}
q_{1}(\epsilon_{1},\epsilon_{2},\beta,\tilde{\kappa},L)  &\defeq \frac{q - \sqrt{ q^{2} -  24 (1-\epsilon_{1})^{3/2} \gamma^{2} L \epsilon_{2} \tilde{\kappa}^{1/2}}}{2L} \label{q1},\\
q_{2}(\epsilon_{1},\epsilon_{2},\beta,\tilde{\kappa},L)  &\defeq \frac{q + \sqrt{ q^{2} -  24 (1-\epsilon_{1})^{3/2} \gamma^{2} L \epsilon_{2} \tilde{\kappa}^{1/2}}}{2L} \label{q2},
\end{align}
\end{subequations}
where $q \defeq 3 (1-\epsilon_{1})\gamma^{2} (1 -2\epsilon_{1} - 2 (1-\epsilon_{1}) \beta)$.
It is easy to see that $q_{1}(\epsilon_{1},\epsilon_{2},\beta,\tilde{\kappa},L)$ is increasing with $\epsilon_{2}$ with $q_{1}(\epsilon_{1},0,\beta,\tilde{\kappa},L) = 0$, while $q_{2}(\epsilon_{1},\epsilon_{2},\beta,\tilde{\kappa},L)$ is decreasing with $\epsilon_{2}$ with $q_{2}(\epsilon_{1},0,\beta,\tilde{\kappa},L)$ being equal to the right hand side of~\eqref{grad_small}. In order to ensure that $q_{1}$ and $q_{2}$ are real, we also need to have
\begin{equation*}
\epsilon_{2}  \leq \frac{3 \sqrt{(1-\epsilon_{1})}\gamma^{2} (1 -2\epsilon_{1} - 2 (1-\epsilon_{1}) \beta)^{2}}{8  L \sqrt{\tilde{\kappa}}}.
\end{equation*}

Now if
\begin{equation}
\label{grad_small_2}
q_{1}(\epsilon_{1},\epsilon_{2},\beta,\tilde{\kappa},L) \leq \|\bgg(\xx^{(k)})\| \leq  q_{2}(\epsilon_{1},\epsilon_{2},\beta,\tilde{\kappa},L),
\end{equation}
we get
\begin{align*}
F(\xx^{(k)} + \pp_{k}) \leq F(\xx^{(k)}) -\beta \bgg(\xx^{(k)})^{T} [H(\xx^{(k)})]^{-1} \bgg(\xx^{(k)}) = F(\xx^{(k)}) +\beta \pp_{k}^{T} \bgg(\xx^{(k)}),
\end{align*}
which implies that~\eqref{global_alpha_grad} is satisfied with $\alpha = 1$. Note that the left hand side of~\eqref{grad_small_2} is enforced by the stopping criterion of the algorithm as for any $\epsilon_{2}$, $q_{1}(\epsilon_{1},\epsilon_{2},\beta,\tilde{\kappa},L)\leq \sigma \epsilon_{2}$. The proof is complete if we can find $k$ such that both the sufficient condition of SSN2~\cite[Theorem 13]{romassn2} as well as the right hand side of~\eqref{grad_small_2} is satisfied. First note that from Theorem~\ref{global_newton_grad}, Assumption~\eqref{F_strong_gen} and by using the iteration-independent lower bound on $\alpha_{k}$, it follows that
\begin{equation*}
\|\nabla^{2}F(\xx^{(k)})\|^{2} \leq 2 K (1 - \hat{\rho})^{k} \left(F(\xx^{(0)}) - F(\xx^{*}) \right),
\end{equation*}
where
\begin{equation*}
\hat{\rho} = \frac{8 \beta  (1- \beta)(1-\epsilon_{1}) }{9 \tilde{\kappa} \kappa}.
\end{equation*}
Now, if the stopping criterion fails and the algorithm is allowed to continue, then by
\begin{equation*}
\|\bgg(\xx^{(k)})\| \leq \|\nabla F(\xx^{(k)})\| + \epsilon_{2}, 
\end{equation*}
we get
\begin{eqnarray*}
\left(\sigma - 1\right) \epsilon_{2}   \leq \|\nabla F(\xx^{(k)})\|,
\end{eqnarray*}
which implies that
\begin{equation*}
\|\bgg(\xx^{(k)})\| \leq \frac{\sigma}{\sigma-1} \|\nabla F(\xx^{(k)})\| \leq 2 \|\nabla F(\xx^{(k)})\|. 
\end{equation*}
As a result, in order to satisfy the right hand side of~\eqref{grad_small_2},  we require that 
\begin{equation*}
8 K (1 - \hat{\rho})^{k} \left(F(\xx^{(0)}) - F(\xx^{*}) \right) \leq \frac{16}{9}( \rho_{2} - (\rho_{0} + \rho_{1}) )^{2} q_{2}^{2}(\epsilon_{1},\epsilon_{2},\beta,\tilde{\kappa},L),
\end{equation*}
which yields~\eqref{local_k_grad}. Again, from Theorem~\ref{global_newton_grad} and Assumtpion~\eqref{F_strong_gen},we get
\begin{equation*}
\|\xx^{(k)} - \xx^{*}\|^{2} \leq \frac{2 (1 - \hat{\rho})^{k}}{\gamma} \left(F(\xx^{(0)}) - F(\xx^{*}) \right),
\end{equation*}
which implies that
\begin{align*}
\|\xx^{(k)} - \xx^{*}\|^{2} &\leq \frac{ 16( \rho_{2} - (\rho_{0} + \rho_{1}) )^{2} q_{2}^{2}(\epsilon_{1},\epsilon_{2},\beta,\tilde{\kappa},L),}{36 \gamma K } \\
&\leq \frac{4 ( \rho_{2} - (\rho_{0} + \rho_{1}) )^{2} (1-\epsilon_{1})^{2}\gamma^{4} (1 -2\epsilon_{1} - 2 (1-\epsilon_{1}) \beta)^{2} }{ \gamma K L^{2}} \\
& \leq \frac{4 ( \rho_{2} - (\rho_{0} + \rho_{1}) )^{2} (1-\epsilon_{1})^{2}\gamma^{2}  }{L^{2}} = c^{2},
\end{align*}
and hence the sufficient condition of SSN2~\cite[Theorem 13]{romassn2} is also satisfied and we get~\eqref{local_rate_grad}. \qed
\end{proof}

\begin{proof}[Proof of Theorem~\ref{global_newton_grad_inexact}:]
The proof is given by combining the arguments used to prove Theorems~\ref{global_newton_inexact} and~\ref{global_newton_grad}, and is given here only for completeness. We also give the proof only for the case of sampling without replacement.
The proof for sampling with replacement is obtained similarly.

As in the proof of Theorem~\ref{global_newton_grad}, we get
\begin{eqnarray*}
F(\xx_{\alpha}) - F(\xx^{(k)}) \leq \alpha \pp_{k}^{T} \bgg(\xx^{(k)}) + \epsilon_{2} \alpha\|\pp_{k}\| + \alpha^{2} \frac{K}{2} \|\pp_{k}\|^{2},
\end{eqnarray*}
and
\begin{eqnarray}
\pp_{k}^{T} \bgg(\xx^{(k)}) &\leq& -(1-\theta_{2})(1-\epsilon_{1}) \gamma \|\pp_{k}\|^{2}.
\label{armijo_loose}
\end{eqnarray}
Hence, $\pp_{k}^{T} \bgg(\xx^{(k)}) < 0$ and we can indeed obtain decrease in the objective function.
For the Armijo rule to hold, we search for $\alpha$ such that
\begin{equation*}
\epsilon_{2} \|\pp_{k}\| + \alpha \frac{K}{2} \|\pp_{k}\|^{2} \leq -(1-\beta) \pp_{k}^{T}\nabla F(\xx^{(k)}).
\end{equation*}
which follows if 
\begin{equation*}
\epsilon_{2} + \alpha \frac{K}{2} \|\pp_{k}\| \leq (1-\theta_{2})(1-\beta) (1-\epsilon_{1}) \gamma \|\pp_{k}\|,
\end{equation*}
which, in turn, is satisfied by having 
\begin{eqnarray*}
\alpha &=& \frac{(1-\theta_{2})(1- \beta)(1-\epsilon_{1})\gamma}{K}, \\
\epsilon_{2}   &=& \frac{(1-\theta_{2})(1 -\beta )(1-\epsilon_{1}) \gamma}{2} \|\pp_{k}\|.
\end{eqnarray*}
Now $\|H(\xx^{(k)})\pp_{k} + \bgg(\xx^{(k)})\| \leq \theta_{1} \|\bgg(\xx^{(k)})\|$ implies
\begin{eqnarray*}
\|\pp_{k}\| \geq \frac{(1 -\theta_{1}) \|\bgg(\xx^{(k)})\|}{\widehat{K}_{|\mathcal{S}|}},
\end{eqnarray*}
and hence, we need to have
\begin{equation*}
\epsilon_{2}   \leq \frac{(1 -\theta_{1}) (1 -\theta_{2})  (1 -\beta )(1-\epsilon_{1}) \gamma \|\bgg(\xx^{(k)})\| }{2  \widehat{K}_{|\mathcal{S}|} },
\end{equation*}
which, by the choice of $\sigma$ and $\epsilon_{1}$, is imposed by the algorithm. If the stopping criterion holds, then by 
\begin{equation*}
\|\bgg(\xx^{(k)})\| \geq \|\nabla F(\xx^{(k)})\| - \epsilon_{2},
\end{equation*}
it follows that 
\begin{equation*}
\|\nabla F(\xx^{(k)})\| < \left(1 + \sigma \right)\epsilon_{2}.
\end{equation*}
However, if the stopping criterion fails and the algorithm continues, then by
\begin{equation*}
\|\bgg(\xx^{(k)})\| \leq \|\nabla F(\xx^{(k)})\| + \epsilon_{2}, 
\end{equation*}
it follows that 
\begin{eqnarray*}
(\sigma - 1) \epsilon_{2}   \leq \|\nabla F(\xx^{(k)})\|,
\end{eqnarray*}
which, since $\sigma \geq 4$, implies that 
\begin{eqnarray*}
\frac{2}{3}\|\nabla F(\xx^{(k)})\|  \leq   \left(\frac{\sigma-2}{\sigma - 1 }\right) \|\nabla F(\xx^{(k)})\|  \leq \|\nabla F(\xx^{(k)})\| - \epsilon_{2}.
\end{eqnarray*}
For part (i), we notice that by the deﬁnition of the vector $\ell_{2}$ norm, i.e.,
\begin{equation*}
\|\vv\|_{2} = \sup \{\ww^{T} \vv; \; \|\ww\|_{2}=1\},
\end{equation*}
it follows that the condition~\eqref{global_p_inexact}, implies
\begin{equation*}
\pp_{k}^{T} \bgg(\xx^{(k)}) + \bgg(\xx^{(k)})^{T} [H(\xx^{(k)})]^{-1} \bgg(\xx^{(k)}) \leq \theta_{1} \|\bgg(\xx^{(k)})\| \|[H(\xx^{(k)})]^{-1}\bgg(\xx^{(k)})\|.
\end{equation*}
Now as in the proof of Theorem~\ref{global_newton_inexact}, we get that if
\begin{equation*}
\theta_{1} \leq \frac{\sqrt{(1-\epsilon_{1})\gamma}}{2\sqrt{\widehat{K}_{|\mathcal{S}|}}},
\end{equation*}
then
\begin{equation*}
\pp_{k}^{T} \bgg(\xx^{(k)}) \leq \frac{-1}{2 \widehat{K}_{|\mathcal{S}|}} \|\bgg(\xx^{(k)})\|^{2}.
\end{equation*}
Since $\|\nabla F(\xx^{(k)})\| - \epsilon_{2}  \leq \|\bgg(\xx^{(k)})\|$, we get
\begin{equation*}
\pp_{k}^{T} \bgg(\xx^{(k)}) \leq  - \frac{1}{2 \widehat{K}_{|\mathcal{S}|}} \left(\|\nabla F(\xx^{(k)})\| - \epsilon_{2} \right)^{2} \leq - \frac{2}{9 \widehat{K}_{|\mathcal{S}|}}  \|\nabla F(\xx^{(k)})\|^{2}.
\end{equation*}

For part (ii), we note that by $\|H(\xx^{(k)})\pp_{k} + \bgg(\xx^{(k)})\| \leq \theta_{1} \|\bgg(\xx^{(k)})\|$, we get
\begin{eqnarray*}
\|\pp_{k}\| \geq \frac{(1 -\theta_{1})}{\widehat{K}_{|\mathcal{S}|}} \left(\|\nabla F(\xx^{(k)})\| - \epsilon_{2}\right) \geq \frac{2(1 -\theta_{1})}{3\widehat{K}_{|\mathcal{S}|}} \|\nabla F(\xx^{(k)})\|.
\end{eqnarray*}
we then square both sides and use~\eqref{armijo_loose} to get
\begin{eqnarray*}
\pp_{k}^{T} \bgg(\xx^{(k)}) &\leq& -\frac{4(1 -\theta_{1})^{2}(1-\theta_{2})(1-\epsilon_{1}) \gamma}{9\widehat{K}_{|\mathcal{S}|}^{2}} \|\nabla F(\xx^{(k)})\|^{2}
\end{eqnarray*}

Now the result follows, using Assumption~\eqref{F_strong_gen}, as in the end of the proof of Theorem~\ref{global_newton}.
\qed
\end{proof}

\end{document}